\newcommand{\red}{\color{red}}
\numberwithin{equation}{section}
\mathchardef\emptyset="001F
\newtheorem{Theorem}{Theorem}[section]
\newtheorem{Definition}[Theorem]{Definition}
\newtheorem{Proposition}[Theorem]{Proposition}
\newtheorem{Corollary}[Theorem]{Corollary}
\newtheorem{Lemma}[Theorem]{Lemma}
\newcommand{\nada}[1]{}
\newcommand{\cardsing}{m}
\newcommand{\eps}{\varepsilon}
\newcommand{\grad}{\nabla}
\newcommand{\grado}{{\rm deg}}
\newcommand{\lift}{\Phi}
\newcommand{\mappa}{u}
\newcommand{\mappabuona}{v}
\newcommand{\mres}{\mathbin{\vrule height 1.6ex depth 0pt width
0.13ex\vrule height 0.13ex depth 0pt width 1.3ex}}
\newcommand{\numberset}{\mathbb}
\newcommand{\N}{\numberset{N}} 
\newcommand{\Om}{\Omega} 
\newcommand{\R}{\numberset{R}} 
\newcommand{\Rn}{\numberset{R}^n}
\newcommand{\radius}{\ell}
\newcommand{\sliceTV}{\psi}
\newcommand{\sourceballrx}{B_r(x)}
\newcommand{\Suno}{{\mathbb S}^1}
\newcommand{\totvarjac}{TV\!J}
\newcommand{\triplemap}{u_T}
\newcommand{\vortexmap}{u_V}
\newcommand{\Z}{\numberset{Z}}
\theoremstyle{definition}
\newtheorem{Remark}[Theorem]{Remark}
 \title{The relaxed area of $\Suno$-valued singular maps
in the strict $BV$-convergence 
}
\author{
Giovanni Bellettini\footnote{
Dipartimento di Ingegneria dell'Informazione e Scienze Matematiche, Universit\`a di Siena, 53100 Siena, Italy,
and International Centre for Theoretical Physics ICTP,
Mathematics Section, 34151 Trieste, Italy.
E-mail: bellettini@diism.unisi.it
                      }\and
Simone Carano\footnote{
Area of Mathematical Analysis, Modelling, and Applications,
             Scuola Internazionale Superiore di Studi Avanzati ``SISSA'',
Via Bonomea, 265 - 34136 Trieste, Italy. E-mail: scarano@sissa.it
                         }
                          \and
Riccardo Scala\footnote{
Dipartimento di Ingegneria dell'Informazione e Scienze Matematiche, Universit\`a di Siena, 53100 Siena, Italy.
E-mail: riccardo.scala@unisi.it
}
}
\begin{document}
\maketitle

\begin{abstract}
	Given a bounded open set
$\Om \subset \R^2$, 
we study the relaxation  
of the nonparametric area functional in the strict topology in 
$BV(\Om;\R^2)$, 
and compute it for vortex-type maps, and more generally 
for maps  in $W^{1,1}(\Omega;\Suno)$ having a finite 
number of topological singularities. 
We also 
extend the analysis to some specific piecewise constant maps
in $BV(\Omega;\Suno)$, including the 
symmetric triple junction map.
\end{abstract}

\noindent {\bf Key words:}~~Area functional, relaxation, Cartesian currents, 
total variation of the Jacobian, $\Suno$-valued singular maps.

\vspace{2mm}

\noindent {\bf AMS (MOS) 2020 subject clas\-si\-fi\-ca\-tion:}  49J45, 49Q05, 49Q15, 28A75.

\section{Introduction}\label{sec:introduction}
 Let $\Omega\subset \R^2$ be a bounded open set and 
$v = (v_1,v_2):\Omega\rightarrow\R^2$ be a 
map of class $ C^1(\Omega;\R^2)$.
The area functional 
$\mathcal A(v;\Omega)$ computes the $2$-dimensional Hausdorff measure 
$\mathcal H^2$
of the graph 
\begin{align}\label{graph_u}
G_v:=\{(x,y)\in \Omega\times \R^2:y=v(x)\}
\end{align}
of $v$, 
a Cartesian $2$-manifold in $\Omega\times\R^2\subset\R^{4}$, 
and is given by the classical formula\footnote{Clearly,
\eqref{classical_area} is  finite if $v\in W^{1,1}(\Om;\R^2)$ and $\det\nabla v\in L^1(\Om)$.}
\begin{align}\label{classical_area}
\mathcal A(v;\Omega)=
\int_\Omega\sqrt{1+|\nabla v_1|^2+|\nabla v_2|^2+(\det \nabla v)^2}dx,
\end{align}
where 
\begin{align}
\det \nabla v
=\frac{\partial v_1}{\partial x_1}\frac{\partial v_2}{\partial x_2}-\frac{\partial v_1}{\partial x_2}\frac{\partial v_2}{\partial x_1}
\end{align}
 is the Jacobian determinant of $v$.
As opposite to the case when the map is scalar-valued, the functional $\mathcal A(\cdot, \Omega)$ 
is not convex, but only 
polyconvex in $\nabla v$, and its growth 
is not linear, due to the presence of ${\rm det}(\grad v)$. 

An interesting problem is to try
to extend $\mathcal{A}(\cdot\,;\Omega)$ 
out of $C^1(\Om; \R^2)$: setting for convenience
$$
\mathcal{A}(v;\Omega):
=+\infty \qquad \forall v\in L^1(\Om; \R^2)
\setminus C^1(\Omega;\R^2),$$
let us consider the sequential 
lower semicontinuous envelope
\begin{equation}\label{relaxed}
\overline{\mathcal{A}}_\tau(\mappa;\Omega)
:=\inf\left\{\liminf_{k\rightarrow 
+\infty}\mathcal{A}(v_k;\Omega):(v_k)\subset C^1(\Omega;\R^2)\cap S,\;v_k\xrightarrow{\tau} \mappa\right\}
\qquad
\forall \mappa\in S
\end{equation}
of $\mathcal{A}(\cdot\,;\Omega)$
with respect to a metrizable topology 
$\tau$ on a subspace $S\subseteq L^1(\Om; \R^2)$ containing 
those $v \in C^1(\Om; \R^2)$ with $\mathcal A(v; \Om)<+\infty$, 
and choose this 
as the extended notion of area. 

A typical choice is $S = L^1(\Om; \R^2)$ and 
 $\tau$ 
the $L^1(\Omega;\R^2)$ topology, i.e.,
$\overline{\mathcal A}_{\tau} = 
\overline{\mathcal{A}}_{L^1}$, a
case in which little is 
known\footnote{
For scalar valued maps
it is known that the domain of 
$\overline{\mathcal{A}}_{L^1}(\cdot; \Omega)$
 is $BV(\Omega)$, and on $BV(\Omega)$ 
the relaxed functional can be represented as the right-hand side 
of \eqref{eq:A_bar_Luno}, 
see \cite{DalMaso:80,Giusti:84}.}.
It is not difficult to show that the domain of 
$\overline{\mathcal{A}}_{L^1}$
is properly contained
in $BV(\Omega;\R^2)$, but its characterization is not available. 
Also, one can prove that
\begin{equation}\label{eq:A_bar_Luno}
\overline{\mathcal{A}}_{L^1}(\mappa;\Omega)\geq \int_\Omega\sqrt{1+|\nabla \mappa|^2}dx+
|D^s \mappa|(\Omega), 
\end{equation}
but the inequality might be strict \cite{AD}. 
Here $\nabla \mappa$ is the approximate gradient of $\mappa$, 
$\vert \cdot \vert$ is the Frobenius norm, 
$D^s \mappa$ is the singular part of the distributional gradient $D\mappa$ 
of $\mappa$, and 
$|D^s \mappa|(\Omega)$ stands for the total variation of 
$D^s \mappa$.  
Finding
the expression of 
$\overline{\mathcal{A}}_{L^1}(\cdot\,;\Omega)$ is possible, at the moment,
only in very special cases.
This is also due to its nonlocal 
behaviour,
since for several maps $\mappa$,
the set function $U\mapsto 
\overline{\mathcal{A}}_{L^1}(\mappa;U)$ 
is not sub-additive with respect
to the open set $U\subseteq \Om$. This happens, for example,  for the symmetric triple junction map $u_T$ on an open disk $B_\ell$, as conjectured in 
\cite{DeGiorgi:92}, and proven in \cite{AD}. A complete
picture can be found in 
\cite{BP,S}, where  
$\overline{\mathcal{A}}_{L^1}(u_T;B_\radius)$ is explicitely computed,
taking advantage of the symmetry of the map and of $B_\radius$. 
We refer also to \cite{BEPS} where an upper bound inequality is proved for a 
triple junction map without symmetry assumptions.

Also for 
the vortex map $u_V:B_\radius\setminus\{0\}\rightarrow\Suno$, 
\begin{equation}\label{vortex}
u_V(x):=\frac{x}{|x|},
\end{equation}  
the above mentioned nonsubadditivity holds. 
In \cite{AD} it is proved that 
\begin{align}\label{area_vortice_L1}
\overline{\mathcal{A}}_{L^1}(u_V;B_\radius)=\int_{B_\radius}\sqrt{1+|\nabla u_V|^2}dx+\pi \qquad
{\rm if}~ 
\radius {\rm ~ is~ sufficiently~ large},
\end{align}
while 
\begin{align}\label{area_vortice_L1_small}
\overline{\mathcal{A}}_{L^1}(u_V;B_\radius)
<\int_{B_\radius}\sqrt{1+|\nabla u_V|^2}dx+\pi \qquad
{\rm if}~ 
\radius {\rm ~is~  sufficiently~ small}.
\end{align}

The explicit computation
of $\overline{\mathcal{A}}_{L^1}(u_V;B_\radius)$
for small values of $\radius$ has been done in 
 \cite{BES}, again strongly exploiting the symmetries, where it is shown that 
$\overline{\mathcal{A}}_{L^1}(u_V;B_\radius)$
is related to a Plateau-type problem in codimension $1$, 
whose solution is a sort of (half) 
catenoid constrained to contain a segment. 
This ``catenoid'' 
describes the  
vertical part of a Cartesian current 
obtained as a limit of the graphs 
of a recovery sequence. 
Specifically, 
the main result in \cite{BES} reads as 
 \begin{align}\label{value_vortex}
\overline{\mathcal{A}}_{L^1}(u_V;B_\radius)
 =\int_{B_\radius}\sqrt{1+|\nabla u_V|^2}dx+\inf \mathcal F_\varphi(h,\psi),
 \end{align}
 where the infimum is taken over all functions $h\in C^0([0,2\radius];[-1,1])$ with $h(0)=h(2\radius)=1$, and $\psi\in BV((0,2\radius)\times (-1,1))$ with $\psi=0$ on $UG_h$, 
and
\begin{equation}
\label{area_vortice_generale}
 \begin{aligned}
 \mathcal F_\varphi(h,\psi)
= &  
\int_{(0,2\radius)\times (-1,1)}
\sqrt{1 + \vert \grad \psi\vert^2}~dtds 
+ \vert D \psi\vert((0,2\radius)\times (-1,1)
)
\\
& +\int_{((0,2\radius)\times \{-1,1\})\cup (\{0,2\radius\}\times (-1,1))}|\psi-\varphi|d\mathcal H^1-|UG_h|,
 \end{aligned}
\end{equation}
 where $\varphi:\R\times [-1,1]\rightarrow\R$ is $\varphi(t,s)=\sqrt{1-s^2}$, and $UG_h$ is the region in $[0,2\radius]\times[-1,1]$ upon the graph of $h$.
 The latter functional accounts for a Plateau problem in non-parametric form with partial free boundary on a plane domain (see also \cite{BMS} 
for more details).
 If $\radius$ is 
large enough, a minimizer of $\mathcal F_\varphi$ has the shape of two half-disks of radius $1$, whose total area is $\pi$, recovering the result in \eqref{area_vortice_L1}.
 
 \medskip
 
The $L^1$-topology is rather weak, 
and so it is convenient in order to show compactness results,
in the effort of proving existence of minimizers of some possible 
weak formulation
of the two-codimensional Cartesian Plateau problem. However,
the above discussion
 illustrates the 
difficulties of the study of the corresponding relaxation problem. 
Besides all nonlocality phenomena, 
the  $L^1$ convergence
 does not provide any control on the derivatives of $v$ and, 
of course, neither on the Jacobian determinant.
The aim of the present paper is to study the
relaxation of the area in $S = BV(\Om; \R^2)$ in a different topology, 
stronger
than the $L^1$-topology, 
in order to avoid nonlocality and keep some control of the
gradient terms.   
Specifically, we will take as $\tau$ in \eqref{relaxed} 
the topology induced by the 
strict convergence in $BV(\Omega;\R^2)$. This notion of convergence, weaker than the strong $W^{1,1}$ topology, and in general not related with the weak $W^{1,1}$ topology (see Remark \ref{rem_2.3}), allows to consider relaxation in \eqref{relaxed} for all $BV$-maps.
We recall that $(v_k)$ 
converges to $\mappa$ strictly 
$BV(\Omega;\R^2)$ if $v_k\rightarrow \mappa$ in $L^1(\Omega;\R^2)$ 
and $|Dv_k|(\Omega)\rightarrow |D\mappa|(\Omega)$ (see  Section 
\ref{sec_SC} for details). We are therefore led
to consider, for all $\mappa\in BV(\Omega;\R^2)$,
 the corresponding 
relaxed area functional $\overline{\mathcal{A}}_{\tau}=
\overline{\mathcal{A}}_{BV}$, 
\begin{equation}\label{relaxed BV}
\overline{\mathcal{A}}_{BV}
(u;\Omega):=\inf\left\{\liminf_{k\rightarrow+\infty}
\mathcal{A}(v_k;\Omega): (v_k)\subset C^1(\Omega;\R^2)\cap BV(\Omega;\R^2),{v_k\rightarrow \mappa}\mbox{ strictly } BV(\Omega;\R^2)\right\}.
\end{equation}
In the first part of the paper we restrict our analysis 
to maps $w:B_\radius \setminus \{0\}\rightarrow 
\Suno=\{x\in\R^2:|x|=1\}$ of the form
\begin{equation}\label{def_w}
w(x)=\varphi(u_V(x))=\varphi\left(\frac{x}{|x|}\right),
\end{equation}
with $\varphi:\Suno \rightarrow \Suno$ Lipschitz 
continuous.
The vortex map corresponds to the case $\varphi=\mathrm{id}$.

After setting some notation and preliminaries in Section \ref{sec:preliminaries},
in particular the total variation of the Jacobian,
the Jacobian distributional determinant ${\rm Det} \grad u$ (Section \ref{sec:the_Jacobian_determinant_and_its_total_variation}), 
and the degree (Section \ref{sec:multiplicity_and_degree}), 
in Section \ref{sec_varphi} we prove the following result:

\begin{Theorem}\label{main result 2}
Let $\radius >0$,
and $w:B_\radius \setminus \{0\}\rightarrow
\Suno$ be as in \eqref{def_w}. 
Then
	\begin{equation}
	\overline{\mathcal{A}}_{BV}(w;B_\radius)=\int_{B_\radius}\sqrt{1+|\grad w|^2}dx + \pi|\grado(\varphi)|.
	\end{equation}
In particular,
	\begin{equation}\label{main result}
	\overline{\mathcal{A}}_{BV}(\vortexmap;B_\radius)=\int_{B_\radius}\sqrt{1+|\grad u_V|^2}dx + \pi.
	\end{equation}
\end{Theorem}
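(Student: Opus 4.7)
I will prove Theorem~\ref{main result 2} by establishing matching upper and lower bounds; throughout, write $d := \grado(\varphi)$.

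\textbf{Upper bound.} I will construct a recovery sequence $(v_k) \subset C^1(B_\radius;\R^2)\cap BV(B_\radius;\R^2)$ converging strictly in $BV$ to $w$. The idea is to first homotope $\varphi$ to its monotone representative $\tilde{\varphi}(e^{i\theta}):=e^{id\theta}$ of the same degree via a Lipschitz homotopy $H:[0,1]\times\Suno\to\Suno$. For $\eta_k\downarrow 0$, I set $v_k=w$ on $B_\radius\setminus B_{\eta_k}$, interpolate from $\varphi$ to $\tilde{\varphi}$ on the annulus $\eta_k/2\le|x|\le\eta_k$ via $H$, and on $B_{\eta_k/2}$ define the radial cap
\begin{equation*}
v_k(x) := \frac{2|x|}{\eta_k}\,\tilde{\varphi}\!\left(\frac{x}{|x|}\right),
\end{equation*}
mollified slightly to restore $C^1$-regularity (this modification affects the area only by $o(1)$). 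On the annular transition $v_k\in\Suno$, so $\det\grad v_k=0$, and $|\grad v_k|=O(1/\eta_k)$ over area $O(\eta_k^2)$, so the area contribution is $O(\eta_k)$. On $B_{\eta_k/2}$ a polar-coordinate computation gives $|\grad v_k|^2=4(1+d^2)/\eta_k^2$ and $\det\grad v_k=4d/\eta_k^2$, whence $\int_{B_{\eta_k/2}}\sqrt{1+|\grad v_k|^2+(\det\grad v_k)^2}\,dx\to\pi|d|$. On $B_\radius\setminus B_{\eta_k}$ the integrand is $\sqrt{1+|\grad w|^2}$, whose integral tends to $\int_{B_\radius}\sqrt{1+|\grad w|^2}\,dx$ by dominated convergence. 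Strict $BV$-convergence follows from $|Dv_k|(B_{\eta_k})=O(\eta_k)\to 0$.

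\textbf{Lower bound.} For any $(v_k)\subset C^1\cap BV$ with $v_k\to w$ strictly in $BV$ and any $\epsilon\in(0,\radius)$, the pointwise inequalities $\sqrt{1+|\grad v|^2+(\det\grad v)^2}\ge\sqrt{1+|\grad v|^2}$ on $B_\radius\setminus B_\epsilon$ and $\ge|\det\grad v|$ on $B_\epsilon$ yield
\begin{equation*}
\mathcal{A}(v_k;B_\radius)\ \ge\ \int_{B_\radius\setminus B_\epsilon}\sqrt{1+|\grad v_k|^2}\,dx+\int_{B_\epsilon}|\det\grad v_k|\,dx.
\end{equation*}
For the first summand, lower semicontinuity of the convex integrand $\sqrt{1+|\cdot|^2}$ under strict $BV$-convergence with $W^{1,1}$-limit (a Reshetnyak-type argument) gives $\liminf_k\int_{B_\radius\setminus B_\epsilon}\sqrt{1+|\grad v_k|^2}\,dx\ge\int_{B_\radius\setminus B_\epsilon}\sqrt{1+|\grad w|^2}\,dx$. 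For the second, I pick $\phi\in C_c^\infty(B_\epsilon;[0,1])$ with $\phi\equiv 1$ near $0$, so that $\int_{B_\epsilon}|\det\grad v_k|\,dx\ge\bigl|\int_{B_\epsilon}\phi\,\det\grad v_k\,dx\bigr|$. The distributional Jacobian satisfies ${\rm Det}\,\grad w=\pi d\,\delta_0$ (Section~\ref{sec:the_Jacobian_determinant_and_its_total_variation}), and strict $BV$-convergence forces ${\rm Det}\,\grad v_k\rightharpoonup{\rm Det}\,\grad w$ in $\mathcal{D}'(B_\radius)$, so the right-hand side tends to $|\pi d\,\phi(0)|=\pi|d|$. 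Letting $\epsilon\downarrow 0$ completes the lower bound.

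\textbf{Main obstacle.} The crux is the distributional convergence ${\rm Det}\,\grad v_k\rightharpoonup\pi d\,\delta_0$. Since strict $BV$-convergence does not yield weak $W^{1,1}$- or any $L^p$-control ($p>1$) of the gradients (cf.\ Remark~\ref{rem_2.3}), one cannot naively pass to the limit in the product $v_{k,1}\grad v_{k,2}-v_{k,2}\grad v_{k,1}$ that expresses $\det\grad v_k$ as a divergence. My plan is to exploit the $L^\infty$-bound inherited from $|w|=1$ (preserved by truncating $v_k$), and then invoke Reshetnyak's strict-continuity theorem for the gradient measures combined with the total variation of the Jacobian framework from Section~\ref{sec:the_Jacobian_determinant_and_its_total_variation}. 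This is where the preliminaries on ${\rm Det}\,\grad u$ and the degree developed in Section~\ref{sec:preliminaries} become indispensable.
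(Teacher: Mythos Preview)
Your upper bound construction is essentially the same as the paper's (Proposition~\ref{upper bound2}): homotope $\varphi$ to the monotone representative of the same degree on a thin annulus, cap off radially inside, keep $w$ outside. The area and strict-$BV$ computations you sketch are correct.

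The lower bound, however, contains a genuine gap. Your argument hinges on the assertion that strict $BV$-convergence forces $\mathrm{Det}\,\nabla v_k\rightharpoonup\mathrm{Det}\,\nabla w=\pi d\,\delta_0$ in $\mathcal D'$. You acknowledge this as the main obstacle, and propose to close it by combining the $L^\infty$-bound (after truncation) with Reshetnyak's theorem. But Reshetnyak (Theorem~\ref{Reshetnyak}) only lets you pass to the limit in integrals of the form $\int g\bigl(x,\tfrac{\nabla v_k}{|\nabla v_k|}\bigr)\,|\nabla v_k|\,dx$ with $g$ continuous and bounded in $(x,\sigma)$. The distributional Jacobian pairing, after integration by parts, involves $\int \bigl(v_{k,1}\nabla v_{k,2}-v_{k,2}\nabla v_{k,1}\bigr)\cdot\nabla\phi\,dx$, whose integrand depends on $v_k$ itself, not merely on $x$ and the direction of $\nabla v_k$. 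To replace $v_k$ by $w$ you would need $\int|v_k-w|\,|\nabla v_k|\,dx\to 0$; this does \emph{not} follow from $L^1$-convergence of $v_k$ together with an $L^1$-bound on $\nabla v_k$, because strict $BV$-convergence does not force equi-integrability of $(\nabla v_k)$---precisely the phenomenon illustrated in Remark~\ref{rem_2.3}. So the ``Reshetnyak plus $\totvarjac$ framework'' you invoke does not, as stated, deliver the needed product convergence.

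The paper circumvents this by a genuinely different route (Proposition~\ref{lower bound2}). First, a slicing argument combined with the strict convergence and Fatou's lemma shows that for almost every radius $r$ one can extract a subsequence for which $v_{k_h}\!\mres\!\partial B_r\to w\!\mres\!\partial B_r$ \emph{strictly} in $BV(\partial B_r;\R^2)$; this step uses the special radial structure of $w$ (the tangential variation of $w$ on every circle equals $\mathrm{Var}(\varphi)$). Then Proposition~\ref{strict implies uniform}---the one-dimensional fact that strict $BV$-convergence implies uniform convergence---upgrades this to $v_{k_h}\to w$ uniformly on $\partial B_{r_\varepsilon}$. With uniform convergence on a circle in hand, the paper builds an auxiliary Lipschitz map $w_h$ that agrees with $v_{k_h}$ inside $B_{r_\varepsilon}$ and interpolates to $w$ outside; the uniform convergence is exactly what makes $\int_{B_\ell\setminus B_{r_\varepsilon}}|Jw_h|\to 0$, after which the degree estimate \eqref{stima_jacob_grado} gives $\int_{B_\ell}|Jw_h|\ge\pi|\grado(\varphi)|$. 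Proposition~\ref{strict implies uniform} is the key lemma that replaces your unjustified distributional-Jacobian convergence; without it, or an equivalent device, the lower bound does not go through.
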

By \eqref{area_vortice_L1}, for $\radius$ 
large enough we find 
$\overline{\mathcal{A}}_{BV}(\vortexmap;B_\radius)
=\overline{\mathcal{A}}_{L^1}(\vortexmap;B_\radius)$ 
while by \eqref{area_vortice_L1_small}, for small values of $\radius$ we have
$\overline{\mathcal{A}}_{BV}(\vortexmap;B_\radius)>
\overline{\mathcal{A}}_{L^1}(\vortexmap;B_\radius)$.
We also remark that 
for any radius $\radius$, 
in the computation of $\overline{\mathcal A}_{BV}(\vortexmap; B_\radius)$, 
the minimal surface employed to fill the holes of the graph $\mathcal G_{\vortexmap}\subset\R^4 $ of $\vortexmap$
 is a two dimensional disc living upon the origin of $\R^2$.

In Section \ref{sec_general} we extend our analysis to 
a more general class of maps $u\in W^{1,1}(\Omega;\Suno)$. To state our result, we recall that when
$\vert \textrm{Det}\nabla u\vert (\Om)<+\infty$, then 
$\textrm{Det}\nabla u$ 
can be written as 
$$\textrm{Det}\nabla u=\pi\sum_{i=1}^m d_i\delta_{x_i},$$
where the points $x_i \in \Om$ are the 
topological singularities of $u$, 
around which the degree of $u$ is nontrivial and equals $d_i\in \mathbb Z\setminus \{0\}$
(see Theorem \ref{Brezis thm}).
We then prove the following:

\begin{Theorem}\label{relaxed area BV}
	Let $u\in W^{1,1}(\Omega;\Suno)$. Suppose that 
$\textrm{Det}\nabla u$ 
is a Radon measure with
finite total variation
$\vert \textrm{Det}\nabla u\vert(\Om)$.  Then
	\begin{equation}\label{BV result}
	\overline{\mathcal{A}}_{BV}(u;\Omega)=
\int_{\Omega}\sqrt{1+|\grad u|^2}dx
+ \vert {\rm Det} \grad u\vert(\Om) =
 \int_{\Omega}\sqrt{1+|\grad u|^2}dx+
\pi\sum_{i=1}^N|d_i|,
	\end{equation}
	where $N\in\N$ and $d_1,\dots, d_N\in \mathbb Z\setminus \{0\}$ are such that $\textrm{Det}\nabla u=\pi\sum_{i=1}^N d_i\delta_{x_i}$.
\end{Theorem}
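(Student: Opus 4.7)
I prove the two inequalities
\[
\int_\Om\sqrt{1+|\grad u|^2}\,dx+\pi\sum_{i=1}^N|d_i|\ \leq\ \overline{\mathcal{A}}_{BV}(u;\Om)\ \leq\ \int_\Om\sqrt{1+|\grad u|^2}\,dx+\pi\sum_{i=1}^N|d_i|
\]
separately. The lower bound rests on a Cartesian currents argument combined with the lower semicontinuity properties of the distributional Jacobian under strict $BV$-convergence developed in Section \ref{sec:the_Jacobian_determinant_and_its_total_variation}. The upper bound is obtained by isolating each topological singularity $x_i$ and invoking Theorem \ref{main result 2} on a small disk around it.

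\textbf{Lower bound.} Take $(v_k)\subset C^1(\Om;\R^2)\cap BV(\Om;\R^2)$ with $v_k\to u$ strictly in $BV$ and $\liminf_k \mathcal{A}(v_k;\Om)<+\infty$. Identify $\mathcal{A}(v_k;\Om)=M(G_{v_k})$, where $G_{v_k}$ is the integer multiplicity graph current in $\Om\times\R^2$; a subsequence converges weakly to a $2$-current $T$ with $M(T)\leq\liminf_k\mathcal{A}(v_k;\Om)$. The structure theorem for Cartesian currents then yields $T=G_u+V$ with $V=\sum_{i=1}^N\delta_{x_i}\times T_i$, where $G_u$ is the graph current built from the approximate gradient and contributes $M(G_u)=\int_\Om\sqrt{1+|\grad u|^2}\,dx$ (because $\det\grad u=0$ a.e.). Since $\partial T=0$ and $\partial G_u$ is concentrated on $\bigcup_i\{x_i\}\times\Suno$ with multiplicity $d_i$ (this being the content of $\textrm{Det}\,\grad u=\pi\sum_i d_i\delta_{x_i}$), each $T_i$ is a $2$-current in $\R^2$ whose boundary is a degree-$d_i$ multiple of $\Suno$; the elementary Plateau bound $M(T_i)\geq\pi|d_i|$ for such fillings in a plane, together with summation over $i$, yields the lower bound.

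\textbf{Upper bound and main obstacle.} For the upper bound, fix disjoint disks $B_\rho(x_i)\Subset\Om$. By the lifting theory for $\Suno$-valued Sobolev maps, on $B_\rho(x_i)\setminus\{x_i\}$ one can write $u=\varphi_i\cdot e^{i\theta_i}$, with $\varphi_i(x):=\bigl((x-x_i)/|x-x_i|\bigr)^{d_i}$ and $\theta_i\in W^{1,1}(B_\rho(x_i);\R)$; the factor $\varphi_i$ has the form \eqref{def_w} with $\grado(\varphi_i)=d_i$, so Theorem \ref{main result 2} supplies a recovery sequence $(w_k^i)$ on $B_\rho(x_i)$. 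Multiplying $w_k^i$ by a smooth approximation of $e^{i\theta_i}$ and glueing via a partition of unity to a standard mollification of $u$ on $\Om\setminus\bigcup_i B_\rho(x_i)$, then extracting a diagonal sequence in $(k,\rho)$, one obtains a sequence whose areas tend to $\int_\Om\sqrt{1+|\grad u|^2}\,dx+\pi\sum_i|d_i|$. The main obstacle is to carry out the glueing while preserving strict $BV$-convergence (not merely $L^1$-convergence): the recovery sequences from Theorem \ref{main result 2} concentrate large gradient mass near each $x_i$, so matching boundary traces on $\partial B_\rho(x_i)$ in such a way that $|Dv_k|(\Om)\to|Du|(\Om)$ without spurious variation on the interfaces requires a careful calibration of transition annuli and explicit control of the behaviour of $(w_k^i)$ near $\partial B_\rho(x_i)$.
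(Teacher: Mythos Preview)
Your lower bound has a genuine gap. You invoke a ``structure theorem for Cartesian currents'' to write the limit current as $T=G_u+V$ with $V=\sum_i\delta_{x_i}\times T_i$, i.e.\ a vertical part concentrated \emph{only} over the topological singularities of $u$. No such structure theorem is available for weak limits of graphs of $C^1$ maps converging merely in $L^1$ (or even strictly $BV$) to a $W^{1,1}(\Om;\Suno)$ map; and in fact your argument, as written, makes no use whatsoever of the strict $BV$ hypothesis. If it were correct it would prove the same lower bound for $\overline{\mathcal A}_{L^1}$, contradicting \eqref{area_vortice_L1_small} and \eqref{value_vortex}, where for the vortex on a small disk the limit current develops a catenoid-type vertical part over a segment and the relaxed area is strictly below $\int_{B_\ell}\sqrt{1+|\nabla u_V|^2}\,dx+\pi$. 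The whole point is that strict $BV$ convergence is exactly what forbids such spreading of vertical mass, and proving this is the content of the lower bound. The paper does \emph{not} use currents: it proves an inheritance lemma (Lemma \ref{tangential strict convergence}) showing that strict $BV$ convergence on $\Om$ passes, along a subsequence, to strict $BV$ convergence on almost every circle $\partial B_r(x_i)$; then Proposition \ref{strict implies uniform} upgrades this to uniform convergence on $\partial B_r(x_i)$, which allows an explicit Lipschitz interpolation $w_h$ between $v_{k_h}$ and a smooth $\Suno$-valued approximant of $u$ on an annulus, with vanishing Jacobian contribution on the annulus, and the degree bound \eqref{stima_jacob_grado} on the full ball.

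Your upper bound outline is in the right spirit but also diverges from the paper and leaves the obstacle you yourself flag unresolved. Multiplying the recovery sequence $w_k^i$ from Theorem \ref{main result 2} (which takes values in $\overline B_1$, not $\Suno$) by $e^{i\theta_i}$ and glueing via a partition of unity will typically create uncontrolled Jacobian on the transition annuli and destroy the variation balance needed for strict convergence. The paper avoids this entirely: by Theorem \ref{approssimanti lisce}, since $\mathrm{Det}\nabla u=0$ on $\Om\setminus\{x_1,\dots,x_m\}$, there are smooth $\Suno$-valued approximants $u_k$ of $u$ on the complement of shrinking balls; inside each ball one glues, via a Lipschitz $\Suno$-valued homotopy between $u_k\mres\partial B_{r_k}(x_i)$ and the standard degree-$d_i$ map, down to a cone of the latter. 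Because every piece except the innermost cone is $\Suno$-valued, the Jacobian is identically zero there, and the cone contributes exactly $\pi|d_i|$. This construction converges to $u$ in $W^{1,1}$ (hence strictly $BV$), so no delicate calibration of interfaces is needed.
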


The total variation of $\textrm{Det}\nabla u$ can be 
characterized by relaxation.
More precisely, for maps $u \in W^{1,2}_{\textrm{loc}}(\Om; \R^2)$,
 we introduce the functional
$\totvarjac(\mappabuona;\Om):=\int_\Om|{\rm det} \grad \mappabuona|dx$, 
 measuring the total 
variation of the Jacobian of $v$, 
and consider
$$\totvarjac
_{W^{1,1}}(u;\Om):=\inf\left\{\liminf_{k\rightarrow+\infty}
\totvarjac
(\mappabuona_k;\Om):
(\mappabuona_k)\subset C^1(\Om;\R^2)\cap W^{1,1}(\Om;\R^2),\; 
\mappabuona_k\rightarrow \mappa \text{ in }W^{1,1}(\Om;\R^2)\right\},$$
for all $u\in W^{1,1}(\Om;\R^2)$. It is known (see Theorem \ref{Brezis thm})
that for 
$u$ as in Theorem \ref{relaxed area BV}, 
$$\totvarjac_{W^{1,1}}(u;\Om)=|\textrm{Det}\nabla u|(\Om).
$$
In Theorem \ref{teo:relaxation_of_TV_in_the_strict_convergence}
we show that 
$$\totvarjac_{W^{1,1}}(u;\Om)=\totvarjac_{BV}(u;\Om),$$
where
$$ 
\totvarjac_{BV}(u;\Om):=\inf\left\{\liminf_{k\rightarrow+\infty}
\totvarjac
(\mappabuona_k;\Om):
(\mappabuona_k)\subset C^1(\Om;\R^2)\cap BV(\Om;\R^2),\; \mappabuona_k\rightarrow u \text{ strictly }BV(\Om;\R^2)\right\}.
$$
Eventually, in Section \ref{sec_triple pt} we 
consider some piecewise constant maps valued in $\Suno$, 
in particular the symmetric triple junction map (see Section  \ref{sec_triple pt} for the precise definition). 
If we  call $T_{\alpha\beta\gamma}$ the equilateral 
triangle with vertices $\alpha,\beta,\gamma \in \Suno$ and $L
:= \vert \beta-\alpha\vert$ its side length, then we have:
\begin{Theorem}\label{teo:symmetric_triple-point_map}
Let $\triplemap:
B_{\radius}:=B_{\radius}(0)
\rightarrow \{\alpha,\beta,\gamma\}$ be the symmetric 
triple-point map. Then 
\begin{equation}\label{eq:relaxed_triple-point}
\overline{\mathcal{A}}_{BV}(\triplemap;B_\radius)
=\vert B_\radius\vert+ L \mathcal H^1(J_{\triplemap}) + 
\vert T_{\alpha \beta\gamma}\vert,
\end{equation}
where $\vert \cdot \vert$ is the Lebesgue measure and $J_{\triplemap}$ is 
the jump set  
of $\triplemap$.
\end{Theorem}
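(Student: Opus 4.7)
I would prove Theorem~\ref{teo:symmetric_triple-point_map} by establishing matching upper and lower bounds, using a partition of $B_\radius$ into a ``regular'' part, a narrow tube around the jump set, and a small disk around the origin; these three pieces correspond respectively to the three summands $|B_\radius|$, $\latus\,\mathcal H^1(J_{\triplemap})$, and $|T_{\alpha\beta\gamma}|$ in \eqref{eq:relaxed_triple-point}.

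\emph{Upper bound.} I construct $(\mappabuona_k)\subset C^1(B_\radius;\R^2)$ converging strictly to $\triplemap$ in $BV$ with $\mathcal{A}(\mappabuona_k;B_\radius)\to |B_\radius|+\latus\,\mathcal H^1(J_{\triplemap})+|T_{\alpha\beta\gamma}|$. Let $\delta_k\gg \eps_k\to 0^+$ and write $T_k:=\{x\in B_\radius:\mathrm{dist}(x,J_{\triplemap})<\delta_k\}$. On $B_\radius\setminus(B_{\eps_k}(0)\cup T_k)$ I set $\mappabuona_k=\triplemap$ (smoothly mollified), contributing $|B_\radius|+o(1)$ to the area. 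On each connected component of $T_k\setminus B_{\eps_k}(0)$ I interpolate affinely in $\R^2$ between the two constant sector values transversally to the jump; because the image of each strip is a line segment of length $\latus$, one has $\det\grad \mappabuona_k\equiv 0$ there and the graph-area contribution converges to $\latus\,\mathcal H^1(J_{\triplemap})$. Finally, on $B_{\eps_k}(0)$ I set $\mappabuona_k(x)=\phi(x/\eps_k)$ for a fixed $C^1$ diffeomorphism $\phi:B_1(0)\to T_{\alpha\beta\gamma}^\circ$ with boundary values matching the strips; the change of variables $y=x/\eps_k$ gives
\[
\mathcal{A}(\mappabuona_k;B_{\eps_k})=\eps_k^{-2}\!\int_{B_1(0)}\!\sqrt{\eps_k^4+\eps_k^2|\grad\phi(y)|^2+|\det\grad\phi(y)|^2}\,dy\;\longrightarrow\;\int_{B_1(0)}|\det\grad\phi|\,dy=|T_{\alpha\beta\gamma}|.
\]
After smoothly gluing the three pieces, $\mappabuona_k\to \triplemap$ in $L^1$ is immediate, while $|D\mappabuona_k|(B_\radius)\to \latus\,\mathcal H^1(J_{\triplemap})=|D\triplemap|(B_\radius)$ because the contribution of $B_{\eps_k}$ to $|D\mappabuona_k|$ is $O(\eps_k)$.

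\emph{Lower bound.} For any strict-$BV$ approximating sequence $(\mappabuona_k)$ and $\delta\ll \eps\ll \radius$, I split $B_\radius$ into three disjoint open sets: $\omega_\eps:=B_\eps(0)$, $\omega_\delta:=\{\mathrm{dist}(\cdot,J_{\triplemap})<\delta\}\setminus \overline{B_\eps(0)}$, and $\omega_{\rm reg}:=B_\radius\setminus \overline{\omega_\eps\cup\omega_\delta}$, choosing $\eps,\delta$ so that $|D\triplemap|$ vanishes on their boundaries. On $\omega_{\rm reg}$, where $\triplemap$ is locally constant, \eqref{eq:A_bar_Luno} gives $\liminf_k\mathcal{A}(\mappabuona_k;\omega_{\rm reg})\geq |\omega_{\rm reg}|$. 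On $\omega_\delta$, the pointwise inequality $\sqrt{1+|\grad \mappabuona_k|^2+(\det\grad \mappabuona_k)^2}\geq \max\{1,|\grad \mappabuona_k|\}$ combined with the lower semicontinuity of the total variation on open sets under strict-$BV$ convergence yields $\liminf_k \mathcal{A}(\mappabuona_k;\omega_\delta)\geq |\omega_\delta|+\latus\,\mathcal H^1(J_{\triplemap}\cap\omega_\delta)$. On $\omega_\eps$ the integrand dominates $|\det\grad \mappabuona_k|$, so $\mathcal{A}(\mappabuona_k;\omega_\eps)\geq \int_{B_\eps}|\det\grad \mappabuona_k|\,dx$. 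Passing to $\liminf_k$, then $\delta\to 0^+$ and $\eps\to 0^+$, the lower bound reduces to showing $\liminf_{\eps\to 0^+}\liminf_{k} \int_{B_\eps}|\det\grad \mappabuona_k|\,dx\geq |T_{\alpha\beta\gamma}|$.

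\emph{Main obstacle.} This residual Jacobian-type bound is the crux, since no $W^{1,2}$-control on $\mappabuona_k$ is available. My plan is to slice: by Fubini and the strict-$BV$ identity $|D\mappabuona_k|(B_\eps)\to 3\latus\eps$, for each $k$ one can extract a radius $\rho_k\in (\eps/2,\eps)$ on which $\mappabuona_k|_{\partial B_{\rho_k}}$ is absolutely continuous with $W^{1,1}$-norm bounded uniformly in $k$; for a.e. such $\rho_k$ one also has $\mappabuona_k|_{\partial B_{\rho_k}}\to \triplemap|_{\partial B_{\rho_k}}$ in $L^1$, and the target trace takes only the three values $\alpha,\beta,\gamma$. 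Completing $\mappabuona_k|_{\partial B_{\rho_k}}$ into a closed continuous curve $\gamma_k\subset \R^2$ by inserting short chords at its three essential discontinuities produces a curve Hausdorff-close to $\partial T_{\alpha\beta\gamma}$; in particular $\gamma_k$ has winding number $\pm 1$ around every point of $T_{\alpha\beta\gamma}^\circ$. The area/degree formula
\[
\int_{B_{\rho_k}}|\det\grad \mappabuona_k|\,dx\;\geq\;\int_{\R^2}|\mathrm{deg}(\gamma_k,y)|\,dy\;\geq\;|T_{\alpha\beta\gamma}|
\]
then closes the argument. Making this degree comparison rigorous for $BV$ (rather than smooth) boundary data is the delicate point, and is precisely where the distributional Jacobian and the degree theory of $\Suno$-valued maps developed in Section~\ref{sec:the_Jacobian_determinant_and_its_total_variation} would be invoked.
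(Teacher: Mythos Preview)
Your global strategy---split into upper and lower bounds, localize the singular contribution at the origin, identify it with $|T_{\alpha\beta\gamma}|$ via a Jacobian/degree argument---matches the paper's. Two points deserve correction, however.

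In the upper bound your scale relation $\delta_k\gg\eps_k$ is backwards: if the tube is wider than the central disc, the three tubes overlap around $0$ and the trace on $\partial B_{\eps_k}$ does not come from ``a fixed $C^1$ diffeomorphism $\phi$''. The paper avoids this by not using a disc at all: it builds ``tent'' functions $m^\eps$ along each strip (nonzero only on a length-$\eps$ portion near the centre), so that the triangle area is produced by the $\partial_t m^\eps$ term; the central region is a tiny triangle handled by affine pieces with vanishing contribution. Your idea is also viable, but with $\eps_k\gg\delta_k$ and a $k$-dependent boundary matching.

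The substantive gap is in the Jacobian lower bound. First, your $\rho_k$ depends on $k$, so ``$v_k|_{\partial B_{\rho_k}}\to u|_{\partial B_{\rho_k}}$ in $L^1$'' is ill-posed. The paper instead proves an \emph{inheritance} lemma (Lemma~\ref{strict convergence on circumference for triple point}): for a.e.\ fixed $\rho$ there is a subsequence with $v_{k_h}|_{\partial B_\rho}\to u|_{\partial B_\rho}$ \emph{strictly} in $BV(\partial B_\rho;\R^2)$. Second, your phrase ``inserting short chords at its three essential discontinuities'' is misplaced: $v_k|_{\partial B_\rho}$ is already a closed continuous curve; it is the \emph{limit} $u|_{\partial B_\rho}$ that is discontinuous, so uniform convergence on the full circle is impossible and ``Hausdorff closeness to $\partial T_{\alpha\beta\gamma}$'' cannot be obtained that way. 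The paper's device is to use the boundary formula $\int_{B_\rho}Jv_k=\tfrac12\int_{\partial B_\rho}((v_k)_1\partial_s(v_k)_2-(v_k)_2\partial_s(v_k)_1)\,ds$, split $\partial B_\rho$ at the midpoints $\theta_1,\theta_2,\theta_3$ of the three arcs, apply Proposition~\ref{strict implies uniform} on sub-arcs \emph{where $u$ is constant} to get $v_k(\theta_i)\to\gamma,\alpha,\beta$, and then reparametrize each piece by arc length: the reparametrized curves are uniformly $1$-Lipschitz on an interval of length $L_k\to L$, so a weak-$*$ limit in $W^{1,\infty}$ is a $1$-Lipschitz curve on $[0,L]$ joining two vertices, i.e.\ the corresponding edge of $T_{\alpha\beta\gamma}$; the signed-area integral then converges to $|T_{\alpha\beta\gamma}|$. (Your side claim $\liminf_k\mathcal A(v_k;\omega_\delta)\ge|\omega_\delta|+L\,\mathcal H^1(J_{\triplemap}\cap\omega_\delta)$ does not follow from $\sqrt{1+a^2+b^2}\ge\max\{1,a\}$; the paper instead invokes the known $L^1$-relaxation for piecewise constant maps without triple points on the whole annulus $B_\radius\setminus B_\rho$, which gives $|B_\radius\setminus B_\rho|+3L(\radius-\rho)$ directly.)
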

In particular, in view of the 
results in \cite{AD}, \cite{BP}, we find
$\overline{\mathcal{A}}_{BV}(\triplemap;B_\radius) > 
\overline{\mathcal{A}}_{L^1}(\triplemap;B_\radius)$. 
We will also see that the same argument 
used to prove Theorem \ref{teo:symmetric_triple-point_map}
provides a proof also for a symmetric $n$-uple junction function, as 
expressed in Corollary \ref{n point area}.
 
As opposite to $\overline {\mathcal A}_{L^1}(u; \Om)$, 
we see that 
the functional $\overline{\mathcal A}_{BV}(u; \Om)$, 
at least for the maps $u$ taking values in $\Suno$
considered here, is local, and admits an 
integral representation. 
 
We conclude this introduction by pointing out that, at the 
present stage, we miss the generalization of our results 
in higher dimension or codimension. On the one hand the strict  convergence in $BV$ provides some control on the gradient of $u$, and consequently, on the distributional determinant. 
In the case of maps $u:\Om\subset\R^3\rightarrow\R^3$,
for instance,
this notion of convergence might be useful to get some control of the $2\times2$-subdeterminants of $\nabla u$, but seems too weak to control  the higher order minors. On the other hand, even in the case of maps $u:\Om\subset\R^3\rightarrow\R^2$, the strict  convergence in $BV$ is not 
sufficient to show the counterpart of Proposition \ref{strict implies uniform} (see Remark \ref{rem1}) which, in our arguments, 
is crucial to localize the concentrations of $|\det\nabla v_k|$ (where $(v_k)$ is
a sequence converging to $u$).

\section{Preliminaries}\label{sec:preliminaries}
In this section we collect some preliminaries. For an integer $M \geq 2$,
set $\mathbb S^{M-1} := \{x \in \R^M : \vert x\vert =1\}$.

\begin{Theorem}[\textbf{Reshetnyak}]\label{Reshetnyak}
	Let $\Omega\subseteq \Rn$ be an open set and $\mu_h,\mu$ 
be (finite) Radon measures valued in $\R^M$. Suppose that $\mu_h\stackrel{*}{\rightharpoonup}\mu$ and $|\mu_h|(\Omega)\rightarrow|\mu|(\Omega)$. Then 
	$$
	\lim_{h\rightarrow+\infty}\int_{\Omega}f\left(x,\frac{\mu_h}{|\mu_h|}(x)\right)d|\mu_h|(x)=\int_{\Omega}f\left(x,\frac{\mu}{|\mu|}(x)\right)d|\mu|(x)
	$$
	for any continuous bounded function 
$f:\Omega\times\mathbb S^{M-1}\rightarrow\R$.
\end{Theorem}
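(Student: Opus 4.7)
The plan is to reduce the statement to (narrow) weak convergence of positive measures on the product space $\Omega\times\mathbb S^{M-1}$. I would introduce the lifted measures $\nu_h:=(\mathrm{id},\mu_h/|\mu_h|)_{\#}|\mu_h|$ and $\nu:=(\mathrm{id},\mu/|\mu|)_{\#}|\mu|$ on $\Omega\times\mathbb S^{M-1}$. By construction, the two integrals in the statement are precisely $\int f\,d\nu_h$ and $\int f\,d\nu$, so the theorem amounts to showing that $\nu_h$ converges narrowly to $\nu$, i.e.\ tested against every bounded continuous function on the product space.

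As a preliminary step, I would prove that $|\mu_h|\to|\mu|$ narrowly. Lower semicontinuity of the total variation under weak-$*$ convergence of vector measures yields $\liminf_h |\mu_h|(U)\ge|\mu|(U)$ for every open $U\subseteq\Omega$; applying the bound also to a thickening of the complement of $U$ and combining with the hypothesis $|\mu_h|(\Omega)\to|\mu|(\Omega)$ forces $|\mu_h|(V)\to|\mu|(V)$ on every $|\mu|$-continuity set $V$, which is well known to be equivalent to narrow convergence.

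The core step is the identification of narrow subsequential limits of $(\nu_h)$. Since $\nu_h(\Omega\times\mathbb S^{M-1})=|\mu_h|(\Omega)$ is bounded and converges, and $\mathbb S^{M-1}$ is compact, the family is tight, so up to a subsequence $\nu_h\to\tilde\nu$ narrowly for some finite positive Radon measure $\tilde\nu$. Its first marginal equals $|\mu|$ by the previous step, hence there is a disintegration $d\tilde\nu(x,y)=d|\mu|(x)\,d\lambda_x(y)$ with $\lambda_x$ a probability measure on $\mathbb S^{M-1}$ for $|\mu|$-a.e.\ $x$. Testing against product functions $g(x)\,y_i$ with $g\in C_c(\Omega)$ and using the weak-$*$ convergence $\mu_h\stackrel{*}{\rightharpoonup}\mu$,
\begin{equation*}
\int g(x)\,y_i\,d\nu_h(x,y)=\int g(x)\,d(\mu_h)_i(x)\longrightarrow\int g(x)\,d\mu_i(x),
\end{equation*}
one identifies the barycentre $\int y\,d\lambda_x(y)=(\mu/|\mu|)(x)$ for $|\mu|$-a.e.\ $x$. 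Since this barycentre lies on $\mathbb S^{M-1}$, Jensen's inequality for the strictly convex map $y\mapsto|y|^2$ (or equivalently strict convexity of the Euclidean ball) forces $\lambda_x=\delta_{(\mu/|\mu|)(x)}$, so $\tilde\nu=\nu$; by uniqueness of the limit the full sequence $(\nu_h)$ converges narrowly to $\nu$, and testing against $f$ yields the claim.

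The delicate point is precisely the identification step: the hypothesis $|\mu_h|(\Omega)\to|\mu|(\Omega)$ is what prevents the lifts $\nu_h$ from either losing mass or smearing the angular part onto $\mathbb S^{M-1}$, and the rigidity of the equality case of Jensen on the sphere is what concentrates $\lambda_x$ at a single point. Without the mass convergence only lower semicontinuity would survive, which is the well-known companion of Reshetnyak's theorem; upgrading it to continuity is exactly the content of the present statement.
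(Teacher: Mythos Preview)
The paper does not prove this theorem; it simply cites \cite[Theorem~2.39]{AFP}. Your argument is correct and is in fact essentially the route taken in that reference: lift to measures on $\Omega\times\mathbb S^{M-1}$, use the mass hypothesis to upgrade weak-$*$ convergence of $|\mu_h|$ to narrow convergence, pass to a subsequential narrow limit of the lifts, disintegrate, and use the rigidity of the barycentre on the sphere to identify the fibre measures as Dirac masses. One small point worth tightening: your justification of tightness of $(\nu_h)$ (``mass bounded and $\mathbb S^{M-1}$ compact'') is not quite complete, since $\Omega$ need not be compact; what actually gives tightness in the $\Omega$-direction is the narrow convergence $|\mu_h|\to|\mu|$ you already established (a narrowly convergent sequence of finite measures on a Polish space is tight), after which compactness of $\mathbb S^{M-1}$ handles the second factor. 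With that clarification the argument is complete.
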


\begin{proof}
See for instance
\cite[Theorem 2.39]{AFP}.
\end{proof}

\noindent

\subsection{Strict $BV$-convergence}\label{sec_SC}
In what follows, $\Om \subset \R^2$ is a bounded open set.
For any $u\in BV(\Omega;\R^2)$, 
the distributional derivative $Du$ is a Radon measure valued in $\R^{2\times2}$. The symbol $|Du|(\Omega)$ stands for the total variation of $Du$ (see \cite[Definition 3.4, pag. 119]{AFP},  
with $\vert \cdot \vert$ the Frobenius norm).

\begin{Definition}[\textbf{Strict convergence}]
	Let $\mappa\in BV(\Omega;\R^2)$ and $(\mappa_k)
\subset BV(\Omega;\R^2)$. 
We say that $(\mappa_k)$ converges
 to $\mappa$ strictly $BV$, 
if
	$$
	\mappa_k\xrightarrow{L^1}\mappa\qquad\mbox{and}\qquad|D\mappa_k|
(\Omega)\rightarrow|D\mappa|(\Omega).
	$$
\end{Definition}
The topology of the strict convergence in $BV$ is metrized by the distance 
$$
(u,v)\to \|u-v\|_{L^1(\Omega;\R^2)}+\left||Du|(\Omega)-|Dv|(\Omega)\right|, 
\qquad u,v \in BV(\Omega; \R^2).
$$ 

\begin{Remark}[\textbf{Weak convergences and strict convergence}]\label{rem_2.3}\rm
If $u_k\rightarrow u$ strictly $BV(\Om)$ then $u_k\rightharpoonup u$ 
$w^*$-$BV(\Om)$, where $u_k\rightharpoonup u$
$w^*$-$BV(\Om)$ means:
	$$u_k\xrightarrow{L^1} u\qquad\mbox{ and}\qquad \int_{\Omega}
\varphi\cdot D u_k\rightarrow\int_{\Omega} \varphi\cdot D u\qquad\forall\varphi\in C^0_c(\Omega;\R^2),$$
with $\cdot$ the scalar product in $\R^2$. A similar definition holds for vector valued maps.
	The converse is not true, already in one dimension:  
consider the sequence $(f_k)\subset W^{1,1}((0,2\pi))$,
	$$
	f_k(x):=\frac{1}{k}\sin(kx)\qquad \forall x\in (0,2\pi).
	$$
	Then $f_k\rightharpoonup 0$ weakly 
in $W^{1,1}((0,2\pi))$, so in particular $w^*$-$BV$, 
but the convergence is not strict in $BV$, since 
$\|f'_k\|_{L^1((0,2\pi))}=4$ for all $k\in\N$. 
	We underline that on $W^{1,1}(\Omega;\R^2)$ the strict $BV$ convergence
	 is not comparable with the weak convergence: 
the following  
 slight modification of  \cite[Example 4, pag. 42]{GMS}, 
 provides a sequence converging strictly $BV((0,1))$ but not weakly in $W^{1,1}((0,1))$.
	Consider the sequence $(g_k) \subset L^1((0,1))$ defined by 
	$$
	g_k(x):=2^k\sum_{i=0}^{k-1}
\chi_{\left[\frac{i}{k},\frac{i}{k}+\frac{1}{k2^k}\right]}(x)\qquad\forall x\in[0,1],  \ \forall k \geq 1,
	$$
where $\chi_A$ is the characteristic function of the set $A$. 
Then $\|g_k\|_{L^1}=1$ for every $k\in\N$. 
Now, let $f_k\in C([0,1])$ be the primitive of $g_k$ vanishing at $0$; then 
$(f_k)$ converges uniformly to the identity, and
	$\|f_k'\|_{L^1}=\|g_k\|_{L^1}=1=\|\mathrm{id}'\|_{L^1}$
for any $k\in\N$, and
	so $f_k\rightarrow \mathrm{id}$ strictly $BV((0,1))$.
	On the other hand, $(f'_k)$ cannot converge weakly in $L^1$ since it is not equi-integrable (see \cite[Theorem 2, pag. 50]{GMS}),
since $g_k$ tends to concentrate a large mass in arbitrarily small
sets, as $k$ becomes large.
\end{Remark}

However, 
the following result (needed in the proof of Propositions \ref{lower bound2}
and \ref{lower bound TVBV})
 shows that the strict $BV$ convergence implies the uniform one, under certain hypotheses.

\begin{Proposition}[\textbf{Strict convergence in one dimension}]
\label{strict implies uniform}
	Let $I=(a,b)\subset\R$ be a bounded interval and let $(f_k)$ be a sequence in $W^{1,1}((a,b))$. 
Suppose that $(f_k)$ converges strictly $BV((a,b))$ to $f\in W^{1,1}((a,b))$. Then $f_k\rightarrow f$ uniformly in $(a,b)$.
\end{Proposition}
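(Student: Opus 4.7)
The plan is to reduce uniform convergence of $(f_k)$ to uniform convergence of the associated total--variation primitives. For $x\in[a,b]$ set
$$F_k(x):=\int_a^x|f_k'(t)|\,dt,\qquad F(x):=\int_a^x|f'(t)|\,dt;$$
these are nondecreasing, absolutely continuous functions on $[a,b]$ with $F_k(a)=F(a)=0$ and $F_k(b)=|Df_k|((a,b))\to|Df|((a,b))=F(b)$, and they satisfy the fundamental bound $|f_k(x)-f_k(y)|\le|F_k(x)-F_k(y)|$ for every $x,y\in(a,b)$. Hence equicontinuity of $(F_k)$ will yield equicontinuity of $(f_k)$.

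The main technical step is to show that $F_k\to F$ pointwise on $[a,b]$. Strict $BV$--convergence entails weak$^*$ convergence $Df_k\rightharpoonup^* Df$ (Remark \ref{rem_2.3}) together with $|Df_k|((a,b))\to|Df|((a,b))$. Applying Theorem \ref{Reshetnyak} to integrands of the form $g(x,\nu):=\varphi(x)$ independent of the direction $\nu$, one obtains $|Df_k|\rightharpoonup^*|Df|$ as positive Radon measures on $(a,b)$, with convergence of total masses. A standard Portmanteau--type argument (lower semicontinuity on open sets, upper semicontinuity on closed sets, closed under mass conservation) then gives $|Df_k|(B)\to|Df|(B)$ for every Borel set $B\subset(a,b)$ with $|Df|(\partial B)=0$. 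Taking $B=(a,x)$ and using that $f\in W^{1,1}$ makes $|Df|$ absolutely continuous with respect to Lebesgue measure (so $|Df|(\{x\})=0$ for every $x$), we conclude $F_k(x)\to F(x)$ for every $x\in(a,b)$.

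Since each $F_k$ is nondecreasing and the pointwise limit $F$ is continuous and monotone on $[a,b]$, the convergence automatically upgrades to uniform (Dini/P\'olya--type lemma for monotone functions). This makes the family $(F_k)$ equicontinuous, and the bound $|f_k(x)-f_k(y)|\le|F_k(x)-F_k(y)|$ then transfers equicontinuity to $(f_k)$. Equi--boundedness in $L^\infty((a,b))$ is immediate from the one--dimensional embedding $W^{1,1}((a,b))\hookrightarrow L^\infty((a,b))$ and from the boundedness of $\|f_k\|_{W^{1,1}}$ guaranteed by strict $BV$--convergence. Ascoli--Arzel\`a therefore extracts from every subsequence a uniformly convergent sub--subsequence whose limit, by $L^1$--convergence, must coincide with the continuous representative of $f$; uniqueness of the limit forces the whole sequence $(f_k)$ to converge uniformly to $f$ on $[a,b]$.

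The only step that is not one--dimensional folklore is the identification of the pointwise limit $F_k(x)\to F(x)$: this is where strict $BV$--convergence is genuinely used, through Reshetnyak's theorem applied to $|Df_k|$ and the absolute continuity of $|Df|$ against Lebesgue measure. Everything else (equi--boundedness, Ascoli--Arzel\`a, Dini for monotone functions) is routine.
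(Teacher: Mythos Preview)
Your proof is correct and takes a genuinely different route from the paper's. Both arguments hinge on the same core fact---that $\int_a^x |f_k'|\to\int_a^x |f'|$ for every $x\in[a,b]$---but establish and exploit it differently. The paper proves this convergence on each subinterval $J$ by an elementary lower-semicontinuity sandwich (lsc on $J$ and on $I\setminus\overline J$, combined with strict convergence on $I$), and then runs a short contradiction: if $|f_k(x_k)-f(x_k)|>\delta$ along $x_k\to\bar x$, one picks a small interval $E\ni\bar x$ with $\int_E|f'|<\delta/4$, so that eventually $\int_E|f_k'|<\delta/2$, which forces $|f_k(y)-f(y)|\ge\delta/4$ for \emph{every} $y\in E$, contradicting $L^1$-convergence. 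Your argument instead packages the measure convergence via Reshetnyak and Portmanteau, upgrades pointwise to uniform convergence of the monotone primitives $F_k$ via the P\'olya/Dini lemma, and closes with Ascoli--Arzel\`a. The paper's route is more elementary (no Reshetnyak, no compactness theorems, just semicontinuity and a localization trick), while yours is more structural and makes the equicontinuity of $(f_k)$ explicit, which can be convenient in other contexts. One small caveat: Theorem~\ref{Reshetnyak} as stated in the paper assumes $M\ge 2$, so strictly speaking you are invoking it for scalar-valued measures; this is harmless (embed $\mathbb R$ in $\mathbb R^2$, or note that in the scalar case the conclusion $|Df_k|\rightharpoonup^*|Df|$ is exactly the elementary sandwich the paper writes out for \eqref{convergence on J}), but worth flagging.
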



\begin{proof}
	First of all, for any open interval $J\subset I$ we have
	\begin{equation}\label{convergence on J}
	\lim_{k\rightarrow+\infty}\int_{J}|f_k'|dx=\int_{J}|f'|dx.
	\end{equation}
	Indeed, since $f_k\rightharpoonup f$ $w^*$-$BV(I)$, by 
the lower semicontinuity of the variation, one has
	$$
	\int_J|f'|dx\leq\liminf_{k\rightarrow+\infty}\int_{J}|f_k'|dx.
	$$
	On the other hand, using the strict $BV$ convergence on $I$ and 
again the lower semicontinuity of the variation, we get
	\begin{equation*}
	\begin{aligned}
& \int_{J}|f'|dx
=
\int_{\overline{J}}|f'|dx=\int_{I}|f'|dx
-
\int_{I\setminus\overline{J}}|f'|dx
\geq
\lim_{k\rightarrow+\infty}\int_{I}|f_k'|dx
-\liminf_{k\rightarrow+\infty}\int_{I\setminus\overline{J}}|f_k'|dx
\\
= & 
\limsup_{k\rightarrow+\infty}\left(\int_{I}|f_k'|dx-\int_{I\setminus\overline{J}}|f_k'|dx\right)
=\limsup_{k\rightarrow+\infty}\int_{J}|f_k'|dx,
	\end{aligned}
	\end{equation*}
	so \eqref{convergence on J} holds.\\
	Now, since $f$ and $f_k$ belong to $W^{1,1}(I)$, we may
assume that they are continuous. By contradiction, suppose that $(f_k)$ 
does not converge uniformly to $f$, so that, 
up to a not relabeled subsequence, 
 we may suppose:
	\begin{equation}\label{1}
	\exists\delta>0\quad\exists(x_k)\subset I\quad\exists k_0\in\N: \quad|f_k(x_k)-f(x_k)|>\delta\quad\forall k\geq k_0,
	\end{equation}
and that 
there exists $\overline{x}\in\overline{I}$ such that $x_k\rightarrow \overline x$. Now consider an open interval $E\subset\overline{I}$ such that $\overline{x}\in E$ and 
	\begin{equation}\label{2}
	\int_E|f'|dx<\frac{\delta}{4}
	\end{equation}
	(in case $\overline{x}=a$ or $\overline{x}=b$, $E$ 
is a semi-open interval). 
Using \eqref{convergence on J}, we can find an index 
$k_1\in\N$ such that $k_1\geq k_0$ and
	\begin{equation}\label{3}
	\int_E|f'_k|dx<\frac{\delta}{2} \qquad\forall k\geq k_1.
	\end{equation}
	Moreover, there exists $k_2\in\N$, $k_2\geq k_1$, 
such that $x_k\in E$ for every $k\geq k_2$. 
	Pick a point $y\in E$; then for every $k\geq k_2$, using \eqref{1}, \eqref{2}, and \eqref{3}, we have
	\begin{equation}\nonumber
	\begin{aligned}
	|f_k(y)-f(y)|&\geq-|f_k(y)-f_k(x_k)|+|f_k(x_k)-f(x_k)|-|f(x_k)-f(y)|\\
	&\geq-\int_{x_k}^y|f'_k|dx+\delta-\int_{x_k}^y|f'|dx
	\geq-\int_E|f'_k|dx+\delta-\int_E|f'|dx\\
	&\geq -\frac{\delta}{2}+\delta-\frac{\delta}{4}=\frac{\delta}{4}.
	\end{aligned}
	\end{equation}
	Hence, $(f_k)$ (as any subsequence of it) does not converge to $f$ 
pointwise at every point of $E$ which leads to a contradiction,
since $|E|>0$ and $f_k\xrightarrow{L^1(E)} f$.
\end{proof}

\begin{Remark}\label{rem1}\rm
Proposition \ref{strict implies uniform} is still valid
with the same proof
 when $f_k$ and $f$ are vector valued.
On the contrary, it is crucial that the domain is unidimensional, since counterexamples can be done already in dimension 2: 
for instance, the sequence $(f_k)$ given by 
	$f_k(x):=\max\{(1-k|x|), 0\}$, $x\in\R^2$,
	converges to $0$ 
in $W^{1,1}(\R^2)$ but not uniformly in any neighborhood of the origin.
\end{Remark}

\subsection{The Jacobian determinant and its total variation}
\label{sec:the_Jacobian_determinant_and_its_total_variation}
\begin{Definition}[\textbf{Total variation of the Jacobian}]
\label{def:total_variation_of_the_Jacobian}
Let $u\in W^{1,2}_{{\rm loc}}(\Omega;\R^2)$. We define the total variation of the Jacobian of $u$  as
\begin{equation}\label{eq:TV}
	\totvarjac(u;\Omega)=\int_\Omega|\mathrm{det}\grad u|dx.
\end{equation}
\end{Definition}
We need 
to define $\totvarjac(\cdot; \Om)$ for other Sobolev maps, 
in particular for maps with singularities,  the main example 
being the vortex map $\vortexmap$ in \eqref{vortex}.
This can  be accomplished in two ways. The first one is to define the
 distributional 
Jacobian determinant Det$\grad u$: if\footnote{If 
$p=2$ then $u\in W^{1,2}(\Omega;\R^2)$.
}
 $p \in [1,2)$ and 
$u\in W^{1,p}(\Omega;\R^2)
\cap L^\infty_{{\rm loc}}(\Omega;\R^2)$, 
\begin{equation}\label{distributional det}
<\mathrm{Det}\grad u,\varphi>:=-\frac{1}{2}\int_\Omega\mathrm{adj}\grad u(x)u(x)\cdot \nabla\varphi(x)dx\qquad\forall\varphi\in{C}^\infty_c(\Omega),
\end{equation}
where ${\rm adj}\grad u := 
\left(
\begin{array}{cc}
\frac{\partial u_2}{\partial y} 
& -\frac{\partial u_1}{\partial y} 
\\
- \frac{\partial u_2}{\partial x}  
& \frac{\partial u_1}{\partial x} 
\end{array}
\right)$.
This definition is 
justified by 
the property 
$$
u \in C^2(\Om; \R^2) \Rightarrow \mathrm{det}\grad u=\frac{1}{2}\mathrm{div}(\mathrm{adj}\grad u\, u).
$$
Notice that, if $u\in C^2(\Om; \R^2)$ and $B_r(x)\subset\subset \Om$, 
then by the divergence theorem, writing the outward
unit normal to $\partial B_r(x)$  as
$\nu = (\nu_1,\nu_2)$, and its $\pi/2$-counterclockwise
rotation $\nu^\perp= \tau = (\tau_1,\tau_2)$, 
\begin{equation}\label{int_det_bordo}
\begin{aligned}
\int_{B_r(x)}\mathrm{det}\grad u\,dz
=&
\frac12
\int_{\partial B_r(x)}(\mathrm{adj}\grad u\, u)\cdot \nu~ d\mathcal H^1
\\
=& 
\frac{1}{2}
\int_{\partial B_r(x)}
\left(
\big(
\frac{\partial u_2}{\partial y} u_1
-
\frac{\partial u_1}{\partial y} u_2
\big)\nu_1 + 
\big(
-\frac{\partial u_2}{\partial x} u_1
+
\frac{\partial u_1}{\partial x} u_2
\big)
\nu_2\right) d\mathcal H^1
\\
=& 
\frac{1}{2}
\int_{\partial B_r(x)}
\left(
u_1
\big(
\frac{\partial u_2}{\partial y},  
-\frac{\partial u_2}{\partial x}\big) \cdot \nu +  
u_2
\big(-\frac{\partial u_1}{\partial y},  
\frac{\partial u_1}{\partial x}\big) \cdot \nu 
\right) 
d\mathcal H^1
\\
=&
\frac{1}{2}
\int_{\partial B_r(x)}
\left(
u_1 \grad u_2 \cdot \tau -u_2 \grad u_1 \cdot \tau
\right) 
d\mathcal H^1
\\
=& \frac{1}{2}\int_{\partial B_r(x)}\Big(u_1\frac{\partial u_2}{\partial s}-u_2\frac{\partial u_1}{\partial s}\Big)ds.
\end{aligned}
\end{equation} 
By \cite[Formula (3.7)]{Mu} (which in turn is a consequence of Theorem 3.2 in \cite{Mu}), one sees that formula \eqref{int_det_bordo} is valid also for $u\in W^{1,\infty}(\Om;\R^2)$.
	
	We recall that  
	$$
	\mathrm{Det}\grad u=\mathrm{det}\grad u\quad\forall u\in W^{1,2}(\Omega;\R^2), 
	$$ 
	while if $p \in [1,2)$ they can differ,
for instance ${\rm det}\grad \vortexmap$ is
null, whereas ${\rm Det}\grad \vortexmap=\pi\delta_0$ (see \cite{Pa}).
Then one is led to define $\totvarjac(u; \Om)=\vert {\rm Det} \grad u\vert(\Om)$,
for those $u$ for which ${\rm Det} \grad u$ is a Radon measure
with finite total variation in $\Om$.

The second way is to argue by relaxation.
For $p\in[1,2]$
 and $u\in W^{1,p}(\Omega;\R^2)$ one sets
\begin{equation}\label{relaxed TV}
\begin{split}
&
\totvarjac_{W^{1,p}}(u;\Omega)
:=\inf\left\{\liminf_{k\rightarrow+\infty}\totvarjac(\mappabuona_k;\Omega): 
(\mappabuona_k)\subset C^1(\Omega;\R^2)\cap W^{1,p}(\Omega;\R^2),
\mappabuona_k\rightarrow u\mbox{ in }W^{1,p} \right\}.
\end{split}
\end{equation}
It is known that $\totvarjac(u; \Omega) = \totvarjac_{W^{1,2}}(u;\Omega)$
for $u \in W^{1,2}(\Om; \R^2)$. Moreover, 
 when $p\in [1,2)$, 
$\totvarjac_{W^{1,p}}(\cdot;\Omega)$ coincides
with the total variation of the Jacobian distributional determinant of $u$, 
provided $u\in  W^{1,p}(\Om;\Suno)$
(see Theorem \ref{Brezis thm} below, and \cite[Theorem 11 and Remark 12]{BMP}). The same conclusions do not 
hold in general, for maps in $W^{1,p}(\Om;\R^2)$ which do 
not take values in $\Suno$ (see \cite[Open problem 5]{BMP}).
Notice also that relaxation in \eqref{relaxed TV} can also be done with respect to the weak convergence in $W^{1,p}$ (we do not treat this in the present paper and refer the reader to \cite{BMP,DP,Pa}).

We emphasize that we required $C^1$-regularity for the approximating sequences in \eqref{relaxed TV}. 
This ensures that such sequences are contained in $W^{1,2}_{\text{loc}}(\Om; \R^2)$ 
which is the minimal feature to guarantee that ${\rm det} \grad v_k
\in L^1_{\text{loc}}(\Om)$. Replacing the $C^1$-regularity 
with the $W^{1,2}_{\text{loc}}$-regularity\footnote{As sometimes can be found in literature.} gives rise to the same relaxed functionals; 
this can be seen by a density argument, since any $v\in W^{1,2}_{\text{loc}}(\Om;\R^2)$ can be approximated by maps $\mappabuona_k \in C^1(\Om; \R^2)$ 
in $W^{1,2}_{\text{loc}}(\Om; \R^2)$ (such a
convergence ensures the corresponding convergence of $\totvarjac(\mappabuona_k;\Om)$ 
to $\totvarjac(v;\Om)$). 
	In the same way, one can also replace the $C^1$-regularity with the $C^\infty$-regularity.

One can also relax $\totvarjac$ with respect to the strict $BV$ convergence:
this will be the content of Theorem \ref{teo:relaxation_of_TV_in_the_strict_convergence}. Moreover, the relaxation 
with respect to the $L^1$ convergence is possible, but not interesting for us, because 
we will deal with maps with values in $\Suno$, so the resulting relaxed functional turns out to be zero (see \cite[Corollary 5]{BMP}).

\subsection{Multiplicity and degree}\label{sec:multiplicity_and_degree}
In what follows 
$\sourceballrx$ denotes the open ball of $\R^2$ centered at $x$ of radius $r>0$.
\begin{Definition}[\textbf{Multiplicity}]
Given $u \in W^{1,1}(\Omega;\R^2)$, 
for all measurable sets $A\subseteq\Om$ and all $y\in \R^2$,  we set
$$
\mathrm{mult}(u,A,y) :=\sharp\{u^{-1}(y)\cap A\cap \mathcal R_u\},
$$
where $\mathcal R_u\subseteq \Om$ 
is the set of regular points of $u$ (see \cite[pag. 202]{GMS}). 
Similarly, if 
$u \in W^{1,1}(\partial B_r(x);\Suno)$, we define 
$$
\mathrm{mult}(u,A,y) :=\sharp\{u^{-1}(y)
\cap A\cap \mathcal R_u\},
$$
for all measurable sets $A\subseteq\partial B_r(x)$ and all $y\in \Suno$. 
\end{Definition}

 Let $u \in W^{1,1}(\Omega;\R^2)$;  by \cite[Theorem 1-6, Section 3.1.5]{GMS}, if ${\rm det} \grad u  \in L^1(\Om)$, we have 
\begin{equation}\label{eq:det_mult}
           \int_A|\det\nabla u|dx=\int_{\R^2}
\mathrm{mult}(u,A,y)
dy, 
\end{equation}
for any measurable set $A\subseteq\Om$.
In particular, 
$\mathrm{mult}(u,A,\cdot)$ is measurable and 
finite a.e. in  $\R^2$.

If a Lipschitz continuous map $\varphi:\partial B_r(x)\rightarrow\Suno$ has constant multiplicity on $\partial B_r(x)$, 
then we will make use of the simplified notation 
$$ \mathrm{mult}(\varphi):=\mathrm{mult}(\varphi,\partial B_r(x),\cdot).$$

\begin{Definition}[\textbf{Degree}]
Given $u \in W^{1,1}(\Omega;\R^2)$
with ${\rm det} \grad u \in L^1(\Om)$, 
for all measurable sets $A\subseteq\Om$,  we let
\begin{equation}\label{eq:deg_mult}
\grado(u,A,y) :=\sum_{x\in u^{-1}(y)\cap A\cap \mathcal R_u}{\rm sign}(\det\nabla u(x)),\\
\end{equation}
for those $y\in \R^2$ for which 
$\mathrm{mult}(u,A,\cdot)$ is 
finite.
\end{Definition}
Clearly
\begin{equation}\label{eq:mult_vert_deg}
\mathrm{mult}(u,A,\cdot)\geq|\grado(u,A,\cdot)|.
\end{equation}
 By \cite[Theorem 1-6, Section 3.1.5]{GMS}, if ${\rm det} \grad u \in L^1(\Om)$, then 
\begin{equation}\label{degreeintegral}
	\int_A\det\nabla u\,dx=\int_{\R^2}\grado(u,A,y)dy,
\end{equation}
for any measurable set $A\subseteq\Om$, and by \eqref{eq:det_mult} and
\eqref{eq:mult_vert_deg}
\begin{align}\label{213}
\int_\Om|\det\nabla u|dx\geq\int_{\R^2}|\grado(u,\Om,y)|dy.
\end{align}
\begin{Remark}\label{rem:too_weak}\rm
The notion \eqref{eq:deg_mult} of degree is too weak to be related to the trace of $u$ on $\partial \Om$. However, homological invariance is 
recovered under stronger hypotheses on $u$; 
for instance  if $u,v$ are Lipschitz in 
$\widehat \Om\supset\supset\Om$ and $u=v$ in $\widehat\Om
\setminus \overline\Om$, then 
$\grado(u,\Om,\cdot)=\grado(v,\Om,\cdot)$ a.e. 
in $\R^2$ (see \cite[pag. 233 and 469]{GMS}). 
In particular, if $u,v:\sourceballrx\rightarrow \R^2$ 
are Lipschitz continuous and $u=v$ on $\partial \sourceballrx$, 
then we might extend $u$ to a Lipschitz map $\overline u$ on $\R^2$; 
the map $\overline v$ coinciding with $v$ in $\sourceballrx$ 
and with $\overline u$ outside $\sourceballrx$ 
is  a Lipschitz extension of $v$. 
Hence $\grado(\overline u,\sourceballrx,\cdot)
=\grado(\overline v,\sourceballrx,\cdot)$, 
which implies $\grado(u,\sourceballrx,\cdot)=\grado(v,\sourceballrx,\cdot)$.
\end{Remark}

\begin{Definition}
For an open disc $\sourceballrx\subset\R^2$ and $u\in W^{1,1}(\partial B_r(x);\Suno)$, we define 
\begin{equation}\label{eq:grado}
\grado(u):=\frac{1}{2\pi}\int_{\partial \sourceballrx}\Big(u_1\frac{\partial u_2}{\partial s}-u_2\frac{\partial u_1}{\partial s}\Big)ds\;\in\;\mathbb Z. 
\end{equation}
If $u\in W^{1,1}(\Om;\Suno)$, $B_r(x)\subset\subset\Om$, and $u \mres 
\partial \sourceballrx \in W^{1,1}(\partial \sourceballrx; \Suno)$ (which is true
for almost every $r$), 
we set
\begin{equation}\label{eq:deg_restriction}
	\grado(u,\partial \sourceballrx)
:=\grado (u\mres \partial B_r(x)).
\end{equation}
\end{Definition}

\begin{Remark}\label{rem:coincidence_of_degrees}\rm
If $u:\sourceballrx\rightarrow\R^2$ 
is Lipschitz continuous and $|u|=1$ on $\partial \sourceballrx$, 
then $\grado(u,\sourceballrx,\cdot)$ is constant in
$B_1=B_1(0)$, and coincides with $\grado(u,\partial \sourceballrx)$. 
Indeed $\grado(u,\sourceballrx,\cdot)$ is a constant $c$ in $B_1$ 
thanks to \cite[Theorem 1.3]{Ham} (and zero on $\R^2 \setminus
B_1$), and then it is sufficient to 
check that $\grado(u,\sourceballrx,y)=\grado(u,\partial \sourceballrx)$, for a.e. $y\in B_1$. By applying  \eqref{int_det_bordo} to the left-hand side of \eqref{degreeintegral} one has 
$$
\int_{\R^2} {\rm deg}(u, \sourceballrx, y) ~dy=
\int_{B_1} {\rm deg}(u, \sourceballrx, y) ~dy=
\pi c = 
\int_{\sourceballrx} {\rm det} \grad u~dx=\pi\grado(u\mres\partial B_r(x)).
$$
In this particular case, thanks to \eqref{213}, we conclude
\begin{align}\label{stima_jacob_grado}
\int_{\sourceballrx}|\det\nabla u|dx\geq\int_{B_1}|\grado(u,\partial \sourceballrx)|dy=\pi|\grado(u,\partial \sourceballrx)|.
\end{align}
\end{Remark}

\subsection{Singular Sobolev maps with values in $\Suno$}
\label{subsec:Sobolev_maps_with_values_in_S1}
We will make use of  the following theorems.
\begin{Theorem}\label{Brezis thm}
 Let $u\in W^{1,1}(\Omega;\Suno)$. Then 
	$$
	\totvarjac_{W^{1,1}}(u;\Omega)<+\infty\Longleftrightarrow \mathrm{Det}\grad u\quad \mbox{is a Radon measure}.
	$$
	In this case $\totvarjac_{W^{1,1}}(u;\Omega)=|\mathrm{Det}\grad u|(\Om)$,
and there exists a finite set $\{x_1,\dots,x_\cardsing\}$
of points in $\Omega$ 
such that
	\begin{equation}\label{eq:Det_grad_u_sum_deltas}
	\mathrm{Det}\grad u=\pi\sum_{i=1}^\cardsing d_i\delta_{x_i},
	\end{equation}
	where $d_i=\grado(u,\partial B_{r_i}(x_i)) \in \Z\setminus \{0\}$ 
for a.e. $r_i > 0$ small enough.
	In particular 
	$$
	|\mathrm{Det}\grad u|(\Om)
=\pi\sum_{i=1}^\cardsing |d_i|.
	$$
\end{Theorem}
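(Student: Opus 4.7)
The statement packages together three facts: a lower bound $\totvarjac_{W^{1,1}}(u;\Omega)\ge |\mathrm{Det}\grad u|(\Omega)$; a structural result identifying $\mathrm{Det}\grad u$ with a finite sum of integer-weighted Dirac masses; and a matching upper bound via an explicit recovery sequence. I would prove them in this order.

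For the lower bound, fix $(v_k)\subset C^1(\Omega;\R^2)\cap W^{1,1}(\Omega;\R^2)$ with $v_k\to u$ in $W^{1,1}$. Since each $v_k$ is smooth we have $\mathrm{Det}\grad v_k=\mathrm{det}\grad v_k$ pointwise, hence also as distributions. The representation \eqref{distributional det} together with $v_k\to u$ in $W^{1,1}$ and (after truncation, using $|u|=1$) uniform $L^\infty$ bounds lets me pass to the limit in
$$\langle \mathrm{Det}\grad v_k,\varphi\rangle=-\tfrac12\int_\Omega \mathrm{adj}\grad v_k\, v_k\cdot \grad\varphi\,dx \longrightarrow \langle \mathrm{Det}\grad u,\varphi\rangle$$
for every $\varphi\in C^\infty_c(\Omega)$. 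Then, for $\varphi$ with $\|\varphi\|_\infty\le 1$,
$$\langle \mathrm{Det}\grad u,\varphi\rangle=\lim_k \int_\Omega \mathrm{det}\grad v_k\,\varphi\,dx\le \liminf_k \int_\Omega |\mathrm{det}\grad v_k|\,dx,$$
and taking the supremum over $\varphi$ yields $|\mathrm{Det}\grad u|(\Omega)\le \totvarjac_{W^{1,1}}(u;\Omega)$. In particular, if $\totvarjac_{W^{1,1}}(u;\Omega)<+\infty$ then $\mathrm{Det}\grad u$ is a Radon measure of finite total variation.

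For the structure \eqref{eq:Det_grad_u_sum_deltas}, I would use the $\Suno$-valued constraint. Pick $x\in\Omega$ and $r>0$ small so that $u\mres \partial B_r(x)\in W^{1,1}(\partial B_r(x);\Suno)$; by \eqref{int_det_bordo} (valid for Lipschitz maps, applied to a smooth approximation and passed to the limit via the trace on $\partial B_r(x)$), the restriction $u\mres B_r(x)$ satisfies
$$\langle \mathrm{Det}\grad u,\chi_{B_r(x)}\rangle = \pi\,\grado(u,\partial B_r(x))\in \pi\Z.$$
Hence $\mathrm{Det}\grad u$ is a $\pi\Z$-valued measure on balls centered at almost every radius. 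A standard argument (see \cite{Pa,BMP}) then shows that such a measure must be atomic, supported at finitely many points $x_i$, with weights $\pi d_i$ where $d_i\in\Z\setminus\{0\}$ equals $\grado(u,\partial B_{r}(x_i))$ for a.e.\ small $r$; this yields \eqref{eq:Det_grad_u_sum_deltas} and $|\mathrm{Det}\grad u|(\Omega)=\pi\sum_i |d_i|$.

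For the upper bound, given the structure of $\mathrm{Det}\grad u$, I would build a recovery sequence $v_k$ differing from $u$ only on the union of small disks $B_{\rho_k}(x_i)$, $\rho_k\downarrow 0$. On each such disk, I would replace $u$ with a Lipschitz ``filling'' $\varphi_{k,i}$ agreeing with $u$ on $\partial B_{\rho_k}(x_i)$ and having $\int_{B_{\rho_k}(x_i)}|\mathrm{det}\grad \varphi_{k,i}|\,dx \le \pi|d_i|+o(1)$ as $k\to\infty$: the existence of such a filling saturating the optimal bound \eqref{stima_jacob_grado} follows from the explicit construction (a radial cone combined with a map of degree $d_i$ from $\Suno$ to $\Suno$, smoothed), and the convergence in $W^{1,1}$ is guaranteed by $|B_{\rho_k}(x_i)|\to 0$ combined with equi-integrability of $|\grad u|$ away from the $x_i$'s. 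Summing the contributions of the $N$ singularities gives $\limsup_k \totvarjac(v_k;\Omega)\le \pi\sum_i |d_i|=|\mathrm{Det}\grad u|(\Omega)$, which, combined with the lower bound, closes the proof.

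The delicate step is the structure result: rigorously justifying that the only Radon measures arising as $\mathrm{Det}\grad u$ for $u\in W^{1,1}(\Omega;\Suno)$ are finite sums of integer-weighted Dirac masses requires the approximation on circles and a Fubini-type argument (selecting good radii) to transfer the quantization $\grado(u,\partial B_r(x))\in\Z$ from almost-every-radius to a pointwise discrete structure of the measure.
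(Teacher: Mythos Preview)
The paper does not prove this theorem at all: its ``proof'' is a one-line citation to \cite[Theorem 11]{BMP} and \cite[Proposition 5.2]{JS}. Your outline is a reasonable reconstruction of the strategy behind those references and is broadly correct in its architecture (lower bound by distributional continuity of $\mathrm{Det}$, quantization/structure via degree on circles, upper bound by explicit radial fillings near the singular points).

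Two points deserve more care than your sketch provides. First, in the lower bound you invoke ``after truncation, using $|u|=1$'' to get uniform $L^\infty$ bounds on $v_k$; but truncating $v_k$ to $\overline{B}_1$ does not automatically preserve $W^{1,1}$ convergence of $\mathrm{adj}\grad v_k\,v_k$ to $\mathrm{adj}\grad u\,u$, so one should instead argue directly that $\mathrm{adj}\grad v_k\,v_k\to \mathrm{adj}\grad u\,u$ in $L^1$ using $v_k\to u$ in $W^{1,1}$ together with the product structure (the $L^\infty$ bound on $u$ suffices, and for $v_k$ one uses that $v_k\to u$ in $L^p$ for every $p<\infty$ on bounded domains by interpolation, or works with the original untruncated sequence). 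Second, and more substantially, your structure argument is only a sketch: the passage from ``$\mathrm{Det}\grad u(B_r(x))\in\pi\Z$ for a.e.\ $r$'' to ``$\mathrm{Det}\grad u$ is a finite sum of Dirac masses'' is exactly the content of the cited results and requires a genuine measure-theoretic argument (e.g.\ showing the measure has no diffuse part because any nonatomic finite measure would assign non-quantized mass to some small ball). You correctly flag this as the delicate step, but as written your proposal defers it back to \cite{Pa,BMP}, which is essentially what the paper does.
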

\begin{proof}
See for instance \cite [Theorem 11]{BMP} and \cite[Proposition 5.2]{JS}.
\end{proof}
\begin{Remark}
Theorem \ref{Brezis thm} provides the existence of a radius $r_i>0$ such that the number $d_i$ not only is the degree of the trace of $u$ on $\partial B_{r_i}(x_i)$, but also on almost every circumference $\partial B_\rho(x_i)$ with $\rho<r_i$. Moreover, on these circumferences, we may
 assume that $u$ is continuous, since its trace is still of class $W^{1,1}$. 
For more details, we refer the reader to \cite{BMP}.
\end{Remark}

\begin{Theorem}\label{teo_approx}
	Let $u\in W^{1,1}(\Suno;\Suno)$. Then there exists a sequence in
$C^\infty(\Suno;\Suno)$ converging to $u$ in $ W^{1,1}(\Suno;\Suno)$.
\end{Theorem}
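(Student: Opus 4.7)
My plan is to reduce the problem to scalar density in $W^{1,1}(0,2\pi)$ via a $W^{1,1}$ lifting, exploiting that the one dimensional structure of $\Suno$ makes both such a lifting and the Sobolev embedding into $C^0$ available. The three ingredients will be: (i) construct $\phi\in W^{1,1}(0,2\pi)$ with $u=(\cos\phi,\sin\phi)$; (ii) approximate $\phi$ smoothly in $W^{1,1}$ while correcting a finite dimensional defect to preserve the topological degree $d=(\phi(2\pi)-\phi(0))/(2\pi)$; (iii) return to $u_n:=(\cos\phi_n,\sin\phi_n)$ and verify the convergence.

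For (i), identify $\Suno$ with $[0,2\pi]/\{0\sim 2\pi\}$. Since $W^{1,1}$ in one dimension embeds into $C^0$, the map $u=(u_1,u_2)$ is continuous. Set $\phi'(\theta):=u_1(\theta)u_2'(\theta)-u_2(\theta)u_1'(\theta)\in L^1(0,2\pi)$ and $\phi(\theta):=\phi_0+\int_0^\theta\phi'(s)\,ds$, with $\phi_0$ chosen so that $u(0)=(\cos\phi_0,\sin\phi_0)$. Differentiating $|u|^2=1$ gives $u_1u_1'+u_2u_2'=0$ a.e., and a short computation using this identity shows that $u_1\equiv\cos\phi$ and $u_2\equiv\sin\phi$ on $[0,2\pi]$. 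The periodicity $u(0)=u(2\pi)$ then forces $\phi(2\pi)-\phi(0)\in 2\pi\Z$.

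For (ii) and (iii), pick $\tilde\phi_n\in C^\infty([0,2\pi])$ with $\tilde\phi_n\to\phi$ in $W^{1,1}(0,2\pi)$ (standard density). By the one dimensional Sobolev embedding the convergence is also uniform, so the boundary values converge. Define
\[
\phi_n(\theta):=\tilde\phi_n(\theta)+\alpha_n\theta, \qquad
\alpha_n:=d-\frac{\tilde\phi_n(2\pi)-\tilde\phi_n(0)}{2\pi},
\]
so $\alpha_n\to 0$, $\phi_n\to\phi$ in $W^{1,1}(0,2\pi)$, and $\phi_n(2\pi)-\phi_n(0)=2\pi d$. Consequently $u_n:=(\cos\phi_n,\sin\phi_n)$ has matching derivatives of every order at $0$ and $2\pi$, hence $u_n\in C^\infty(\Suno;\Suno)$. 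The $L^1$ convergence $u_n\to u$ is immediate from the uniform convergence of $\phi_n$, and from
\[
u_n'-u'=(\phi_n'-\phi')(-\sin\phi_n,\cos\phi_n)+\phi'\bigl[(-\sin\phi_n,\cos\phi_n)-(-\sin\phi,\cos\phi)\bigr]
\]
the first summand tends to $0$ in $L^1$ by the $L^1$ convergence of $\phi_n'$, and the second by dominated convergence with dominant $2|\phi'|$. This yields $u_n\to u$ in $W^{1,1}(\Suno;\Suno)$.

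The substantive point, and the only reason plain componentwise mollification does not suffice, is preserving the topology: a direct smoothing of $u$ generally leaves $\Suno$, while a smoothing of $\phi$ may destroy the integer winding number. Both obstructions are removed, respectively, by passing through the lifting and by the linear correction $\alpha_n\theta$, which is where the finite-dimensional topological constraint is absorbed.
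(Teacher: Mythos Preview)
The paper does not prove this statement itself; it simply cites \cite{Miru}. Your lifting-then-approximating strategy is the standard one and is sound in outline, but there is a genuine gap at the point where you assert $u_n\in C^\infty(\Suno;\Suno)$. The condition $\phi_n(2\pi)-\phi_n(0)=2\pi d$ guarantees only that $u_n$ is \emph{continuous} on $\Suno$; it says nothing about the derivatives of $\phi_n$ at the endpoints. Since $u_n'=\phi_n'(-\sin\phi_n,\cos\phi_n)$, you also need $\phi_n'(0)=\phi_n'(2\pi)$ (and likewise for all higher derivatives) in order that $u_n$ be smooth across the gluing point. Your linear correction $\alpha_n\theta$ cannot enforce this: its derivative is constant, so $\phi_n'(2\pi)-\phi_n'(0)=\tilde\phi_n'(2\pi)-\tilde\phi_n'(0)$, and a generic $C^\infty([0,2\pi])$ approximant $\tilde\phi_n$ has no reason to make this vanish. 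In general your $u_n$ is only Lipschitz on $\Suno$, not $C^\infty$.

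The fix is to build periodicity into the approximation rather than correct it afterward. Set $\psi:=\phi-d\,\theta$; then $\psi(0)=\psi(2\pi)$, so $\psi$ extends to a $2\pi$-periodic function in $W^{1,1}_{\mathrm{loc}}(\R)$. Mollify periodically to obtain $\psi_n\in C^\infty(\R)$, $2\pi$-periodic, with $\psi_n\to\psi$ in $W^{1,1}(0,2\pi)$, and put $\phi_n:=\psi_n+d\,\theta$. Then $\phi_n(2\pi)-\phi_n(0)=2\pi d$ and $\phi_n^{(k)}(0)=\phi_n^{(k)}(2\pi)$ for every $k\geq 1$, so $u_n=(\cos\phi_n,\sin\phi_n)$ is genuinely in $C^\infty(\Suno;\Suno)$, and your convergence argument in step (iii) goes through unchanged.
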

\begin{proof}
See 
\cite[Theorem 2.1]{Miru}.
\end{proof}

	\begin{Theorem}\label{approssimanti lisce}
Let $B\subset \R^2$ be a bounded open connected set, and
$u\in W^{1,1}(B;\Suno)$. Then there exists a 
		sequence in 
$C^\infty(B;\Suno)$ converging
		to $u$ in $W^{1,1}(B;\Suno)$ if and only if Det$\grad u=0$ in the sense of distribution.
	\end{Theorem}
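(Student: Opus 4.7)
The plan is to prove the two implications separately. For the forward implication, observe that for any $v \in C^\infty(B;\Suno)$, differentiating $|v|^2 = 1$ yields $v \cdot \partial_j v = 0$ for $j=1,2$, so the two columns of $\grad v$ both lie in the one-dimensional line $v^\perp$ and are parallel; thus $\det\grad v \equiv 0$, whence $\mathrm{Det}\grad v = \det\grad v = 0$. Given $(v_k) \subset C^\infty(B;\Suno)$ with $v_k \to u$ in $W^{1,1}(B;\R^2)$, the uniform bound $|v_k| = 1$ together with $v_k \to u$ in $L^1$ and $\grad v_k \to \grad u$ in $L^1$ gives $\mathrm{adj}\grad v_k \cdot v_k \to \mathrm{adj}\grad u \cdot u$ in $L^1(B;\R^2)$; then \eqref{distributional det} yields $\mathrm{Det}\grad v_k \to \mathrm{Det}\grad u$ in $\mathcal{D}'(B)$, and since each $\mathrm{Det}\grad v_k = 0$ we conclude $\mathrm{Det}\grad u = 0$.

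For the reverse implication, assuming $\mathrm{Det}\grad u = 0$, the strategy is to produce smooth approximants via local phases and a patching argument. On any open simply connected $U \subset B$, the $L^1$ vector field $X := u_1 \grad u_2 - u_2 \grad u_1$ satisfies the identity $\mathrm{curl}\, X = 2\,\mathrm{Det}\grad u\big|_U$ in $\mathcal{D}'(U)$, as one checks directly from \eqref{distributional det}. Under our hypothesis, $X$ is curl-free on $U$, and a distributional Poincar\'e lemma combined with the Bourgain-Brezis-Mironescu lifting theory cited in \cite{BMP,Miru} produces $\theta_U \in W^{1,1}(U)$ with $\grad \theta_U = X$ and $u = (\cos\theta_U, \sin\theta_U)$ on $U$. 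Cover $B$ by finitely many open simply connected sets $U_1,\ldots,U_N$ with chosen local phases $\theta_i \in W^{1,1}(U_i)$; on each overlap $U_i \cap U_j$ the difference $\theta_i - \theta_j$ takes values in $2\pi\Z$ almost everywhere and, being a $W^{1,1}$ function valued in a discrete set, is locally constant on connected components.

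Next I would patch and mollify. Fix a standard mollifier $\rho_\eps$ and set $\theta_{i,\eps} := \theta_i \ast \rho_\eps$ on $U_i^\eps := \{x \in U_i : B_\eps(x) \subset U_i\}$. For $x \in U_i^\eps \cap U_j^\eps$, the ball $B_\eps(x)$ is contained in a single connected component of $U_i \cap U_j$, so $\theta_{i,\eps}(x) - \theta_{j,\eps}(x) \in 2\pi\Z$ and the smooth maps $(\cos\theta_{i,\eps}, \sin\theta_{i,\eps})$ agree on overlaps, patching to a single $u_\eps \in C^\infty(B^\eps;\Suno)$ with $B^\eps := \bigcup_i U_i^\eps \nearrow B$. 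A direct pointwise computation yields $|\grad u - \grad u_\eps| \leq |u - u_\eps|\,|\grad \theta_i| + |\grad \theta_i - \grad \theta_{i,\eps}|$ on each $U_i^\eps$; the first term vanishes almost everywhere and is dominated by $2|\grad \theta_i| \in L^1$, hence tends to zero in $L^1$ by dominated convergence, while the second tends to zero by standard mollification. Finiteness of the cover gives $u_\eps \to u$ in $W^{1,1}_{\mathrm{loc}}(B;\R^2)$, and a Meyers-Serrin-style diagonal argument with varying mollifier radii produces a sequence $(u_k) \subset C^\infty(B;\Suno)$ converging to $u$ in $W^{1,1}(B;\R^2)$.

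The main obstacle is the local $W^{1,1}$ lifting from the condition $\mathrm{Det}\grad u = 0$. For $W^{1,p}$ maps into $\Suno$ with $p \geq 2$, Poincar\'e's lemma applies routinely to the integrable closed field $X$; for $p=1$, however, a generic lifting is only $BV$, and the vanishing of the distributional Jacobian is precisely the condition that suppresses the jump part of $\theta_U$. This is the content of the Bourgain-Brezis-Mironescu-type lifting results invoked from \cite{BMP,Miru}, and it underlies the analogous characterization in Theorem \ref{Brezis thm}.
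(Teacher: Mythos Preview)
The paper does not supply its own proof; it consists of a one-line citation to \cite[Theorem~1.5]{PVS}. Your forward implication is correct and standard.

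For the reverse implication your strategy---local $W^{1,1}$ lifting of $u$, mollification of the local phase, and patching of the maps $e^{i\theta_{i,\eps}}$ via the $2\pi\Z$ compatibility on overlaps---is natural, but the implementation has a genuine gap. You assert that a general bounded open connected $B\subset\R^2$ can be covered by \emph{finitely many} simply connected open subsets; this fails (consider a disk with a sequence of disjoint closed disks removed, accumulating at the boundary). On the other hand, your patching step genuinely requires a \emph{uniform} mollification radius: the identity $\theta_{i,\eps}-\theta_{j,\eps}\in 2\pi\Z$ on $U_i^\eps\cap U_j^\eps$ holds because both sides are convolutions, with the \emph{same} kernel, of phases differing by a locally constant $2\pi\Z$-valued function, and this breaks if different radii $\eps_i\neq\eps_j$ are used on different charts. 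Thus a uniform $\eps$ yields an approximant only on $B^\eps=\bigcup_i U_i^\eps\subsetneq B$, while the ``Meyers--Serrin-style diagonal argument with varying mollifier radii'' you invoke at the end is incompatible with the patching and does not produce a map in $C^\infty(B;\Suno)$. Bridging this---constructing, for each $k$, a smooth $\Suno$-valued map on \emph{all} of $B$ that is close to $u$ in $W^{1,1}(B)$, and not merely on an exhausting sequence of compacta---is precisely the nontrivial step handled in \cite{PVS}, and it does not follow from your sketch.
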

\begin{proof}
See \cite[Theorem 1.5]{PVS}.
\end{proof}

\section{Relaxation for vortex-type maps in $W^{1,p}(B_\radius;\Suno)$: Theorem \ref{main result 2}}\label{sec_varphi}
In this section we focus on maps $w\in W^{1,1}(B_\radius;\Suno)$ 
of the form \eqref{def_w}, 
where $\varphi:\Suno\rightarrow\Suno$ is a Lipschitz map. 

Of course $\det \grad w=0$ a.e. on $B_\radius$. Moreover, 
$w\in W^{1,p}(B_\radius;\Suno)$ for every $p\in[1,2)$; indeed, 
for $x\in B_\radius\setminus \{0\}$, let us write 
in polar coordinates 
\begin{equation}\label{def_f}
w(x)=\widetilde{w}(\rho,\theta)=\varphi(\cos\theta,\sin\theta)=:f(\theta)=(f_1(\theta),f_2(\theta))\qquad\forall \rho\in(0,\radius), \quad\forall\theta\in[0,2\pi).
\end{equation}
Then for a.e. $\theta\in[0,2\pi)$ and all $\rho\in(0,\radius)$
\begin{equation}\nonumber
\grad _{\rho,\theta}\widetilde 
w(\rho,\theta)=\left(\begin{matrix}
                                    0 &f'_1(\theta)\\
                                    0 &f'_2(\theta)
                                    \end{matrix}
                                    \right),
\qquad\qquad
|\grad _{\rho,\theta}\widetilde w(\rho,\theta)|=|\partial_\theta\widetilde{w}(\rho,\theta)|=|f'(\theta)|,
\end{equation}
\begin{equation}\label{variation of w}
\begin{aligned}
\int_{B_\radius}|\grad w|^pdx&=
\int_0^{2\pi}\int_0^\radius\rho\left({|\partial_\rho\widetilde{w}|^2+\frac{|\partial_\theta\widetilde{w}|^2}{\rho^2}}\right)^\frac{p}{2} d\rho d\theta
\\
&=\int_0^{2\pi}\int_0^\radius\frac{|f'(\theta)|^p}{\rho^{p-1}} d\rho d\theta\leq2\pi\mathrm{lip}(f)^p\int_0^\radius\frac{1}{\rho^{p-1}}d\rho<+\infty;
\end{aligned}
\end{equation}
in particular
\begin{equation}\label{variation of w_2}
\begin{aligned}
\int_{B_\radius}|\grad w|dx=\radius \int_0^{2\pi}{|f'(\theta)|} d\theta.
\end{aligned}
\end{equation}

\begin{Remark}
We have used that $f$ in \eqref{def_f} is 
Lipschitz continuous in $[0,2\pi)$. 
Let us  check that $\mathrm{lip}(f)=\mathrm{lip}(\varphi)$ and, 
moreover, $\mathrm{Var}(f):=\int_0^{2\pi}|f'(\theta)|d\theta=\int_{\Suno}|\nabla^{\Suno}\varphi(y)|d\mathcal H^1(y)
=\mathrm{Var}(\varphi)$, where
\begin{equation}\label{gradient of phi}
\nabla^{\Suno}\varphi(z):=\lim_{\substack{y\rightarrow z \\ y\in
\Suno \setminus\{z\}}}\frac{\varphi(y)-\varphi(z)}{|y-z|},
\end{equation}
is the (tangential) derivative of $\varphi$ 
on $\Suno$, 
that is well-defined for a.e. $z\in \Suno$ as an element of  the tangent space $T_{\varphi(z)}
\Suno$ to $\Suno$ at $\varphi(z)$. Fix  $y_0\in \Suno$ 
where $\varphi$ is differentiable, and take the unique $\theta_0\in[0,2\pi)$ such that $y_0=(\cos\theta_0,\sin\theta_0)$. From 
\eqref{gradient of phi}, it follows
\begin{equation}\label{gradient of phi equal f'}
\begin{aligned}
\nabla^{\Suno}\varphi(y_0)=\frac{d}{d\theta}_{|\theta=\theta_0}\varphi(\cos\theta,\sin\theta)=f'(\theta_0),
\end{aligned}
\end{equation}
and therefore $\mathrm{lip}(\varphi)=\mathrm{lip}(f)$.
Moreover
\begin{equation}\label{same variations}
\begin{aligned}
\mathrm{Var}(\varphi)=\int_{\Suno}|\nabla^{\Suno}\varphi(y)|d\mathcal H^1(y)=\int_0^{2\pi}|f'(\theta)|d\theta=\mathrm{Var}(f).
\end{aligned}
\end{equation}
In particular, from \eqref{variation of w_2}, we conclude
\begin{align}\label{variazione}
\int_{B_\radius} \vert \grad w\vert~dx=\radius \textrm{Var}(\varphi).
\end{align}
\end{Remark}

\begin{Remark}[\textbf{Lifting}]\label{lift}
A lifting of $\varphi$ is a map $\overline\Phi:[0,2\pi]\rightarrow\R$ such that 
\begin{equation}\label{lifting relation}
	\varphi(\cos\theta,\sin\theta)=(\cos(\overline\lift(\theta)),\sin(\overline\lift(\theta))) \qquad\forall\theta\in[0,2\pi].
\end{equation}
The function $f(\cdot)=\varphi(\cos(\cdot),\sin(\cdot)):[0,2\pi]\rightarrow \Suno$ being continuous on a simply-connected set, always admits a continuous lifting $\overline\Phi:[0,2\pi]\rightarrow\R$ such that 
$$\varphi(\cos\theta,\sin\theta)=f(\theta)=(\cos(\overline\lift(\theta)),\sin(\overline\lift(\theta))).$$ Moreover, since the covering map $t \in \R\mapsto e^{it}\in \Suno$ satisfies $|e^{it_1}-e^{it_2}|\leq |t_1-t_2|\leq \pi|e^{it_1}-e^{it_2}|$ for all $t_1,t_2$ with $|t_1-t_2|\leq\pi$, 
any continuous lifting of $\varphi$ must be Lipschitz, indeed
\begin{align}
	\frac{|\overline\Phi(\theta_1)-\overline\Phi(\theta_2)|}{|\theta_1-\theta_2|}\leq 
\pi
\frac{|e^{i\overline\Phi(\theta_1)}-e^{i\overline\Phi(\theta_2)}|}{|e^{i\theta_1}-e^{i\theta_2}|}
=
\pi
\frac{|\varphi(e^{i\theta_1})-\varphi(e^{i\theta_2})|}{|e^{i\theta_1}-e^{i\theta_2}|}
\qquad \forall \theta_1,\theta_2\in [0,2\pi] \text{ with }|\theta_1-\theta_2|\leq \pi;
\end{align}
while if $\vert \theta_1-\theta_2\vert > \pi$, the left-hand side
is bounded by $\frac{2}{\pi} \max_{[0,2\pi]} \vert \overline \Phi \vert$.

  Using the $2\pi$-periodicity of $f$, 
we see that $\overline\Phi(2\pi)-\overline\Phi(0)\in2\pi\mathbb Z$; hence $\overline\Phi$ can be extended in a Lipschitz way to the whole of $\R$ 
(this can be done extending periodically its first derivative).
It is possible to see that the lifting is unique up to a multiple of
$2\pi$: fix a starting point, e.g. $(1,0)\in\Suno$ and set
$\varphi(1,0)=:y_0\in\Suno$.
Now extract the Argument $\theta(y_0)\in[0,2\pi)$
of $y_0$, and 
define 
$\lift:\R\rightarrow\R$ as
\begin{align}\label{39}
\lift(t):=\theta(y_0)+\int_0^t\lambda_\varphi(s)ds,
\end{align}
 where $\lambda_\varphi(s)\in \R$ is uniquely determined by
\begin{align}\label{40}
\nabla^{\Suno}\varphi(\cos s,\sin s)=\lambda_\varphi(s) \tau_{\varphi(\cos s,\sin s)}\qquad \mbox{a.e. }s\in\R,
\end{align}
with 
\begin{equation}\label{41}
\tau_{\varphi(\cos s,\sin s)}=\varphi^\perp(\cos s,\sin s)=\big(-\varphi_2(\cos s,\sin s),\varphi_1(\cos s,\sin s)\big)
\end{equation}
the unit tangent vector 
to $\Suno$ (counter-clockwise oriented)
at the point ${\varphi(\cos s,\sin s)}$.
By definition, $\lift$ is Lipschitz in $\R$ since
$\mathrm{lip}(\lift)=\|\lambda_\varphi\|_{\infty}=\mathrm{lip}(\varphi)$.
In order to show the lifting property 
\eqref{lifting relation}, take a lifting $\overline\Phi:\R\rightarrow\R$ of $\varphi$. 
 Differentiating the equality
$\varphi(\cos s,\sin s)=(\cos(\overline\Phi(s)),\sin(\overline\Phi(s)))$	
gives $$\lambda_\varphi(s)\tau_{\varphi(\cos s,\sin s)}
=\overline\Phi'(s)(-\sin(\overline\Phi(s)),\cos(\overline\Phi(s)))
=\overline\Phi'(s) \tau_{\varphi(\cos s,\sin s)},\qquad \mbox{a.e. }s\in\R,$$
so that $\overline\Phi'=\lambda_\varphi$ a.e. in $\R$. This implies, by \eqref{39}, that $\Phi(t)-\overline\Phi(t)$ is a constant multiple of $2\pi$. Thus $\Phi$ also satisfies 
\eqref{lifting relation}, and any lifting of $\varphi$ is of the form \eqref{39}, up to a constant multiple of $2\pi$.

As a further consequence of the previous discussion and of \eqref{40}-\eqref{41}, for any lifting $\widetilde \Phi$ of $\varphi$, and in particular for $\Phi$, the map $\widetilde f(\theta)=(\cos(\widetilde\Phi(\theta)),\sin(\widetilde\Phi(\theta)))$ satisfies the same 
linear ordinary differential system as $f$, namely 
\begin{equation}\label{lifting system}
f'_1 =-\lift'{f_2},\qquad
f'_2 =\lift'{f_1} \qquad {\rm a.e.~in}~ \R.
\end{equation}

Finally, from \eqref{lifting system} it follows
$\lambda_\varphi=
f_1 f_2' -f_2f_1'$ a.e. in $\R$,
so that by \eqref{eq:grado}, we get
\begin{equation}\label{lifting and degree}
\begin{aligned}
\lift(2\pi)=\lift(0)+\int_0^{2\pi}\lambda_\varphi(\theta)d\theta=\lift(0)+2\pi\grado(\varphi).
\end{aligned}
\end{equation}
\end{Remark}

Now we can start the proof of Theorem \ref{main result 2}: 
As usual, we divide it into 
two parts, the lower bound
(Proposition \ref{lower bound2}) and the upper bound 
(Proposition \ref{upper bound2}).


\begin{Proposition}[\textbf{Lower bound}]\label{lower bound2}
	Let $w:B_\radius \setminus \{0\}\rightarrow
\Suno$ be the map defined in \eqref{def_w}. Suppose that $(v_k)\subset C^1(B_\radius;\R^2)\cap BV(B_\radius;\R^2)$ is such that $v_k\rightarrow w$ strictly $BV(B_\radius;\R^2)$. Then
	$$
	\liminf_{k\rightarrow+\infty}\mathcal{A}(v_k;B_\radius)\geq\int_{B_\radius}\sqrt{1+|\grad w|^2}dx + \pi|\grado(\varphi)|.
	$$
\end{Proposition}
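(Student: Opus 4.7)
The plan is to localize around the origin and split the area into an annular piece, which recovers the absolutely continuous part of the relaxed area by lower semicontinuity, and a Jacobian piece on a small disc $B_\rho$, which will supply the term $\pi|\grado(\varphi)|$ via a degree argument. Concretely, for $\rho\in(0,\radius)$, the elementary bounds $\sqrt{1+|p|^2+q^2}\geq\sqrt{1+|p|^2}$ and $\sqrt{1+|p|^2+q^2}\geq|q|$ give
\begin{equation*}
\mathcal{A}(v_k;B_\radius)\;\geq\;\int_{B_\radius\setminus\overline{B_\rho}}\sqrt{1+|\grad v_k|^2}\,dx+\int_{B_\rho}|\det\grad v_k|\,dx.
\end{equation*}
The classical $L^1$-lower semicontinuity on $BV$ of $v\mapsto\int\sqrt{1+|\grad v|^2}\,dx+|D^sv|$ then yields $\liminf_k\int_{B_\radius\setminus\overline{B_\rho}}\sqrt{1+|\grad v_k|^2}\,dx\geq\int_{B_\radius\setminus\overline{B_\rho}}\sqrt{1+|\grad w|^2}\,dx$, since $w$ is smooth on $B_\radius\setminus\{0\}$ so its singular part vanishes on the annulus. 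Letting $\rho\to 0$ at the end will recover $\int_{B_\radius}\sqrt{1+|\grad w|^2}\,dx$ by monotone convergence, using the integrability established in \eqref{variation of w}.

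The core of the proof is the inequality $\liminf_k\int_{B_\rho}|\det\grad v_k|\,dx\geq\pi|\grado(\varphi)|$ for a suitable $\rho$, which I would obtain by slicing and degree. A polar computation on the ansatz \eqref{def_w} shows $|\grad w|=|\partial_s w|$ pointwise, so by Fubini
\begin{equation*}
\int_0^\radius |D(w\mres\partial B_\rho)|(\partial B_\rho)\,d\rho=|Dw|(B_\radius),\qquad \int_0^\radius |D(v_k\mres\partial B_\rho)|(\partial B_\rho)\,d\rho\leq|Dv_k|(B_\radius).
\end{equation*}
The strict $BV$-convergence $|Dv_k|(B_\radius)\to|Dw|(B_\radius)$ forces equality in the limit. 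Combined with slicewise $L^1$-convergence (Fubini), slicewise lower semicontinuity of the one-dimensional $BV$-norm, and the elementary fact that $f_k\geq 0$ with $\liminf f_k\geq f$ a.e.\ and $\int f_k\to \int f$ implies $f_k\to f$ in $L^1$, one obtains (up to a subsequence) that $v_k\mres\partial B_\rho\to w\mres\partial B_\rho$ strictly in $BV(\partial B_\rho)$ for a.e.\ $\rho$. Proposition \ref{strict implies uniform}, applied to a $2\pi$-periodic parameterization of $\partial B_\rho$, then upgrades this to uniform convergence.

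Since $w\mres\partial B_\rho$ takes values in $\Suno$, uniform convergence forces $v_k(\partial B_\rho)\subset\{1-\varepsilon<|y|<1+\varepsilon\}$ for every $\varepsilon>0$ and $k$ large, and the closed curve $v_k\mres\partial B_\rho$ is homotopic to $\varphi$ in $\R^2\setminus B_{1-\varepsilon}$. Hence its Brouwer winding number around any $y\in B_{1-\varepsilon}$ equals $\grado(\varphi)$, which for $v_k\in C^1(B_\rho;\R^2)$ coincides with the analytic degree $\grado(v_k,B_\rho,y)$ of \eqref{eq:deg_mult}. Applying \eqref{213},
\begin{equation*}
\int_{B_\rho}|\det\grad v_k|\,dx\;\geq\;\int_{\R^2}|\grado(v_k,B_\rho,y)|\,dy\;\geq\;\pi(1-\varepsilon)^2|\grado(\varphi)|,
\end{equation*}
and letting $\varepsilon\to 0$ closes the Jacobian lower bound. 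Combining the two estimates via $\liminf(a_k+b_k)\geq\liminf a_k+\liminf b_k$ and letting $\rho\to 0$ along good radii finishes the proof. The main obstacle I foresee is the slicing step: strict $BV$-convergence on $B_\radius$ does not in general descend to strict $BV$-convergence on arbitrary slices, and here it works only thanks to the special structure of $w$ (which makes $\partial_\rho w\equiv 0$) combined with the one-dimensionality of the slices---crucial for invoking Proposition \ref{strict implies uniform} in view of Remark \ref{rem1}.
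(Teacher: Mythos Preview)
Your proposal is correct and follows the same overall architecture as the paper: localize around the origin, split $\mathcal{A}(v_k;B_\radius)$ into an annular piece handled by $L^1$-lower semicontinuity and a Jacobian piece on a small disc, obtain strict $BV$-convergence on circles from the identity $|\grad w|=|\partial_s w|$ together with Fubini and Fatou, and upgrade to uniform convergence on $\partial B_\rho$ via Proposition~\ref{strict implies uniform}.

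The genuine difference is in how you extract $\pi|\grado(\varphi)|$ from $\int_{B_\rho}|\det\grad v_k|\,dx$. The paper builds an auxiliary Lipschitz map $w_h$ on $B_\radius$ that equals $v_h$ on $B_{r_\varepsilon}$ and interpolates linearly to $w$ on the annulus $B_\radius\setminus B_{r_\varepsilon}$, so that $w_h=w$ (hence exactly $\Suno$-valued) on $\partial B_\radius$; it then invokes \eqref{stima_jacob_grado} directly and shows, via explicit computation, that $\int_{B_\radius\setminus B_{r_\varepsilon}}|Jw_h|\,dx\to 0$. You bypass this construction entirely: uniform convergence on $\partial B_\rho$ confines $v_k(\partial B_\rho)$ to the annulus $\{1-\varepsilon<|y|<1+\varepsilon\}$, the winding number of $v_k\mres\partial B_\rho$ around any $y\in B_{1-\varepsilon}$ equals $\grado(\varphi)$ by homotopy invariance, and \eqref{213} gives the bound with a harmless factor $(1-\varepsilon)^2$. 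Your route is shorter and more elementary for this proposition. The paper's interpolation device, however, is what gets reused in Section~\ref{sec_general} (Proposition~\ref{lower bound TVBV}), where the trace $u\mres\partial B_{r_\varepsilon}$ is merely $W^{1,1}$ rather than Lipschitz and a smooth approximant $u_h$ of the trace is needed on the outer boundary; your direct argument would also work there, but the paper's construction makes the generalization transparent. Your slicing step is also slightly sharper than the paper's: you obtain strict convergence on a.e.\ circle along one subsequence (via the $L^1$-convergence lemma you state), whereas the paper extracts a radius $r_\varepsilon\in(0,\varepsilon)$ and a subsequence depending on $\varepsilon$.
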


\begin{proof}
We may assume that 
$$
\liminf_{k\rightarrow+\infty}\mathcal{A}(v_k;B_\radius)=\lim_{k\rightarrow+\infty}\mathcal{A}(v_k;B_\radius) < +\infty.
$$
We define the functions $\sliceTV_k,
\sliceTV:(0,\radius)\rightarrow[0,+\infty)$ as  
$$
\sliceTV_k(r):=\int_{\partial B_r}|\grad v_k|ds,\quad 
\sliceTV(r):=\liminf_{k\rightarrow +\infty}\sliceTV_k(r),
\qquad
r\in(0,\radius),
$$
where $s$ is an arc length parameter on $\partial B_r$. 
By Fubini's theorem it follows
$$
\int_0^\radius\sliceTV_k(r)dr=\int_{ B_\radius}|\grad v_k|dx,
$$
hence, using Fatou's lemma, the 
strict convergence of $(v_k)$ to $w$, and \eqref{variazione}, 
\begin{equation}\label{integral of phi 2}
\begin{aligned}
\int_0^\radius\sliceTV(r)dr
&\leq
\liminf_{k\rightarrow+\infty}\int_0^\radius\sliceTV_k(r)dr
=\lim_{k\rightarrow+\infty}\int_{ B_\radius}|\grad v_k|dx\\
&=\int_{B_\radius}|\grad w|dx=\radius\mathrm{Var}(\varphi).
\end{aligned}
\end{equation}
 In particular, 
$$
\sliceTV {\rm ~is~ almost~ everywhere~
finite ~in~ }  (0,\radius).
$$
Now we claim that 
\begin{equation}\label{claimw}
\sliceTV=\mathrm{Var}(\varphi)\quad \mbox{ a.e. in } (0,\radius).
\end{equation}
Indeed, without loss of generality we may 
assume that $(v_k)$ converges to $w$ almost everywhere in $B_\radius$, so 
that for almost every $r\in(0,\radius)$ 
\begin{equation}\label{a.e. convergence on circumferencesw}
v_k\mres\partial B_r\rightarrow w\mres \partial B_r\qquad \mathscr{H}^1-\mbox{a.e. in } \partial B_r.
\end{equation}
Now fix $r\in(0,\radius)$ such that \eqref{a.e. convergence on circumferencesw} holds; consider the total variation of $v_k\mres \partial B_r$, that is the $L^1(\partial B_r)$-norm of the 
tangential derivative of $v_k$ (as in \eqref{gradient of phi}):
$$
|D(v_k\mres\partial B_r)|(\partial B_r)=\int_{\partial B_r}\left|\frac{\partial v_k}{\partial s}\right|ds.
$$
Clearly
\begin{align}\label{4.14new}
\liminf_{k\rightarrow+\infty}\int_{\partial B_r}\left|\frac{\partial v_k}{\partial s}\right|ds\leq\liminf_{k\rightarrow+\infty}\int_{\partial B_r}|\grad v_k|ds=\sliceTV(r).
\end{align}
Let us 
extract a subsequence $(v_{k_h})\subset(v_k)$ depending on $r$, such that
\begin{equation}\label{sequence realizing liminfw}
\begin{aligned}
\liminf_{k\rightarrow+\infty}\int_{\partial B_r}\left|\frac{\partial v_k}{\partial s}\right|ds=\lim_{h\rightarrow+\infty}\int_{\partial B_r}\left|\frac{\partial v_{k_h}}{\partial s}\right|ds.
\end{aligned}
\end{equation}
Since $\sliceTV$ is almost everywhere finite, we may suppose that $\psi(r)<+\infty$, so that  the sequence $(v_{k_h}\mres\partial B_r)$ is bounded in $BV(\partial B_r;\R^2)$.
Thus, using \eqref{a.e. convergence on circumferencesw}, 
we also have
\begin{equation}\label{eq:4.141}
v_{k_h}\mres \partial B_r\,{\rightharpoonup}\,
w\mres \partial B_r \quad\mbox{ weakly}^* \mbox{ in } BV(\partial B_r; \R^2)\quad\mbox{as }h\rightarrow+\infty.
\end{equation} 
Now, since $\grad w$ is only tangential, and 
$\vert \grad w(r,\theta)\vert^2 = \frac{\vert f'(\theta)\vert^2}{r^2}$,
we get
\begin{equation}
\label{eq:Dw_only_tangential}
\int_{\partial B_r}\left|\frac{\partial w}{\partial s}\right|ds=\int_{\partial B_r}\left|{\grad w}\right|ds=\int_0^{2\pi}r|f'(\theta)|\frac{1}{r}d\theta=\mathrm{Var}(\varphi).
\end{equation}
Hence, using the lower semicontinuity of the variation along
$(v_{k_h}\mres \partial B_r)$, 
\eqref{sequence realizing liminfw}, and
 \eqref{4.14new} we infer
\begin{equation}\label{upper bound for phiw}
\begin{aligned}
\mathrm{Var}(\varphi)=&\int_{\partial B_r}\left|\frac{\partial w}{\partial s}\right|ds\leq\liminf_{h\rightarrow+\infty}\int_{\partial B_r}\left|\frac{\partial v_{k_h}}{\partial s}\right|ds\\
=&\lim_{h\rightarrow+\infty}\int_{\partial B_r}\left|\frac{\partial v_{k_h}}{\partial s}\right|ds=\liminf_{k\rightarrow+\infty}
\int_{\partial B_r}\left|\frac{\partial v_{k}}{\partial s}\right|ds
\leq
\sliceTV(r).
\end{aligned}
\end{equation}
Thus
  $\sliceTV\geq \mathrm{Var}(\varphi)$ almost everywhere 
in $(0,\radius)$ and, from \eqref{integral of phi 2},
we deduce $\sliceTV = \mathrm{Var}(\varphi)$ almost everywhere 
in $(0,\radius)$, and so \eqref{claimw} is proved.

As a consequence of the previous arguments, 
\begin{equation}
\label{strict convergence on circumferencesw}
\begin{aligned}
&\forall \varepsilon\in(0,\radius) \quad\exists r_\varepsilon\in(0,\varepsilon) \quad\exists(v_{k_h})\subset(v_k) \quad\mbox{s.t.}\\
& v_{k_h}\mres\partial B_{r_\varepsilon}\rightarrow w\mres\partial B_{r_\varepsilon}\quad\mbox{strictly } BV(\partial B_{r_\varepsilon};\R^2),
\end{aligned}
\end{equation}
where the subsequence $(v_{k_h})$ depends on $\eps$.
Indeed, proving \eqref{claimw}, we have shown that 
for almost every $r\in(0,\radius)$, there exists a 
subsequence $(v_{k_h})$ satisfying \eqref{eq:4.141};
so, given $\varepsilon\in(0,\radius)$,
there exists $r_\eps\in(0,\varepsilon)$ and a subsequence $(v_{k_h})$ depending on $\eps$, such that
\begin{equation}\label{conv_radius}
v_{k_h}\mres\partial B_{r_\varepsilon}{\rightharpoonup}  w\mres\partial B_{r_\varepsilon}\quad\mbox{ weakly}^* \mbox{ in } BV(\partial B_{r_\varepsilon};\R^2).
\end{equation}
But from the previous discussion
we also deduce
\begin{equation}\label{eq:lim_restriction_totvar}
\lim_{h\rightarrow+\infty}\int_{\partial B_{r_\varepsilon}}\left|\frac{\partial v_{k_h}}{\partial s}\right|ds=\sliceTV(r_\eps)=\mathrm{Var}(\varphi)=\int_{\partial B_{r_\varepsilon}}\left|\frac{\partial w}{\partial s}\right|ds;
\end{equation}
thus the convergence in \eqref{conv_radius} is actually strict in $BV(\partial B_{r_\eps}; \R^2)$.

Now, fix $\eps\in(0,\radius)$ and, for simplicity,
denote by $(v_h)$ the subsequence $(v_{k_h})$ 
for which \eqref{strict convergence on circumferencesw} holds. 
Remember that our approximating maps $v_h=((v_{h})_1,(v_{h})_2)$ are of 
class $C^1(\Om; \R^2)$, 
so they might have non-zero Jacobian determinant $J v_h := {\rm det}\grad v_h$, 
as opposed to $w=(w_1,w_2)$, whose Jacobian determinant vanishes 
a.e. in $B_\radius$. In particular, we expect the contribution of area given by $Jv_h$ to be non trivial around the origin. Thus, we 
split the area functional as follows:
$$
\mathcal{A}(v_h;B_\radius)=\mathcal{A}(v_h;B_\radius\setminus B_{r_\varepsilon})+\mathcal{A}(v_h;B_{r_\varepsilon})\geq\mathcal{A}(v_h;B_\radius\setminus B_{r_\varepsilon})+\int_{B_{r_\epsilon}}|Jv_h|dx,
$$
and notice that, by definition of relaxed functional and \cite[Theorem 3.7]{AD}, 
$$
\liminf_{h\rightarrow+\infty}\mathcal{A}(v_h;B_\radius\setminus B_{r_\varepsilon})\geq\overline{\mathcal{A}}_{L^1}(u;B_\radius\setminus B_{r_\varepsilon})\geq\int_{B_\radius\setminus B_{r_\varepsilon}}\sqrt{1+|\grad w|^2}dx.
$$
Hence
\begin{equation}\label{splitted areaw}
\begin{aligned}
\lim_{h\rightarrow+\infty}{\mathcal{A}(v_h;B_\radius)}&\geq\liminf_{h\rightarrow+\infty}\mathcal{A}(v_h;B_\radius\setminus B_{r_\varepsilon})+\liminf_{h\rightarrow+\infty}\int_{B_{r_\epsilon}}|Jv_h|~dx
\\
&\geq\int_{B_\radius\setminus B_{r_\varepsilon}}\sqrt{1+|\grad w|^2}dx+\liminf_{h\rightarrow+\infty}\int_{B_{r_\epsilon}}|Jv_h|~dx.
\end{aligned}
\end{equation}
To conclude the proof it is then sufficient to show that 
 \begin{equation}\label{singular contribution estimate 2}
\liminf_{h\rightarrow+\infty}\int_{B_{r_\varepsilon}}|Jv_h|dx\geq\pi|
\grado(\varphi)|.
\end{equation}
	
	\noindent
	Define the sequence $w_h:B_\radius\rightarrow\R^2$ as
	\begin{equation}
	w_h(x):=\left\{
	\begin{aligned}
	&v_h(x)  &\quad&\mbox{ if }|x|\leq r_\eps\\
	&\frac{\radius-|x|}{\radius-r_\varepsilon}v_h\left(r_\eps\frac{x}{|x|}\right)+\frac{|x|-r_\eps}{\radius-r_\eps}w\left(r_\eps\frac{x}{|x|}\right)  &&\mbox{ if }r_\eps<|x| < \radius.
	\end{aligned}
	\right.
	\end{equation}
	Then $w_h$ is Lipschitz continuous and interpolates 
$v_h\mres\partial B_{r_\eps}$ and $w\mres\partial B_{r_\eps}$  
in the annulus enclosed by $\partial B_{r_\eps}$ and $\partial B_\radius$.
	Now we show that 
	\begin{equation}\label{contributo regolare di w_k}
	\lim_{h\rightarrow+\infty}\int_{B_\radius\setminus B_{r_\eps}}|Jw_h|dx=0.
	\end{equation}
	Indeed, passing to polar coordinates in ${B_\radius\setminus B_{r_\eps}}$:
\begin{equation}\nonumber
	 w_h(x)=\widetilde{w}_h(\rho,\theta)=
\frac{\radius-\rho}{\radius-r_\varepsilon}\widetilde{v}_h(r_\eps,\theta)+\frac{\rho-r_\eps}{\radius-r_\eps}\widetilde{w}(r_\eps,\theta),
\end{equation}
where
$$
\widetilde{v}_h(r_\eps,\theta):=v_h\left(r_\eps(\cos\theta,\sin\theta)\right))
=((\widetilde{v}_{h})_1(r_\eps,\theta),(\widetilde{v}_{h})_2(r_\eps,\theta)),\quad\widetilde{w}(r_\eps,\theta):=w\left(r_\eps(\cos\theta,\sin\theta)\right)=f(\theta).$$
Making use of \eqref{def_f} and \eqref{lifting system}, we get
	\begin{equation}
	\grad \widetilde{w}_h(\rho,\theta)=
\frac{1}{\radius-r_\eps}
	\begin{pmatrix}
	&-
(\widetilde{v}_{h})_1+f_1   
&(\radius-\rho)\partial_\theta (\widetilde{v}_{h})_1-
(\rho-r_\eps)\Phi' f_2
\\
	&-(\widetilde{v}_{h})_2+f_2
   &(\radius-\rho)\partial_\theta (\widetilde{v}_{h})_2+(\rho-r_\eps)
\Phi'f_1\\
	\end{pmatrix},
	\end{equation}
	where $(\widetilde{v}_{h})_i$, $\partial_\theta (\widetilde{v}_{h})_i$ 
are evaluated at $(r_\eps,\theta)$ for $i=1,2$, and 
$f_1, f_2, \Phi'$ are evaluated at $\theta$.
	Then we can compute the Jacobian determinant of $w_h$ 
in polar coordinates:
	\begin{equation}\nonumber
	\begin{aligned}
	J\widetilde{w}_h(\rho,\theta)
=& 
\frac{1}{(\radius-r_\eps)^2}
\Big[
(\radius-\rho)
\Big\{
(\widetilde{v}_{h})_2\partial_\theta (\widetilde{v}_{h})_1
-\partial_\theta (\widetilde v_{h})_1 f_2
\Big\}
\\
& +(\radius-\rho)
\Big\{\partial_\theta (\widetilde{v}_{h})_2f_1
-(\widetilde{v}_h)_1\partial_\theta (\widetilde{v}_{h})_2
\Big\}
-
(\rho-r_\eps)\Phi'
\Big\{(\widetilde{v}_{h})_1f_1
+(\widetilde{v}_{h})_2f_2-1\Big\}\Big],
\end{aligned}
\end{equation}
where we use also that $f_1^2+f_2^2=1.$
	Thus
	\begin{equation}\label{Jacobian estimate}
	\begin{aligned}
	\int_{B_\radius\setminus B_{r_\eps}}|Jw_h|dx
= &
\int_{r_\eps}^\radius\int_0^{2\pi}|J\widetilde{w}_h|d\rho d\theta
\\
	\leq & C_{\radius,\eps}\int_{r_\eps}^\radius\int_0^{2\pi}|(\widetilde{v}_{h})_2\partial_\theta (\widetilde{v}_{h})_1-\partial_\theta (\widetilde{v}_{h})_1f_2|d\rho d\theta
\\
	&+C_{\radius,\eps}\int_{r_\eps}^\radius\int_0^{2\pi}|(\widetilde{v}_{h})_1\partial_\theta (\widetilde{v}_{h})_2-\partial_\theta (\widetilde{v}_{h})_2f_1|d\rho d\theta
\\
	&+C_{\radius,\eps}\mathrm{lip}(\Phi)\int_{r_\eps}^\radius\int_0^{2\pi}|(\widetilde{v}_{h})_1f_1+(\widetilde{v}_{h})_2f_2
-1|d\rho d\theta,
	\end{aligned}
	\end{equation}
	where $C_{\radius,\eps}$ is a positive constant depending only on $\radius$ and $\eps$.
	Consider the first integral on the right hand side of \eqref{Jacobian estimate}: its integrand is independent of $\rho$, and so 
	\begin{equation}\nonumber
	\begin{aligned}
	 \int_{r_\eps}^\radius\int_0^{2\pi}\left|(\widetilde{v}_{h})_2
\partial_\theta (\widetilde{v}_{h})_1-\partial_\theta (\widetilde{v}_{h})_1f_2(\theta)\right|d\rho d\theta&=(\radius-r_\eps) \int_0^{2\pi}\left|(\widetilde{v}_{h})_2(r_\eps,\theta)-f_2(\theta)\right|\left|\partial_\theta (\widetilde{v}_{h})_1(r_\eps,\theta)\right| d\theta\\
	&\leq C_{\radius,\eps}\|({v}_{h})_2-w_2\|_{L^\infty(\partial B_{r_\eps})}\int_{\partial B_{r_\eps}}\left|\frac{\partial v_h}{\partial s}\right|ds\xrightarrow{k\rightarrow+\infty}0,
	\end{aligned}
	\end{equation}
	where in 
 passing to the limit we used \eqref{strict convergence on circumferencesw}, which implies that the variation of $v_h$ on $\partial B_{r_\eps}$ is 
necessarily equi-bounded and, together with Proposition \ref{strict implies uniform}, that $v_h\rightarrow w$ 
uniformly on $\partial B_{r_\eps}$. For the second integral, the argument is similar. \\
	As for the third one, by the uniform convergence of $(v_h)$ 
to $w$ on $\partial B_{r_\eps}$, we can pass to the limit under the  
integral sign:
	\begin{equation*}
	\int_{r_\eps}^\radius\int_0^{2\pi}|(\widetilde{v}_{h})_1
f_1+(\widetilde{v}_{h})_2f_2-1|d\rho d\theta
	\xrightarrow{h\rightarrow+\infty} 
\int_{r_\eps}^\radius\int_0^{2\pi}|f_1^2+f_2^2-1|d\rho d\theta=0.
	\end{equation*}
	Therefore, \eqref{contributo regolare di w_k} holds. 

Now, we write the Jacobian determinant of $v_h$ on $B_{r_\eps}$ 
in the following way:
	\begin{equation}\label{splitted integral}
	\begin{aligned}
	\int_{B_{r_\varepsilon}}|Jv_h|dx=\int_{B_\radius}|Jw_h|dx-\int_{B_\radius\setminus B_{r_\varepsilon}}|Jw_h|dx.
	\end{aligned}
	\end{equation}
Notice that $w_h=w$ on $\partial B_\radius$, so that (see 
Remarks \ref{rem:too_weak} and \ref{rem:coincidence_of_degrees}) 
\begin{equation}\label{eq:deg_deg_deg}
\mathrm{deg}(w_h,\partial B_\radius)=\mathrm{deg}(w,\partial B_\radius)=\mathrm{deg}(\varphi).
\end{equation}
We may suppose that $v_h$ takes values in $\overline{B}_1$, 
since the limit function $w$ is valued in $\Suno$ (see \cite[Lemma 3.3]{AD}). So $w_h:\overline{B}_\radius\rightarrow \overline{B}_1$ 
is Lipschitz continuous and maps $\partial B_\radius$ into $\partial B_1$.
Then, by \eqref{eq:deg_deg_deg} and \eqref{stima_jacob_grado}, we have
	\begin{equation}\label{degree inequality}
	\begin{aligned}
	\int_{B_\radius}|Jw_h|dx\geq
	\pi|\mathrm{deg}(w,\partial B_\radius)|=\pi|\mathrm{deg}(\varphi)|.
    \end{aligned}
	\end{equation}
	Finally, passing to the lower limit as $h\rightarrow+\infty$ in \eqref{splitted integral}, using \eqref{contributo regolare di w_k} and the previous inequality, 
	we deduce estimate \eqref{singular contribution estimate 2}, 
which concludes the proof.
\end{proof}

\begin{Proposition}[\textbf{Upper bound}]\label{upper bound2}
	Let $w:B_\radius\setminus\{0\}\rightarrow\R^2$ be 
the map defined in  \eqref{def_w}. Then there exists a sequence $(v_k)\subset C^1(B_\radius;\R^2)\cap BV(B_\radius;\R^2)$ such that
	$v_k\rightarrow w$ strictly $BV(B_\radius;\R^2)$ and 
\begin{equation}\label{recover}
\limsup_{k\rightarrow+\infty}\mathcal{A}(v_k;B_\radius)\leq\int_{B_\radius}\sqrt{1+|\grad w|^2}dx + \pi|\grado(\varphi)|.
\end{equation}
\end{Proposition}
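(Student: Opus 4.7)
The approach is to extract a recovery sequence for $\overline{\mathcal{A}}_{BV}$ from a recovery sequence for $\totvarjac_{W^{1,1}}(w;B_\radius)$, exploiting the fact that $W^{1,1}$-convergence is strictly stronger than strict $BV$-convergence, together with the pointwise subadditivity
$$\sqrt{1+|\grad v|^2+(\det\grad v)^2}\leq \sqrt{1+|\grad v|^2}+|\det\grad v|$$
of the area integrand (which follows from $\sqrt{a+b}\leq\sqrt{a}+\sqrt{b}$ for $a,b\geq 0$).

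First, since $w\in W^{1,1}(B_\radius;\Suno)$ has a single topological singularity at the origin with $\textrm{Det}\grad w=\pi\grado(\varphi)\delta_0$, Theorem \ref{Brezis thm} gives $\totvarjac_{W^{1,1}}(w;B_\radius)=\pi|\grado(\varphi)|$. By the very definition of $\totvarjac_{W^{1,1}}$ as a relaxation (after passing to a subsequence to replace $\liminf$ by $\lim$), there exists $(v_k)\subset C^1(B_\radius;\R^2)\cap W^{1,1}(B_\radius;\R^2)$ with $v_k\to w$ in $W^{1,1}(B_\radius;\R^2)$ and
$$\lim_{k\to\infty}\int_{B_\radius}|\det\grad v_k|\,dx=\pi|\grado(\varphi)|.$$
The $W^{1,1}$-convergence $v_k\to w$ implies $|\grad v_k|\to |\grad w|$ in $L^1(B_\radius)$; hence $v_k\to w$ strictly in $BV(B_\radius;\R^2)$ (so the sequence is admissible in \eqref{relaxed BV}), and using the elementary estimate $|\sqrt{1+|A|^2}-\sqrt{1+|B|^2}|\leq |A-B|$ for $A,B\in\R^{2\times 2}$ we also obtain
$$\lim_{k\to\infty}\int_{B_\radius}\sqrt{1+|\grad v_k|^2}\,dx=\int_{B_\radius}\sqrt{1+|\grad w|^2}\,dx.$$

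Applying the subadditivity inequality above to $v_k$ gives $\mathcal{A}(v_k;B_\radius)\leq \int_{B_\radius}\sqrt{1+|\grad v_k|^2}\,dx+\int_{B_\radius}|\det\grad v_k|\,dx$, and taking the limsup in $k$ together with the two limits just established produces exactly \eqref{recover}. No substantial obstacle arises in this strategy once Theorem \ref{Brezis thm} is invoked; the only conceptual points worth highlighting are the compatibility between the two relaxation frameworks (namely, that a $W^{1,1}$-recovery sequence for $\totvarjac$ is automatically a strict-$BV$-recovery sequence for $\mathcal{A}$), and the fact that the singular contribution to the area, which is generally delicate, here reduces precisely to the total variation of the Jacobian because $w$ takes values in $\Suno$ and is smooth away from a single point.
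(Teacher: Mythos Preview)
Your argument is correct. The only step that deserves a line of justification is the identity $\mathrm{Det}\nabla w=\pi\,\grado(\varphi)\,\delta_0$: since $w$ is locally Lipschitz and $\Suno$-valued on $B_\radius\setminus\{0\}$, the distribution $\mathrm{Det}\nabla w$ is supported at the origin, and its mass is computed by \eqref{eq:grado} on any circle $\partial B_r$; alternatively one may cite \cite{Pa} or \cite{DP}. Once this is granted, Theorem~\ref{Brezis thm} gives the recovery sequence for $\totvarjac_{W^{1,1}}$, and the rest follows as you wrote.

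Your route, however, is genuinely different from the paper's. The paper builds $(v_k)$ explicitly: on a shrinking disc $B_{\radius/k}$ it uses the cone $\frac{k}{\radius}|x|\,\overline\varphi(x/|x|)$ over a standard degree-$d$ map $\overline\varphi(\theta)=e^{id\theta}$, on the annulus $B_{2\radius/k}\setminus B_{\radius/k}$ it inserts a Lipschitz homotopy between $\overline\varphi$ and $\varphi$ (via interpolation of liftings), and outside it sets $v_k=w$. It then verifies by direct computation that $\int_{B_\radius}|\det\nabla v_k|=\pi|d|$ for every $k$ and that $v_k\to w$ in $W^{1,p}$ for all $p\in[1,2)$. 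Your approach trades this construction for an appeal to Theorem~\ref{Brezis thm}, which makes the proof much shorter and transparently shows why the singular term is exactly $|\mathrm{Det}\nabla w|(B_\radius)$; it also immediately suggests the extension to general $u\in W^{1,1}(\Omega;\Suno)$ in Theorem~\ref{relaxed area BV}. The paper's explicit construction, on the other hand, is self-contained (it does not rely on the nontrivial result of \cite{BMP}), yields the stronger $W^{1,p}$-convergence noted in the Remark following the proposition, and provides the template reused in Proposition~\ref{upper bound TVBV}.
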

\begin{proof}
Although $v_k$ needs to be of class $C^1$, we claim 
that it suffices to build $v_k$ just Lipschitz continuous. 
Indeed, assume that $(v_k)\subset 
W^{1,\infty}(B_\radius;\R^2) \cap C^1(B_\radius; \R^2)$ 
converges to $w$ strictly $BV(B_\radius;\R^2)$ and \eqref{recover} holds. Consider, for all $k \in \mathbb N$, a sequence $(v^k_h)
\subset C^1(B_\radius;\R^2)$ approaching $v_k$ in $W^{1,2}(B_\radius;\R^2)$ 
as $h\rightarrow +\infty$. In particular, we get the $L^1$-convergence of 
all  minors of $\nabla v^k_{h}$ to the corresponding 
ones of $\nabla v_k$. Then, by dominated convergence,
\begin{align}\label{densityenergy}
\lim_{h\rightarrow+\infty}\mathcal{A}(v^k_h;B_\radius)=\mathcal{A}(v_k;B_\radius).
\end{align}
Hence, by a diagonal argument, we find a sequence $(v^k_{h_k})$ 
converging to $w$ strictly $BV(B_\radius;\R^2)$ such that \eqref{recover} holds for $v^k_{h_k}$ in place of $v_k$.

Let us consider the map  $\overline\varphi:\Suno\rightarrow\Suno$ 
given by 
\begin{align}\label{gradostandard}
\overline\varphi(\cos\theta,\sin\theta):=(\cos(d\theta),\sin(d\theta))
\quad {\rm where}~ d:=\grado(\varphi). 
\end{align}
Then
\begin{equation}\label{mappa}
\mathrm{mult}(\overline\varphi)=|\grado(\overline\varphi)|, \qquad
\grado(\overline\varphi)=\grado(\varphi),
\end{equation}
and, in particular, $\mathrm{mult}(\overline\varphi)=|\grado(\varphi)|$.
Moreover, since the maps $\varphi$ and $\overline\varphi$ 
have the same degree, we can construct a Lipschitz homotopy $H:[0,1]\times\Suno\rightarrow\Suno$ between them. Precisely, 
if $\Phi$ and $\overline\Phi$ are Lipschitz
 liftings of $\varphi$ and $\overline\varphi$ respectively, we 
define $\Psi(t,\cdot):=t\Phi(\cdot)+(1-t)\overline\Phi(\cdot)$, which is 
Lipschitz. Hence one defines the map $H(t,\cdot):[0,2\pi)\rightarrow\Suno$ as 
$H(t,\cdot):=(\cos (\Psi(t,\cdot),\sin(\Psi(t,\cdot)))$,
which satisfies 
\begin{equation}\label{omotopia}
H(0,\cdot)=\overline\varphi(\cdot), \qquad
H(1,\cdot)=\varphi(\cdot).
\end{equation}
It remains to show that $H(t,\cdot)$ defines a continuous (and then Lipschitz) map from $\Suno$ to $\Suno$, i.e. that is $2\pi$-periodic: to this aim it is enough to observe that $\Psi(t,2\pi)$ and $\Psi(t,0)$ differ from a constant multiple of $2\pi$ and indeed, recalling \eqref{lifting and degree}, we have
$\Phi(2\pi)-\Phi(0)=2\pi d=\overline\Phi(2\pi)-\overline\Phi(0)$, 
from which easily follows that $\Psi(t,2\pi)-\Psi(t,0)=2\pi d$. \\

We now define the sequence $(v_k)\subset$ Lip$(B_\radius;\R^2)$ as
$v_k(0):=0$, 
\begin{equation}\label{recovery seq}
v_k:=
\begin{cases}
\overline{v}_k & {\rm in}~ B_\frac{\radius}{k} \setminus \{0\},
\\
h_k & {\rm in}~ B_\frac{2\radius}{k}\setminus B_\frac{\radius}{k},
\\
w = \varphi\left(\frac{x}{|x|}\right) & {\rm in}~  B_\radius\setminus B_\frac{2\radius}{k},
\end{cases}
\end{equation}
where 
$$\overline{v}_k(x):=\frac{k}{\radius}|x|\overline\varphi\left(\frac{x}{|x|}\right)\qquad \forall x\in B_\frac{\radius}{k},$$
 and 
$$
h_k(x):=H\left(\frac{k}{\radius}|x|-1,\frac{x}{|x|}\right)\qquad\forall x\in B_\frac{2\radius}{k}\setminus B_\frac{\radius}{k}.
$$
Let us check that 
\begin{align}\label{variation det v_k}
\int_{B_\radius}|J v_k|dx=\pi|d|\qquad \forall k\in\N.
\end{align}
Since $H$ and $w$ take values on $\Suno$,  we have 
$$
\int_{B_\radius\setminus B_\frac{\radius}{k}}|Jv_k|dx=\int_{B_\frac{2\radius}{k}\setminus B_\frac{\radius}{k}}|J h_k|dx+\int_{B_\radius\setminus B_\frac{2\radius}{k}}|J w|dx=0.
$$
Moreover, 
$\mathrm{mult}(\overline{v}_k, B_\frac{\radius}{k},\cdot)$=$\mathrm{mult}(\overline{\varphi})$, and
therefore, by \eqref{eq:det_mult},
$$
\int_{B_\frac{\radius}{k}}|J v_k|dx=\int_{B_\frac{\radius}{k}}|J\overline{v}_k|dx=\int_{B_1}\mathrm{mult}(\overline{v}_k, B_\frac{\radius}{k},y)dy=|B_1|\mathrm{mult}(\overline{\varphi})=\pi|d|.
$$
We now prove that $v_k\rightarrow w$ in $W^{1,p}(B_\radius;\R^2)$ 
for every $p\in[1,2)$. This, in particular, implies 
 the desired strict convergence in $BV$. 
Since $v_k=w$ in $B_\radius\setminus B_\frac{2\radius}{k}$, we have to do the computation on $B_\frac{2\radius}{k}$:
$$
\int_{B_\frac{2\radius}{k}}|v_k-w|^p dx\leq 2^{p-1}
\int_{B_\frac{2\radius}{k}}(|v_k|^p+|w|^p)dx\leq 2^p |B_\frac{2\radius}{k}|\xrightarrow{k\rightarrow +\infty}0.
$$
In addition
$$
|\grad v_k|=|\grad h_k|\leq 2k\,\mathrm{lip}(H)\quad\mbox {a.e. in } B_\frac{2\radius}{k}\setminus B_\frac{\radius}{k},
$$
hence
\begin{equation}
\begin{aligned}
\int_{B_\frac{2\radius}{k}\setminus B_\frac{\radius}{k}}|\grad v_k-\grad w|^pdx&\leq C\left[(2k)^p\mathrm{lip}(H)^p|B_{\frac{2\radius}{k}}|+\int_{B_{\frac{2\radius}{k}}}|\grad w|^pdx\right]\\
&\leq C\left[C\frac{k^p}{k^2}+\int_{B_{\frac{2\radius}{k}}}|\grad w|^pdx\right]\xrightarrow{k\rightarrow+\infty}0,
\end{aligned}
\end{equation}
where $C>0$ is a positive constant independent of $k$.
Finally, setting 
$\overline{w}(x):=\overline\varphi\left(\frac{x}{|x|}\right)$ for $x \in B_\radius \setminus \{0\}$, we have
$$
\grad v_k(x)=
\frac{k}{\ell}|x|\grad \overline{w}(x)+\frac{k}{\ell}\overline{w}(x)\otimes\frac{x}{|x|}
\qquad 
{\rm for~a.e.}~
x \in B_\frac{\radius}{k}.
$$
Whence
\begin{equation}
\begin{aligned}
\int_{ B_\frac{\radius}{k}}|\grad v_k-\grad w|^pdx&\leq C\int_{B_{\frac{\radius}{k}}}\left(k^p|x|^p|\grad \overline{w}|^p+k^p\left|\overline{w}(x)\otimes\frac{x}{|x|}\right|+|\grad w|^p\right)dx\\
&\leq C\left[\int_{B_{\frac{\radius}{k}}}|\grad \overline{w}|^pdx+k^p|B_\frac{\radius}{k}|+\int_{B_{\frac{\radius}{k}}}|\grad w|^pdx\right]\xrightarrow{k\rightarrow +\infty}0.\\
\end{aligned}
\end{equation}
 Now, we easily get \eqref{recover}: upon extracting a (not relabelled)
subsequence such that $(\grad v_k)$ 
converges almost everywhere to $\grad w$, 
by \eqref{variation det v_k} and dominated convergence theorem we have
$$
\limsup_{k\rightarrow +\infty}\mathcal{A}(v_k;B_\radius)
\leq\lim_{k\rightarrow +\infty}\int_{B_\radius}\sqrt{1+|\grad v_k|^2}dx
+\lim_{k\rightarrow +\infty}\int_{B_\radius}|Jv_k|dx=\int_{B_\radius}
\sqrt{1+|\grad w|^2}dx+\pi|d|.
$$
\end{proof}

\begin{Remark}
	In the proof of the upper bound in 
Proposition \ref{upper bound2}
we have shown the $W^{1,p}$ 
convergence of the recovery sequence to the function $w$, for $p\in[1,2)$. 
Hence
	$$
	\overline{\mathcal{A}}_{W^{1,p}}(w;B_\radius)\leq\int_{B_\radius}\sqrt{1+|\grad w|^2}dx + \pi|\grado(\varphi)|.
	$$
	Moreover, since in general 
	$\overline{\mathcal{A}}_{BV}(\cdot\,;B_\radius)\leq\overline{\mathcal{A}}_{W^{1,p}}(\cdot\,;B_\radius)$ for all $p\geq1$,
	we deduce 
	$$
	\overline{\mathcal{A}}_{W^{1,p}}(w;B_\radius)=\int_{B_\radius}\sqrt{1+|\grad w|^2}dx + \pi|\grado(\varphi)|.
	$$
\end{Remark}

\section{Relaxation for maps in $W^{1,1}(\Omega;\Suno)$: Theorem
\ref{relaxed area BV}}\label{sec_general}
In the following lemma we generalize to a generic function in 
$W^{1,1}(B_\radius;\Suno)$ 
the argument used to prove \eqref{strict convergence on circumferencesw}, 
by showing that the strict $BV$ convergence on $B_\radius$ 
is inherited to almost every circumference centered at the origin. 
Unlike 
\eqref{strict convergence on circumferencesw} of 
 Proposition \ref{lower bound2}, 
in this more general context we have to make use of Theorem \ref{Reshetnyak}.

We start to generalize the arguments leading to \eqref{eq:lim_restriction_totvar}. 
\begin{Lemma}[\textbf{Inheritance}]\label{tangential strict convergence}
Let $(v_k)\subset C^1(B_\radius;\R^2)$, $u\in W^{1,1}(B_\radius;\R^2)$, and 
suppose that $v_k\rightarrow u$ strictly $BV(B_\radius;\R^2)$. Then,
 for 
almost every $r\in(0,\radius)$, there exists a subsequence $(v_{k_h})$, depending on $r$, such that 
	$$
	v_{k_h}\mres\partial B_r\rightarrow u\mres\partial B_r \quad\mbox{ strictly } BV(\partial B_r;\R^2).
	$$
\end{Lemma}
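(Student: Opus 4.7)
The plan is to adapt the slicing argument used to establish \eqref{strict convergence on circumferencesw} in the proof of Proposition \ref{lower bound2}, where the explicit tangential total variation $\mathrm{Var}(\varphi)$ of the limit $w$ could be read off from \eqref{def_w}. Without such an explicit expression for $u$, I would replace the direct computation by Reshetnyak's continuity theorem.

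First, by Fubini applied to $\int_{B_\radius}|v_k-u|\,dx\to 0$ together with a diagonal extraction, one gets $v_k\mres\partial B_r \to u\mres\partial B_r$ in $L^1(\partial B_r;\R^2)$ for a.e.\ $r\in(0,\radius)$. Setting, in analogy with Proposition \ref{lower bound2},
\[
\sliceTV_k(r):=\vert D(v_k\mres \partial B_r)\vert(\partial B_r)=\int_{\partial B_r}\left\vert\frac{\partial v_k}{\partial s}\right\vert\,ds,\qquad \sliceTV(r):=\liminf_{k\to+\infty}\sliceTV_k(r),
\]
the lower semicontinuity of the one-dimensional total variation along the above $L^1$-convergence gives $\vert D(u\mres\partial B_r)\vert(\partial B_r)\leq \sliceTV(r)$ for a.e.\ $r$, and the real goal is the reverse inequality (up to a subsequence).

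The key ingredient is an integrated identity. Letting $\hat\tau(x):=x^\perp/\vert x\vert$ denote the unit tangent field to the circles centered at $0$, one has $\vert\partial v_k/\partial s\vert=\vert\hat\tau\cdot\nabla v_k\vert$ pointwise, so Fubini gives
\[
\int_0^\radius\sliceTV_k(r)\,dr=\int_{B_\radius}\vert\hat\tau(x)\cdot\nabla v_k(x)\vert\,dx,
\]
and likewise for $u$. I would then pass to the limit via Theorem \ref{Reshetnyak} applied to the continuous $1$-homogeneous integrand $\xi\mapsto\vert\hat\tau(x)\cdot\xi\vert$. Strict $BV$-convergence implies $\vert Dv_k\vert\stackrel{*}{\rightharpoonup}\vert Du\vert$ as measures on $B_\radius$; since $u\in W^{1,1}$ the measure $\vert Du\vert$ is absolutely continuous, so strict convergence also holds on the annulus $B_\radius\setminus\overline{B_\eps}$ for every $\eps>0$. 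On this annulus $\hat\tau$ is continuous and bounded, and Reshetnyak yields
\[
\int_{B_\radius\setminus\overline{B_\eps}}\vert\hat\tau\cdot\nabla v_k\vert\,dx\longrightarrow\int_{B_\radius\setminus\overline{B_\eps}}\vert\hat\tau\cdot\nabla u\vert\,dx.
\]
The residuals on $B_\eps$ are bounded by $\vert Dv_k\vert(B_\eps)$, whose $\limsup$ in $k$ is at most $\vert Du\vert(\overline{B_\eps})$, which vanishes as $\eps\to 0$ by absolute continuity of $\vert Du\vert$. Hence $\int_0^\radius\sliceTV_k(r)\,dr\to\int_0^\radius\vert D(u\mres\partial B_r)\vert(\partial B_r)\,dr$.

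Finally, Fatou's lemma gives $\int_0^\radius\sliceTV(r)\,dr\leq\int_0^\radius\vert D(u\mres\partial B_r)\vert(\partial B_r)\,dr$, and combined with the lower semicontinuity bound this forces $\sliceTV(r)=\vert D(u\mres\partial B_r)\vert(\partial B_r)$ for a.e.\ $r$. For each such $r$, extracting a subsequence $(v_{k_h})$ realizing this liminf and combining with the $L^1$-convergence on $\partial B_r$ yields the sought strict $BV(\partial B_r;\R^2)$-convergence. The main obstacle is justifying the integrated identity: since $\hat\tau$ is singular at the origin, Reshetnyak cannot be applied directly on $B_\radius$, and the cutoff-plus-error argument crucially uses that $u\in W^{1,1}$ makes $\vert Du\vert$ absolutely continuous, hence uniformly small on shrinking neighborhoods of the origin.
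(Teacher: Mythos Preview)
Your proposal is correct and follows essentially the same route as the paper: compute the integrated tangential variation via $\int_{B_\radius}|(\grad v_k)\hat\tau|\,dx$, pass to the limit by Reshetnyak, and combine Fatou with the lower semicontinuity on circles to force equality almost everywhere. The only difference is that your cutoff on $B_\radius\setminus\overline{B_\eps}$ is unnecessary: since $|\hat\tau(x)|=1$ for $x\neq 0$, the test function $(x,\sigma)\mapsto|\sigma\,\hat\tau(x)|$ is already continuous and \emph{bounded} on $(B_\radius\setminus\{0\})\times\mathbb S^3$, so the paper applies Theorem~\ref{Reshetnyak} directly on the punctured ball.
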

\begin{proof}
	The (tangential) variation of the restriction of $u$ on 
$\partial B_r$ is well-defined and finite for almost every $r\in(0,1)$ 
since $u\in W^{1,1}(B_\radius;\R^2)$, and
	$$
	|D(u\mres\partial B_r)|(\partial B_r):=\int_{\partial B_r}\left|\frac{\partial u}{\partial s}\right|ds=\int_{0}^{2\pi}|\partial_\theta\widetilde{u}(r,\theta)|d\theta,
	$$
	where $\widetilde{u}:R := (0,\radius)\times[0,2\pi)\rightarrow\R^2$,
	$\widetilde{u}(\rho,\theta):=u(\rho\cos\theta,\rho\sin\theta)$.
	We compute
	\begin{equation}\label{eq:5.0}
	\int_{R}\left|\partial_\theta\widetilde{u}\right|d\rho d\theta=\int_{B_\radius}
|(\grad u)\tau|dx,
	\end{equation}
	with $\tau(x):=\frac{1}{|x|}(-x_2,x_1), x\neq0$. 
Indeed
\begin{equation}\nonumber
\begin{aligned}
\int_{R}\left|\partial_\theta\widetilde{u}\right|d\rho d\theta&=
\int_0^\radius\!\!\int_0^{2\pi}\left[\sum_{i=1}^2\rho^2\left((\partial_{x_1}u_i)^2
(\sin\theta)^2+(\partial_{x_2}u_i)^2(\cos\theta)^2-2\partial_{x_1}u_i\partial_{x_2}u_i\cos\theta\sin\theta\right)\right]^\frac{1}{2}\!\!\!\! d\rho d\theta
\\
&=\int_{B_\radius}\frac{1}{|x|}\left[\sum_{i=1}^2
\left((\partial_{x_1}u_i)^2x_2^2
+(\partial_{x_2}u_i)^2 
x_1^2-2\partial_{x_1}u_i\partial_{x_2}u_ix_1x_2\right)\right]^\frac{1}{2}\!\!\! dx
\\
&=\int_{B_\radius}\sqrt{|\grad u_1\cdot \tau|^2+|\grad u_2
\cdot\tau|^2}dx
=\int_{B_\radius}|(\grad u)\tau|dx.
\end{aligned}
\end{equation}
In the same way we get
	$$
	\int_{R}\left|\partial_\theta\widetilde{v}_k
\right|d\rho d\theta=\int_{B_\ell}|(\grad v_k)\tau|dx.
	$$
	Thanks to Theorem \ref{Reshetnyak}, with the choices $M=4$,
	$\mathbb S^3\subset\R^4=\R^{2\times2}$, 
$f\in C_b((B_\radius
\setminus\{0\})\times \mathbb S^3)$, 
$$f(x,\sigma):=\sqrt{|\sigma_{{\rm hor}}\cdot \tau(x)|^2+|\sigma_{{\rm vert}}\cdot  \tau(x)|^2},$$ 
where $\sigma\in\mathbb S^3$ and $\sigma_{{\rm hor}}:=
(\sigma_1,\sigma_2), \sigma_{{\rm vert}}:=(\sigma_3,\sigma_4)$,
we obtain
	\begin{equation}\label{Reshetnyak application}
	\lim_{k\rightarrow+\infty}\int_{B_\ell}|(\grad v_k)\tau|dx
=\int_{B_\ell}|(\grad u)\tau|dx.
	\end{equation}
	Now we notice that for almost every $r\in(0,\radius)$ we have
	$$
	v_k\mres\partial B_r\rightarrow u\mres\partial B_r \quad\mbox{in }L^1(\partial B_r;\R^2).
	$$
	Then, since $(v_k\mres\partial B_r)\subset BV(\partial B_r;\R^2)$ for every $r\in(0,\radius)$, by the lower semicontinuity of the variation we get
	\begin{equation}\label{lsc on circumferences}
	\int_{\partial  B_r}\left|\frac{\partial u}{\partial s}\right|ds\leq\liminf_{k\rightarrow+\infty}\int_{\partial B_r}\left|\frac{\partial v_{k}}{\partial s}\right|ds \quad\mbox{for a.e. }r\in(0,\radius).
	\end{equation}
	Integrating with respect to $r$ and by Fatou's lemma, we obtain 
	\begin{equation}\label{integration chain}
	\int_{R}\left|\partial_\theta\tilde{u}
\right|dr d\theta=\int_{0}^\radius \int_{\partial 
 B_r}\left|\frac{\partial u}{\partial s}\right|dsdr\leq\int_0^\ell\liminf_{k\rightarrow+\infty}\int_{\partial B_r}\left|\frac{\partial v_{k}}{\partial s}\right|dsdr\leq\liminf_{k\rightarrow+\infty}\int_{R}\left|\partial_\theta\widetilde{v}_k
\right|drd\theta.
	\end{equation}
	But we notice that, by \eqref{eq:5.0}
and \eqref{Reshetnyak application}, we must have all equalities in \eqref{integration chain}. In particular,
	$$
	\int_{\partial  B_r}\left|\frac{\partial u}{\partial s}\right|ds=\liminf_{k\rightarrow+\infty}\int_{\partial B_r}\left|\frac{\partial v_{k}}{\partial s}\right|ds \quad\mbox{for a.e. }r\in(0,\radius),
	$$
	and we 
conclude extracting a suitable subsequence $(v_{k_h})$ of $(v_k)$ depending on $r$ such that
          $$
           \lim_{h\rightarrow+\infty}\int_{\partial B_r}\left|\frac{\partial v_{k_h}}{\partial s}\right|ds=\liminf_{k\rightarrow+\infty}\int_{\partial B_r}\left|\frac{\partial v_{k}}{\partial s}\right|ds.
          $$
\end{proof}

\begin{Definition}
Let $u\in W^{1,1}(\Omega;\Suno)$ and  $\totvarjac_{W^{1,1}}(u;\Omega) < +\infty$. We set
$$
\totvarjac_{BV}(u;\Omega):=\inf\left\{\liminf_{k\rightarrow+\infty}\totvarjac(v_k;\Omega):
 (v_k)\subset C^1(\Omega,\R^2)\cap BV(\Omega;\R^2), v_k\rightarrow u {\rm ~strictly~} BV\right\}.
$$
\end{Definition}

The proof of Theorem \ref{relaxed area BV} is essentially a 
consequence of the following theorem.

\begin{Theorem}[\textbf{Relaxation of $\totvarjac$ in the
strict convergence}]\label{teo:relaxation_of_TV_in_the_strict_convergence}
Let 
$u\in W^{1,1}(\Omega;\Suno)$ be such that $\totvarjac_{W^{1,1}}(u;\Omega) < +\infty$,
and write ${\rm Det} \grad u$ as in  \eqref{eq:Det_grad_u_sum_deltas}. 
 Then
$$
\totvarjac_{BV}(u;\Omega)
=\pi\sum_{i=1}^\cardsing |d_i|.
$$
In particular, $\totvarjac_{BV}(u;\Omega)
=\totvarjac_{W^{1,1}}(u;\Omega)=|\mathrm{Det}\grad u|(\Omega)$. 
\end{Theorem}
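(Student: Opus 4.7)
The strategy splits into an upper and a lower bound. For the upper bound, I observe that strong $W^{1,1}$-convergence $v_k\to u$ of maps in $C^1\cap W^{1,1}$ automatically implies strict $BV$-convergence, since $|Dv_k|(\Omega)=\int_\Omega|\nabla v_k|\,dx\to\int_\Omega|\nabla u|\,dx=|Du|(\Omega)$ when $u\in W^{1,1}$. Hence every sequence admissible for $\totvarjac_{W^{1,1}}(u;\Omega)$ is admissible for $\totvarjac_{BV}(u;\Omega)$, and Theorem \ref{Brezis thm} yields $\totvarjac_{BV}(u;\Omega)\leq \totvarjac_{W^{1,1}}(u;\Omega)=\pi\sum_{i=1}^{\cardsing}|d_i|$.

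For the lower bound, I would take any $(v_k)\subset C^1\cap BV$ with $v_k\to u$ strictly $BV$ and $\liminf_k\totvarjac(v_k;\Omega)<+\infty$, pass to a subsequence realizing the liminf as a limit, and then localize around each topological singularity $x_i$. By Theorem \ref{Brezis thm} combined with Fubini, I can choose radii $r_1,\dots,r_{\cardsing}>0$ making the balls $B_{r_i}(x_i)$ pairwise disjoint and compactly contained in $\Omega$, with $u\mres\partial B_{r_i}(x_i)\in W^{1,1}(\partial B_{r_i}(x_i);\Suno)$ of degree $d_i$. Iteratively applying Lemma \ref{tangential strict convergence} on each of the finitely many circles, I may further assume that $v_k\mres\partial B_{r_i}(x_i)\to u\mres\partial B_{r_i}(x_i)$ strictly in $BV(\partial B_{r_i}(x_i);\R^2)$, and hence uniformly on these circles by Proposition \ref{strict implies uniform}.

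The heart of the argument is a direct boundary-degree computation on each ball. Since $v_k$ is $C^1$, formula \eqref{int_det_bordo} gives
\[
\int_{B_{r_i}(x_i)}\det\nabla v_k\,dx=\tfrac12\int_{\partial B_{r_i}(x_i)}\big(v_{k,1}\partial_s v_{k,2}-v_{k,2}\partial_s v_{k,1}\big)ds,
\]
and I would show the right-hand side tends to $\pi\grado(u,\partial B_{r_i}(x_i))=\pi d_i$. Splitting $v_{k,1}\partial_s v_{k,2}=(v_{k,1}-u_1)\partial_s v_{k,2}+u_1\partial_s(v_{k,2}-u_2)+u_1\partial_s u_2$ (and symmetrically for the other piece), the first contribution is bounded by $\|v_k-u\|_{L^\infty(\partial B_{r_i})}\,\|\partial_s v_k\|_{L^1(\partial B_{r_i})}$, which vanishes by uniform convergence combined with the equi-bounded variations on the circle provided by strict $BV$-convergence; the second, after integration by parts on the \emph{closed} curve $\partial B_{r_i}(x_i)$ (no boundary terms), is bounded by $\|\partial_s u\|_{L^1}\,\|v_k-u\|_{L^\infty}$ and likewise vanishes; and the third integrates to $2\pi d_i$ by \eqref{eq:grado}. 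Therefore $\liminf_k\int_{B_{r_i}(x_i)}|\det\nabla v_k|\,dx\geq\pi|d_i|$, and summing over the pairwise disjoint balls delivers $\liminf_k\totvarjac(v_k;\Omega)\geq \pi\sum_{i=1}^{\cardsing}|d_i|$.

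The main obstacle is precisely this passage to the limit in the boundary-degree formula: strict $BV$-convergence is generally weaker than weak $W^{1,1}$-convergence (Remark \ref{rem_2.3}), so one cannot pass directly to the limit in the bilinear expression $v_k\cdot\partial_s v_k$. The remedy is that on a one-dimensional circle Proposition \ref{strict implies uniform} promotes strict convergence to uniform convergence, and integration by parts on a closed curve shifts the tangential derivative onto the $W^{1,1}$ limit $u$, at which point the uniform estimate closes the argument.
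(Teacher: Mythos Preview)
Your argument is correct, and both halves follow a genuinely different (and shorter) route than the paper's.

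For the upper bound, the paper (Proposition~\ref{upper bound TVBV}) builds an explicit recovery sequence by approximating $u$ in $C^\infty(\Omega\setminus B_k;\Suno)$ via Theorem~\ref{approssimanti lisce}, homotoping to the standard degree map near each singularity, and filling in conically; it then checks that this sequence converges in $W^{1,1}$ and has $\int|Jv_k|\to\pi\sum|d_i|$. Your observation that strong $W^{1,1}$-convergence automatically implies strict $BV$-convergence, so that $\totvarjac_{BV}\leq\totvarjac_{W^{1,1}}$ directly from the definitions and Theorem~\ref{Brezis thm}, bypasses this construction entirely. This is a legitimate shortcut since Theorem~\ref{Brezis thm} is quoted as known; the paper's longer proof has the merit of making the recovery sequence explicit (and is reused for the area functional in the proof of Theorem~\ref{relaxed area BV}).

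For the lower bound, the paper (Proposition~\ref{lower bound TVBV}) builds a Lipschitz interpolant $w_h$ on an annulus that bridges $v_{k_h}$ on an inner circle to a smooth $\Suno$-valued approximant $u_h$ of $u$ on an outer circle, shows $\int_{\text{annulus}}|Jw_h|\to0$ via the explicit polar computation, and then invokes the topological inequality \eqref{stima_jacob_grado}. You instead pass to the limit directly in the boundary-degree formula \eqref{int_det_bordo}, using the splitting of the bilinear form $v_{k,1}\partial_s v_{k,2}$ together with uniform convergence on the circle (Proposition~\ref{strict implies uniform}) and integration by parts on the closed curve to transfer the tangential derivative onto the $W^{1,1}$ trace of $u$. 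This is the approach the paper itself adopts in Section~\ref{sec_triple pt} for the triple-junction map, and it avoids the interpolant construction altogether. One small point worth making explicit: to apply Lemma~\ref{tangential strict convergence} on each ball $B_{r_i}(x_i)$ you need strict $BV$-convergence there, which follows from strict convergence on $\Omega$ because $|Du|=|\nabla u|\,\mathcal L^2$ gives $|Du|(\partial B_{r_i}(x_i))=0$, so that $|Dv_k|\rightharpoonup^*|Du|$ yields $|Dv_k|(B_{r_i}(x_i))\to|Du|(B_{r_i}(x_i))$.
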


As usual, we divide the proof of Theorem \ref{teo:relaxation_of_TV_in_the_strict_convergence}
into two parts, 
the lower bound (Proposition \ref{lower bound TVBV}) and the upper bound (Proposition \ref{upper bound TVBV}).

\begin{Proposition}[\textbf{Lower bound for $\totvarjac_{BV}$}]
\label{lower bound TVBV}
Let 
$u\in W^{1,1}(\Omega;\Suno)$ be such that $\totvarjac_{W^{1,1}}(u;\Omega) < +\infty$,
and write ${\rm Det} \grad u$ as in  \eqref{eq:Det_grad_u_sum_deltas}. 
Then 
$$
\totvarjac_{BV}(u;\Omega)\geq\pi\sum_{i=1}^\cardsing |d_i|.
$$
\end{Proposition}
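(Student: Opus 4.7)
The plan is to localize the argument of Proposition \ref{lower bound2} around each topological singularity of $u$.

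Let $(v_k)\subset C^1(\Om;\R^2)\cap BV(\Om;\R^2)$ converge strictly to $u$ in $BV$ with $\liminf_k\totvarjac(v_k;\Om)=\lim_k\totvarjac(v_k;\Om)<+\infty$ (up to subsequences). By a truncation argument as in \cite[Lemma 3.3]{AD} we may assume $|v_k|\leq 1$, which is compatible with the strict $BV$ convergence since $|u|=1$. By Theorem \ref{Brezis thm}, $u$ has singularities $\{x_1,\ldots,x_\cardsing\}$ with associated degrees $d_i\in\Z\setminus\{0\}$. Fix pairwise disjoint closed balls $\overline{B_{R_i}(x_i)}\subset\Om$ and iterate Lemma \ref{tangential strict convergence} once for each $i$ (applied to the current subsequence) to extract a single subsequence (still denoted $(v_k)$) and radii $r_i\in(0,R_i)$ such that, for every $i$, $u\mres \partial B_{r_i}(x_i)\in W^{1,1}(\partial B_{r_i}(x_i);\Suno)$ has degree $d_i$ and $v_k\mres\partial B_{r_i}(x_i)\to u\mres\partial B_{r_i}(x_i)$ strictly in $BV(\partial B_{r_i}(x_i);\R^2)$. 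Proposition \ref{strict implies uniform} then upgrades the latter to uniform convergence on each $\partial B_{r_i}(x_i)$.

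For each $i$ I would repeat the construction of Proposition \ref{lower bound2}: after approximating $u\mres \partial B_{r_i}(x_i)$ in $W^{1,1}(\partial B_{r_i}(x_i);\Suno)$ by a smooth map of the same degree $d_i$ (via Theorem \ref{teo_approx}), define a Lipschitz auxiliary map $w_k^{(i)}:B_{R_i}(x_i)\to\overline B_1$ by setting $w_k^{(i)}=v_k$ on $B_{r_i}(x_i)$ and, on the annulus $B_{R_i}(x_i)\setminus B_{r_i}(x_i)$, by the radial-linear interpolation between $v_k\mres\partial B_{r_i}(x_i)$ and the $0$-homogeneous extension of the chosen smooth approximation. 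On $\partial B_{R_i}(x_i)$ the map $w_k^{(i)}$ then takes values in $\Suno$ with degree $d_i$. The same polar-coordinate expansion of the Jacobian carried out in Proposition \ref{lower bound2}, combined with the uniform convergence of $v_k$ to $u$ on $\partial B_{r_i}(x_i)$ and the equiboundedness of the tangential variations $\int_{\partial B_{r_i}(x_i)}|\partial v_k/\partial s|\,ds$, shows that $\int_{B_{R_i}(x_i)\setminus B_{r_i}(x_i)}|\det\nabla w_k^{(i)}|\,dx\to 0$ as $k\to+\infty$.

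Applying the degree inequality \eqref{stima_jacob_grado} to $w_k^{(i)}$ on $B_{R_i}(x_i)$ yields $\int_{B_{R_i}(x_i)}|\det\nabla w_k^{(i)}|\,dx\geq \pi|d_i|$, and since $w_k^{(i)}=v_k$ on $B_{r_i}(x_i)$ this gives $\int_{B_{r_i}(x_i)}|\det\nabla v_k|\,dx\geq \pi|d_i|-o_k(1)$. Summing over the disjoint balls,
$$
\totvarjac(v_k;\Om)\geq \sum_{i=1}^\cardsing \int_{B_{r_i}(x_i)}|\det\nabla v_k|\,dx\geq \pi\sum_{i=1}^\cardsing |d_i|-o_k(1),
$$
and passing to $\liminf_k$ gives the claim. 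The principal difficulty is the vanishing of the annular Jacobian integral: it relies crucially on the uniform convergence on each $\partial B_{r_i}(x_i)$, which is exactly where the strict $BV$ hypothesis enters through Lemma \ref{tangential strict convergence} and Proposition \ref{strict implies uniform}. The preliminary Lipschitz smoothing of $u\mres\partial B_{r_i}(x_i)$ is a mild technicality ensuring that $w_k^{(i)}$ is regular enough to apply \eqref{stima_jacob_grado}.
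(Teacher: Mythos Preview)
Your proposal is correct and follows essentially the same route as the paper: localize to disjoint balls around the singularities, invoke the inheritance lemma (Lemma~\ref{tangential strict convergence}) to obtain strict $BV$ convergence on suitable circles, upgrade to uniform convergence via Proposition~\ref{strict implies uniform}, smooth $u\mres\partial B_{r_i}$ via Theorem~\ref{teo_approx}, build the radial interpolation $w_k^{(i)}$, show the annular Jacobian vanishes, and apply the degree inequality~\eqref{stima_jacob_grado}. Your iterated extraction of a single subsequence valid for all $i$ is in fact tidier than the paper, which treats one singularity and leaves the summation implicit.

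One small technical imprecision: you interpolate against a \emph{fixed} smooth approximation $\varphi_i$ of $u\mres\partial B_{r_i}(x_i)$ and then assert that the annular integral $\int_{B_{R_i}\setminus B_{r_i}}|\det\nabla w_k^{(i)}|\,dx\to 0$. The polar--coordinate computation from Proposition~\ref{lower bound2} actually needs $\|v_k-\text{(outer boundary map)}\|_{L^\infty(\partial B_{r_i})}\to 0$; with a fixed $\varphi_i$ one only gets $\|v_k-\varphi_i\|_{L^\infty}\to\|u-\varphi_i\|_{L^\infty}$, leaving a residual of order $\|u-\varphi_i\|_{L^\infty}$ (times the bounded tangential variations). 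The paper avoids this by taking a \emph{sequence} $u_h\to u$ in $W^{1,1}(\partial B_{r_\eps};\Suno)$ matched with the subsequence index, so that $\|v_{k_h}-u_h\|_{L^\infty}\to 0$. Your version is easily repaired either by doing the same, or by sending $\varphi_i\to u$ in a second limit after $k\to\infty$.
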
 
\begin{proof}
According to Theorem \ref{Brezis thm}, 
we choose a radius $\ell>0$ so that the balls $B_\ell({x_i})\subset\Om$, $i=1,\dots,m$, are disjoint. 
Let $(v_k)\subset C^1(\Om;\R^2)$ be such that $v_k\rightarrow u\mbox{ strictly }BV(B_\radius;\R^2)$ and
$$
\lim_{k\rightarrow+\infty}\int_{\Omega}|Jv_k|dx=\totvarjac_{BV}(u;\Omega).
$$
To show the thesis it is sufficient to prove that, for all $i=1,\dots,m$,
$$
\lim_{k\rightarrow+\infty}\int_{B_\radius(x_i)}|Jv_k|dx\geq \pi d_i,
$$
and it suffices to show this inequality for $i=1$.
Let us denote $B_\ell(x_1)$ simply by $B_\ell$. Without loss of generality we may assume $x_1=(0,0)$.
 Since $u\in W^{1,1}(B_\radius;\Suno)$, 
it is $W^{1,1}(\partial B_r;\Suno)$, in particular continuous,
 for almost every $r\in(0,\radius)$.
Thus, we can choose $\overline r>0$ small enough so that $u\mres \partial B_{\overline r}\in W^{1,1}(\partial B_r;\Suno)$. Since the balls $B_\ell(x_i)$, $i=1,\dots,m$, are disjoint, we also have $\grado(u,\partial B_{\overline r},\cdot)=d_1$. 
From Theorem \ref{teo_approx} and Lemma \ref{tangential strict convergence}, 
we get that
\begin{equation}\label{strict convergence on circumferences bis}
\begin{aligned}
&\forall \varepsilon\in(0,\overline r) \quad\exists r_\eps\in(0,\varepsilon)\quad\exists(v_{k_h})\subset(v_k)\quad \exists(u_h)\subset C^\infty(\partial B_{r_\eps};\Suno) \quad\mbox{s.t.}\\
& u\mres \partial B_{r_\varepsilon}\in W^{1,1}(\partial B_{r_\eps};\Suno), \quad u_h\rightarrow u\mres\partial B_{r_\eps} \quad\text{in } W^{1,1}(\partial B_{r_\eps};\Suno),\\
&\text{and } v_{k_h}\mres\partial B_{r_\varepsilon}\rightarrow u\mres\partial B_{r_\varepsilon}\quad\mbox{strictly } BV(\partial B_{r_\varepsilon};\R^2).
\end{aligned}
\end{equation}
In particular, on $\partial B_{r_\eps}$ we have uniform convergence of $(u_h)$ and $(v_{k_h})$  to $u$ by 
Proposition \ref{strict implies uniform}. 
Setting as usual $J v_{k_h} = {\rm det} \grad v_{k_h}$, 
write
$$
\int_{B_{r_\varepsilon}}|Jv_{k_h}|dx=\int_{B_{\overline r}}|Jw_h|dx-\int_{B_{\overline r}\setminus B_{r_\varepsilon}}|Jw_h|dx,
$$
where $w_h\in \text{Lip}(B_{\overline r};\R^2)$ and is given by 
\begin{equation}\label{def of w_k}
w_h(x):=\left\{
\begin{aligned}
&v_{k_h}(x)  &\quad&\mbox{ if }|x|\leq r_\eps\\
&\frac{\overline r-|x|}{\overline r-r_\varepsilon}v_{k_h}\left(r_\eps\frac{x}{|x|}\right)+\frac{|x|-r_\eps}{\overline r-r_\eps}u_h\left(r_\eps\frac{x}{|x|}\right)  &&\mbox{ if }r_\eps<|x|\leq \overline r.
\end{aligned}
\right.
\end{equation}
Now, since $\|v_{k_h}-u_h\|_{L^\infty(\partial B_{r_\eps})}\rightarrow 0$ as $h\rightarrow +\infty$, arguing as in the proof of \eqref{contributo regolare di w_k}
 we have
\begin{equation}\label{Jacobian main computation}
\lim_{h\rightarrow+\infty}\int_{B_{\overline r}\setminus B_{r_\varepsilon}}|Jw_h|dx=0.
\end{equation}
Moreover, from \eqref{def of w_k}  we note that 
\begin{equation}\label{deg of w_k} 
\grado(w_h,\partial B_{\overline r})=\grado(u_h,\partial B_{r_\eps}).
\end{equation}
Thanks to the uniform convergence 
of $(u_h)$ to $u$ on $\partial B_{r_\eps}$, for $h$ large enough, $u_h$ and $u\mres\partial B_{r_\eps}$ must have the same degree
$$\grado(u_h,\partial B_{r_\eps})=\grado(u,\partial B_{r_\eps})=d_1.$$
 Then, arguing as in \eqref{degree inequality}, we obtain that 
$$
\int_{B_{\overline r}}|Jw_h|dx\geq\pi|\grado(w_h,\partial B_{\overline r})|=\pi |d_1|,
$$
for $h\in\N$ sufficiently large.
In conclusion we get
\begin{equation}\label{TV estimate}
\totvarjac_{BV}(u;B_\radius)
=\lim_{h\rightarrow+\infty}\int_{B_\radius}|Jv_{k_h}|dx\geq\liminf_{h\rightarrow+\infty}\int_{B_{r_\eps}}|Jv_{k_h}|dx\geq\liminf_{h\rightarrow+\infty}\int_{B_{\overline r}}|Jw_h|dx\geq \pi|d_1|.
\end{equation}
\end{proof}

\begin{Proposition}[\textbf{Upper bound for $\totvarjac_{BV}$}]\label{upper bound TVBV}
Let 
$u\in W^{1,1}(\Omega;\Suno)$ be such that $\totvarjac_{W^{1,1}}(u;\Omega) < +\infty$,
and write ${\rm Det} \grad u$ as in  \eqref{eq:Det_grad_u_sum_deltas}. 
Then
$$
\totvarjac_{BV}(u;\Omega)\leq\pi\sum_{i=1}^\cardsing |d_i|.
$$
\end{Proposition}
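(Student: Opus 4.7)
The plan is to reduce the $BV$-strict relaxation immediately to the $W^{1,1}$-strong relaxation, which has already been characterized in Theorem \ref{Brezis thm}. Concretely, I would first argue that any sequence $(v_k)\subset C^1(\Omega;\R^2)\cap W^{1,1}(\Omega;\R^2)$ converging strongly to $u$ in $W^{1,1}(\Omega;\R^2)$ automatically converges to $u$ strictly in $BV(\Omega;\R^2)$. This gives the general inequality
$$
\totvarjac_{BV}(u;\Omega)\leq \totvarjac_{W^{1,1}}(u;\Omega),
$$
since every sequence admissible for the right-hand side is admissible for the left-hand side; the conclusion then follows from the identity $\totvarjac_{W^{1,1}}(u;\Omega)=|\mathrm{Det}\grad u|(\Omega)=\pi\sum_{i=1}^{\cardsing}|d_i|$ supplied by Theorem \ref{Brezis thm}.

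To justify the embedding of the two convergences, I would recall that for any $v\in W^{1,1}(\Omega;\R^2)$ one has $|Dv|(\Omega)=\int_\Omega|\grad v|\,dx$, where $|\cdot|$ is the Frobenius norm. Thus, if $v_k\to u$ in $W^{1,1}(\Omega;\R^2)$, then $v_k\to u$ in $L^1(\Omega;\R^2)$ and $\grad v_k\to\grad u$ in $L^1(\Omega;\R^{2\times2})$; in particular $\bigl||Dv_k|(\Omega)-|Du|(\Omega)\bigr|\leq \int_\Omega|\grad v_k-\grad u|\,dx\to 0$, which is precisely the strict $BV$ condition. Hence, given $\varepsilon>0$, the defining infimum of $\totvarjac_{W^{1,1}}(u;\Omega)$ in \eqref{relaxed TV} provides a sequence $(v_k)\subset C^1(\Omega;\R^2)\cap W^{1,1}(\Omega;\R^2)\subset C^1(\Omega;\R^2)\cap BV(\Omega;\R^2)$ with $v_k\to u$ strictly in $BV$ and $\liminf_k\totvarjac(v_k;\Omega)\leq \totvarjac_{W^{1,1}}(u;\Omega)+\varepsilon$. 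Letting $\varepsilon\downarrow 0$ and invoking Theorem \ref{Brezis thm} yields the claim.

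There is really no obstacle here: the whole content of the proof is the set-theoretic observation that the admissible class of approximating sequences grows when we pass from strong $W^{1,1}$-convergence to strict $BV$-convergence. Combined with Theorem \ref{Brezis thm} this immediately closes the upper bound. (Observe that, together with Proposition \ref{lower bound TVBV}, this also upgrades the inequality to the equality $\totvarjac_{BV}(u;\Omega)=\totvarjac_{W^{1,1}}(u;\Omega)=\pi\sum_{i=1}^{\cardsing}|d_i|$ stated in Theorem \ref{teo:relaxation_of_TV_in_the_strict_convergence}.)
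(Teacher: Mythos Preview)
Your argument is correct: strong $W^{1,1}$-convergence trivially implies strict $BV$-convergence, so the admissible class for $\totvarjac_{BV}$ contains that for $\totvarjac_{W^{1,1}}$, whence $\totvarjac_{BV}(u;\Omega)\le \totvarjac_{W^{1,1}}(u;\Omega)$; Theorem~\ref{Brezis thm} then gives the bound. This is a genuinely shorter route than the one in the paper.

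The paper instead builds an explicit recovery sequence $(v_k)\subset \mathrm{Lip}(\Omega;\R^2)$ converging to $u$ in $W^{1,1}$, by approximating $u$ with smooth $\Suno$-valued maps away from the singularities (via Theorem~\ref{approssimanti lisce}), inserting a Lipschitz homotopy in thin annuli around each $x_i$ to connect to a standard degree-$d_i$ map, and filling each small ball with a cone so that $\int_{B_{r_{k+1}}(x_i)}|Jv_k|\,dx=\pi|d_i|$ exactly. The two approaches differ in what they buy: yours dispatches Proposition~\ref{upper bound TVBV} in one line by black-boxing Theorem~\ref{Brezis thm}, whereas the paper's explicit construction is reused verbatim in the proof of Theorem~\ref{relaxed area BV} to obtain the upper bound for $\overline{\mathcal A}_{BV}(u;\Omega)$, where one needs not only control of $\int|Jv_k|$ but also the a.e.\ convergence of $\nabla v_k$ to $\nabla u$ (to pass to the limit in $\int\sqrt{1+|\nabla v_k|^2}\,dx$). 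Your abstract inequality would not by itself supply that additional information.
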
 
\begin{proof}
As in the proof of Proposition \ref{lower bound TVBV} 
we choose a radius $\ell>0$ so that the balls $B_\ell({x_i})\subset\Om$, $i=1,\dots,m$, are disjoint. 

We construct a suitable recovery sequence $(v_k)\subset\mathrm{Lip}(\Om;\R^2)$ such that
\begin{equation}\label{recovery job}
\lim_{k \to +\infty}v_k
= u\quad\mbox{in }W^{1,1}(\Om;\R^2)
\end{equation}
and setting $B:=\cup_{i=1}^nB_\ell(x_i)$,
\begin{equation}\label{eq:rec_job}
\lim_{k\rightarrow+\infty}
\int_{B_\radius(x_i)}|Jv_k|dx=\pi | d_i|,\qquad i=1,\dots,m,\qquad \text{ and }\int_{\Om\setminus B}|Jv_k|dx=0.
\end{equation}
As in the proof of Proposition \ref{lower bound TVBV}, we can find $r_1\leq \radius$ so that $u\in W^{1,1}(\partial B_{r_1}(x_i);\R^2)$ and $\grado(u,\partial B_{r_1}(x_i))=d_i$, 
for all $i=1,\dots,m$. For every $k\in\N$, we set $B_k:=\cup_{i=1}^m B_{2^{-k}r_1}(x_i)$. 
By Theorem \ref{approssimanti lisce},  there exists a sequence $\left(u_n^k\right)_{n\in\N}\subset C^{\infty}(\Om\setminus B_k;\Suno)$ such that
\begin{align}\label{4.11}
\lim_{n \to +\infty}
u_n^k = u\quad\quad\mbox{in }W^{1,1}(\Om\setminus B_k;\Suno).
\end{align}
Now, for all $k>1$, we choose 
$r_k\in (2^{-k}r_1,2^{-k+1}r_1)$  such that the following conditions hold: 
for all $i=1,\dots,m$,
\begin{equation}\label{412}
\begin{aligned}
&u\mres\partial B_{r_k}(x_i)\in W^{1,1}(\partial B_{r_{k}}(x_i);\Suno),
\\&
\lim_{n \to +\infty}
\|u^k_n\mres\partial B_{r_{k}}(x_i)- u\mres\partial B_{r_{k}}(x_i)\|_{W^{1,1}(\partial B_{r_{k}}(x_i);\Suno)}=0.
\end{aligned}
\end{equation}
In particular, for all $k>1$ and 
$i=1,\dots,m$, we have 
\begin{equation}\label{413}
\begin{aligned}
\lim_{n \to +\infty}
\|u^k_n\mres\partial B_{r_{k}}(x_i)- u\mres\partial B_{r_{k}}(x_i)\|_{L^\infty(\partial B_{r_{k}}(x_i);\Suno)}= 0,
\end{aligned}
\end{equation}
 thus, using \eqref{eq:deg_restriction}, 
\eqref{412}  and \eqref{eq:grado}, we obtain
\begin{equation}
\begin{aligned}
&|\grado(u^k_n,\partial B_{r_{k}}(x_i))-\grado(u,\partial B_{r_{k}}(x_i))|\\
\leq& \frac{1}{2\pi}\left(\int_{\partial B_{r_{k}}(x_i)}\left|(u_n^k)_1\frac{\partial (u_n^k)_2}{\partial s}-u_1\frac{\partial u_2}{\partial s}\right|ds+\int_{\partial B_{r_{k}}(x_i)}\left|(u_n^k)_2\frac{\partial (u_n^k)_1}{\partial s}-u_2\frac{\partial u_1}{\partial s}\right|ds\right)\longrightarrow0
\end{aligned}
\end{equation}
as $n\rightarrow +\infty$.

 Therefore, there exists $m_k\in\N$ such that, for all $i=1,\dots,m$,
\begin{align}\label{414}
\grado(u^k_n,\partial B_{r_{k}}(x_i))=\grado(u,\partial B_{r_{k}}(x_i))= d_i\qquad\forall n\geq m_k.
\end{align}
Now, using \eqref{4.11} and \eqref{412}, for all $k>1$
there is $\widetilde m_k\in \mathbb N$ such that, for all $i=1,\dots,m$,
\begin{align}
&\|u^k_n-u\|_{W^{1,1}(\Om\setminus (\cup_{i=1}^mB_{r_k}(x_i));\Suno)}\leq\|u^k_n-u\|_{W^{1,1}(\Om\setminus B_k;\Suno)}\leq \frac{1}{k}\qquad\forall n\geq \widetilde m_k,\label{416}	
\\
&\|u^k_n\mres\partial B_{r_{k}}(x_i)- u\mres\partial B_{r_{k}}(x_i)\|_{W^{1,1}(\partial B_{r_{k}}(x_i);\Suno)}\leq\frac1k\qquad\forall n\geq \widetilde m_k.\label{417}
\end{align}
Setting $n_k:=\max\{m_k,\widetilde m_k\}$, we define  $u_k:=u_{n_k}^k$, which satisfies \eqref{414} and \eqref{416}
for all $k>1$. In particular 
\begin{equation}\label{u-u_k convergence}
\lim_{k \to +\infty} 
\|u_k-u\|_{W^{1,1}(\Om\setminus (\cup_{i=1}^mB_{r_k}(x_i));\Suno)}=0.
\end{equation}
 For all $i=1,\dots,m$, let now $\overline\varphi_i:\Suno\rightarrow\Suno$ be the Lipschitz function defined in \eqref{gradostandard} with $d=d_i$, which satisfies
$$
\mathrm{mult}(\overline\varphi_i)=|\grado(\overline\varphi_i)|\quad\mbox{and }\quad\grado(\overline\varphi_i)= d_i;
$$
 Now, for all $i=1,\dots,m$,  $\overline \varphi_i$ 
and $u_k\mres\partial B_{r_{k}}(x_i)$ have the same degree,  
and so there exists a Lipschitz 
homotopy\footnote{To define it 
it suffices to consider two liftings 
of $\overline \varphi_1$ and $u_k (r_k \cdot + x_1)\mres \Suno$,
and linearly interpolate them, as done for $H$ 
in \eqref{omotopia}. Observe that $H_{k,i}$ is Lipschitz since $u_k\mres \partial B_{r_k}(x_i)$ is Lipschitz by the choice of $r_k$.} 
$H_{k,i}:[0,1]\times\Suno\rightarrow\Suno$ such that
$$
H_{k,i}(0,y)=\overline \varphi_i(y),\quad H_{k,i}(1,y)=u_k(r_ky+x_i),\qquad y\in\Suno.
$$
Let us define the sequence $(v_k)\subset\mathrm{Lip}(\Omega;\R^2)$ 
as follows: $v_k:=u_k$ in $\Omega\setminus B$, 
and, for all $i=1,\dots,m$, $v_k(x_i):= 0$ and 
\begin{equation}\label{def of v_k recovery}
v_k(x):=\left\{
\begin{aligned}
&\frac{|x-x_i|}{r_{k+1}}\overline\varphi_i\left(\frac{x-x_i}{|x-x_i|}\right)  &\quad&\mbox{ if }x\in B_{r_{k+1}}(x_i) \setminus 
\{0\},\\
&h_{k,i}(x) &&\mbox{ if }x\in B_{r_{k}}(x_i)\setminus B_{r_{k+1}}(x_i),\\
&u_k(x) && \mbox{ if }x\in B_\radius(x_i)\setminus B_{r_{k}}(x_i),
\end{aligned}
\right.
\end{equation}
where 
$$
h_{k,i}(x):=H_{k,i}\left(\frac{|x-x_i|-r_{k+1}}{r_k-r_{k+1}},\frac{x-x_i}{|x-x_i|}\right) \qquad \forall x\in B_{r_{k}}(x_i)\setminus B_{r_{k+1}}(x_i).
$$
Since $H_{k,i}$ and $u_k$ take values in $\Suno$, we have
$v_k(x) \in \Suno$ 
for $x \in \Om\setminus(\cup_{i=1}^mB_{r_{k+1}}(x_i))$, and so
$$
\int_{\Om\setminus(\cup_{i=1}^mB_{r_{k+1}}(x_i))}|Jv_k|dx=0.
$$
In particular, the second condition in \eqref{eq:rec_job} holds.
Moreover, 
$\mathrm{mult}({v}_k, B_{r_{k+1}}(x_i),\cdot)$=$\mathrm{mult}({\overline \varphi_i})$, and
therefore, by \eqref{eq:det_mult},
$$
\int_{B_{r_{k+1}}(x_i)}|J v_k|dx=\int_{B_1}\mathrm{mult}({v}_k,B_{r_{k+1}}(x_i),y)dy=|B_1|\mathrm{mult}({\overline \varphi_i})=\pi|d_i|,
$$
and also the first condition in \eqref{eq:rec_job} follows.

It remains to show \eqref{recovery job}.
By \eqref{u-u_k convergence} and \eqref{416} we have
\begin{equation}\nonumber
\begin{aligned}
&\int_{\Om}|v_k-u|dx\leq \int_{\Om\setminus (\cup_{i=1}^mB_{r_{k}}(x_i))}|u_k-u|dx+2m|B_{r_{k}}(0)|\rightarrow0\quad\mbox{as }k\rightarrow+\infty,\\
&\int_{\Om\setminus (\cup_{i=1}^mB_{r_{k}}(x_i))}|\grad v_k-\grad u|dx=\int_{\Om\setminus (\cup_{i=1}^mB_{r_{k}}(x_i))}|\grad u_k-\grad u|dx\rightarrow0\quad\mbox{as }
k\rightarrow+\infty.
\end{aligned}
\end{equation}
Now, let us show that, for all $i=1,\dots,m$,
$$
\lim_{k \to +\infty}
\|\grad h_{k,i}\|_{L^1(B_{r_{k}(x_i)}\setminus B_{r_{k+1}}(x_i))}=0.
$$
Let us make the computation for $i=1$, the other cases being identical. 
Set $H_k = H_{k,1}$ and $h_k = h_{k,1}$.
Assume without loss of generality that $x_1=(0,0)$, and 
denote $B_r(x_1)=B_r$.
By definition of  $H_k$  we have
\begin{equation}
\begin{aligned}
\|\partial_t H_k\|_{L^\infty([0,1]\times\Suno)}\leq \|\overline\varphi_1\|_{L^\infty(\Suno)}+\|u_k\|_{L^\infty(\partial B_{r_k})}\leq 2\qquad\forall k\in\N.
\end{aligned}
\end{equation}
Moreover, since $\overline\varphi_1$ is Lipschitz,
\begin{equation}
|\nabla_yH_k(t,y)|\leq |\grad^{\Suno}{\overline\varphi_1}(y)|+r_k|\grad u_k(r_ky)|\leq C+r_k|\grad u_k(r_ky)|.
\end{equation}
We now compute $\grad h_k$ for $x\in B_{r_{k}}\setminus B_{r_{k+1}}$:
$$
\grad h_k(x)=\frac{1}{r_k-r_{k+1}} \partial_tH_k\left(\frac{|x|-r_{k+1}}{r_k-r_{k+1}},\frac{x}{|x|}\right)\otimes\frac{x}{|x|}+\grad_yH_k\left(\frac{|x|-r_{k+1}}{r_k-r_{k+1}},\frac{x}{|x|}\right)\grad\left(\frac{x}{|x|}\right)
$$
and we get
\begin{equation}
\begin{aligned}
&\int_{B_{r_k}\setminus B_{r_{k+1}}}|\grad h_k|dx
\\
\leq&\int_{B_{r_k}\setminus B_{r_{k+1}}}\frac{1}{r_k-r_{k+1}}\left|\partial_tH_k\left(\frac{|x|-r_{k+1}}{r_k-r_{k+1}},\frac{x}{|x|}\right)\right|+\left|\grad_yH_k\left(\frac{|x|-r_{k+1}}{r_k-r_{k+1}},\frac{x}{|x|}\right)\right|\left|\grad\left(\frac{x}{|x|}\right)\right|\!dx\\
\leq& \frac{1}{r_k-r_{k+1}}\|\partial_t H_k\|_{L^\infty}\left|{B_{r_{k}}\setminus B_{r_{k+1}}}\right|+\int_{r_{k+1}}^{r_k}\!\!\!\int_0^{2\pi}\!\!\rho\frac{1}{\rho}\left|\grad_yH_k\left(\frac{\rho-r_{k+1}}{r_k-r_{k+1}},(\cos\theta,\sin\theta)\right)\right|\!d\rho d\theta\\
\leq& C(r_k+r_{k+1})+C(r_k-r_{k+1})+(r_k-r_{k+1})\int_0^{2\pi}r_k|\grad u_k(r_k(\cos\theta,\sin\theta))|d\theta\\
\leq& Cr_k+(r_k-r_{k+1})\int_{\partial B_{r_k}}|\grad u_k|d\mathcal H^1
\leq C\left(r_k+(r_k-r_{k+1})\right)\rightarrow0 \quad\mbox{as }k\rightarrow +\infty,
\end{aligned}
\end{equation}
where we have used \eqref{417} in the last inequality.
Then we conclude
$$
\int_{B_{r_k}\setminus B_{r_{k+1}}}\!\!|\grad v_k-\grad u|dx=\int_{B_{r_k}\setminus B_{r_{k+1}}}|\grad h_k-\grad u|dx\leq\int_{B_{r_k}\setminus B_{r_{k+1}}}|\grad h_k|dx+\int_{B_{r_k}\setminus B_{r_{k+1}}}|\grad u|dx\rightarrow0.
$$
Finally, for  $x\in B_{r_{k+1}}$, we have
$$
\grad v_k(x)=\frac{1}{r_{k+1}}\frac{x}{|x|}\otimes\overline\varphi_1\left(\frac{x}{|x|}\right)+\frac{1}{r_{k+1}}|x|\grad\left(\overline\varphi_1\left(\frac{x}{|x|}\right)\right),
$$
Then, since $\overline\varphi_1$ is Lipschitz,
$$
|\grad v_k(x)|\leq \frac{C}{r_{k+1}},
$$
so we get
$$
\int_{ B_{r_{k+1}}}|\grad v_k-\grad u|dx\leq \frac{C}{r_{k+1}}|B_{r_{k+1}}|+\int_{ B_{r_{k+1}}}|\grad u|dx\rightarrow 0,
$$
and \eqref{recovery job} follows.
\end{proof}

Now, we can prove Theorem \ref{relaxed area BV}.
\begin{proof}
We start with the proof of the lower bound. 
Arguing as in the proof of Proposition \ref{lower bound TVBV}, we may suppose 
 $\cardsing=1$, $\Omega=B_\ell$ and $x_1=(0,0)$.
Let $(v_k)\subset C^1(B_\ell;\R^2)$ be such that $v_k\rightarrow u\mbox{ strictly }BV(B_\ell;\R^2)$ and
$$
\liminf_{k\rightarrow+\infty}\mathcal{A}(v_k;B_\ell)=\lim_{k\rightarrow+\infty}\mathcal{A}(v_k;B_\ell) < +\infty.
$$
Select $r_1>0$ and $ d_1\in\Z$ as in the proof
of Proposition \ref{upper bound TVBV}. Without loss of generality we can suppose that $r_1=\ell$. So we deduce \eqref{strict convergence on circumferences bis} and the 
uniform convergence of $(v_k)$ to $u$ on
almost every circumference in $B_\ell$.
Now write
${\mathcal A}(v_k;B_\ell) = {\mathcal A}(v_k;B_\ell \setminus B_{r_\eps})
+
{\mathcal A}(v_k;B_{r_\eps}) \geq 
{\mathcal A}(v_k;B_\ell \setminus B_{r_\eps})
+ \int_{B_{r_\eps}} \vert J v_k\vert~dx$, 
so that 
\begin{equation}
\begin{aligned}
\lim_{k\rightarrow+\infty}
{\mathcal{A}(v_k;B_\ell)}&\geq\liminf_{k\rightarrow+\infty}\mathcal{A}
(v_{k};B_\ell\setminus B_{r_\varepsilon})+\liminf_{k\rightarrow+\infty}\int_{B_{r_\epsilon}}|Jv_{k}|~dx
\\
&\geq\int_{B_\ell\setminus B_{r_\varepsilon}}\sqrt{1+|\grad u|^2}dx+\liminf_{k\rightarrow+\infty}\int_{B_{r_\epsilon}}|Jv_{k}|~dx.
\end{aligned}
\end{equation}
We now apply \eqref{TV estimate} and next pass to the limit as 
$\eps\rightarrow 0^+$ to get the lower bound in \eqref{BV result}, i.e., 
$$
\liminf_{k \to +\infty}
\mathcal A(v_k; B_\radius) \geq 
 \int_{\Omega}\sqrt{1+|\grad u|^2}dx+
\pi\sum_{i=1}^N|d_i|.
$$

Concerning the proof of the 
upper bound, consider the sequence $(v_k)$ 
defined in \eqref{def of v_k recovery},
which converges to $u$ in $W^{1,1}(\Om; \R^2)$.
 Then, upon extracting a subsequence such that $(\grad v_k)$ converges 
almost everywhere to $\grad u$, by \eqref{eq:rec_job} and dominated convergence we have,
using the inequality $\sqrt{1 + a^2 + b^2  + c^2} \leq \sqrt{1 + a^2 + b^2} + \vert c\vert$ for
$a,b,c \in \R$,
\begin{equation}\nonumber
\begin{aligned}
\limsup_{k\rightarrow + \infty}\mathcal{A}(v_k;B_\ell(x_i))&\leq\lim_{k
\rightarrow +\infty}\int_{B_\ell(x_i)}\sqrt{1+|\grad v_k|^2}dx+\lim_{k\rightarrow
+\infty}\int_{B_\ell(x_i)}|Jv_k|dx\\
&=\int_{B_\ell(x_i)}\sqrt{1+|\grad u|^2}dx+\pi| d_i|,
\end{aligned}
\end{equation}
that leads to
\begin{equation}\nonumber
\begin{aligned}
\limsup_{k\rightarrow + \infty}\mathcal{A}(v_k;\Omega)&\leq\lim_{k
\rightarrow +\infty}\int_{\Omega\setminus\cup_{i=1}^m B_\ell(x_i)}\sqrt{1+|\grad v_k|^2}dx+\limsup_{k\rightarrow + \infty}\mathcal{A}(v_k;\cup_{i=1}^mB_\ell(x_i))\\
&=\int_{\Omega}\sqrt{1+|\grad u|^2}dx+\pi\sum_{i=1}^m| d_i|.
\end{aligned}
\end{equation}
\end{proof}

\begin{Remark}
If $u\in W^{1,p}(\Omega;\Suno)$, $p\in [1,2)$, the recovery sequence defined in \eqref{def of v_k recovery} converges to $u$ in $W^{1,p}(\Omega;\Suno)$ as well. Then, the results of Theorem \ref{teo:relaxation_of_TV_in_the_strict_convergence} and Theorem \ref{relaxed area BV} are still valid if one deals with the relaxation of the area functional with respect to the strong topology of $W^{1,p}(\Omega;\Suno)$.
\end{Remark}

\begin{Remark}
[\textbf{Relaxation in the local uniform convergence 
outside singularities}]
If $u$ is continuous in $\Omega\setminus\{x_1,\ldots,x_m\}$, 
one can relax the area functional with respect to the uniform convergence out of the 
singularities $\{x_i\}$, i.e., we require that  for every compact set $K\subset\Omega\setminus\{x_1,\ldots,x_m\}$  the approximating sequence $(u_k)\subset C^1(\Om;\Suno)$ satisfies
$$
 u_k\rightarrow u\quad \text{in }L^\infty(K),
$$
or, in other words, if $u_k\rightarrow u$ in $L^\infty_{\textrm{loc}}(\Om\setminus \{x_1,\ldots,x_m\};\R^2)$.
Therefore we are led to consider 
\begin{align}
\overline{\mathcal{A}}_{L^\infty}(u;\Omega):=
\inf\Big\{\liminf_{k\rightarrow+\infty}\mathcal{A}(u_k;\Omega):\;
&(u_k)\subset C^1(\Om;\R^2), \;u_k\rightarrow u \mbox{ in }L^1(\Om;\R^2)\nonumber\\
&\mbox{ and } u_k\rightarrow u\text{ in }L^\infty_{\textrm{loc}}(\Om\setminus \{x_1,\ldots,x_m\};\R^2)\Big\}.
\end{align}
It is then possible to show that 
\begin{align}\label{inf_rel}
\overline{\mathcal{A}}_{L^\infty}(u;\Omega)=\int_{\Omega}\sqrt{1+|\grad u|^2}dx+\pi\sum_{i=1}^m|d_i|.
\end{align}
Notice that, if one considers the functional $\totvarjac_{L^\infty}$, obtained by relaxing $\totvarjac$ with this notion of convergence, the counterpart of Theorem \ref{teo:relaxation_of_TV_in_the_strict_convergence} does not hold anymore,
since we cannot guarantee a uniform bound on the $L^1$ norm of  $\nabla v_k$, needed to get
\eqref{Jacobian main computation}; however, we gain such a
 control on $\|\nabla v_k\|_{L^1}$ in the area functional, as soon as the approximating sequence $(v_k)$ has bounded area.

The proof of \eqref{inf_rel} is the same of the one of Theorem \ref{relaxed area BV}, with the difference that  we can
deduce straightforwardly the uniform convergence of $(v_k)$ on almost every circumference in $B_{r_1}$,
without passing through \eqref{strict convergence on circumferences bis}.
\end{Remark}

\section{An extension to symmetric
piecewise constant $BV(\Omega;\Suno)$ maps}\label{sec_triple pt}

In this section we prove Theorem \ref{teo:symmetric_triple-point_map}. 
Let us recall that a symmetric triple point map in $\R^2$  is a map $u=u_T:B_\radius(0)\subset\R^2\rightarrow\Suno$ 
taking three values $\{\alpha,\beta,\gamma\} \subset \Suno$,
vertices of an equilateral triangle, 
 on three non-overlapping $2\pi/3$-angular regions $A,B,C$ 
with common vertex at the origin  and interfaces $a,b,c$ (see Figure \ref{image triple}).
\begin{figure}[h]
    \centering
    \includegraphics[scale=0.5]{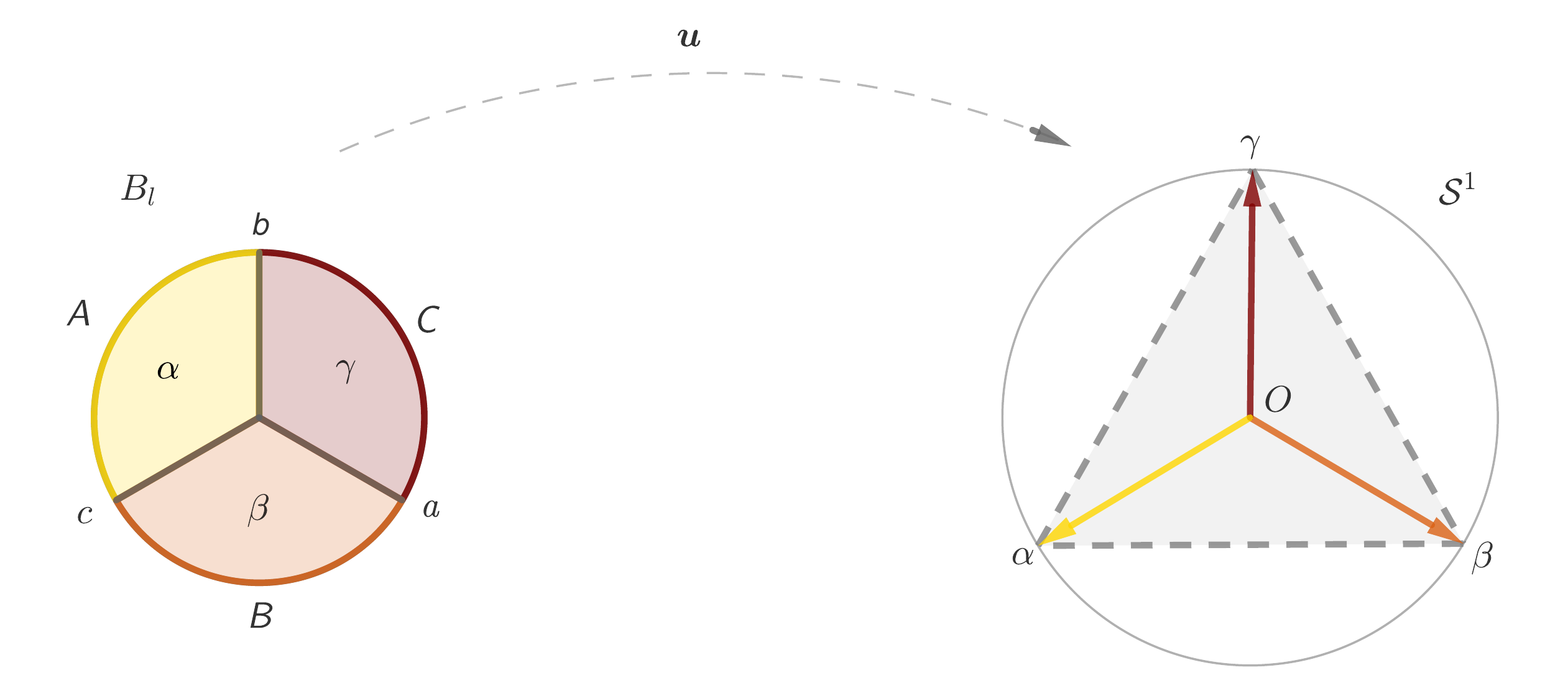}
    \caption{The symmetric triple point map: on the left 
the source disk $B_\radius(0)$, three-sided in the regions $A,B,C$, where $u$ takes the values $\alpha,\beta,\gamma$, depicted in the $\R^2$ target on the right.}
    \label{image triple}
\end{figure}
We denote by $T_{\alpha\beta\gamma}\subset\R^2$ the triangle with vertices $\{\alpha,\beta,\gamma\}$, whose length side is $|\alpha-\beta|=:L=\sqrt{3}$, and by $J_u=a\cup b\cup c$ the jump set of $u$. We have
 $\vert T_{\alpha\beta\gamma}\vert = \frac{\sqrt{3}}{4}L^2=\frac{3\sqrt{3}}{4}$, and  
$|Du|(B_\radius)=L\mathcal H^1(J_u)=3L\radius$.

{\it 
Proof of Theorem \ref{teo:symmetric_triple-point_map}: upper bound}.
For simplicity of notation, in what follows
we write %
$$
\eps {\rm ~in ~place~ of~ } 1/k, 
$$
with
$k \in \mathbb N$.

We construct a recovery sequence $(u^\eps)_\eps\subset\mathrm{Lip}(B_\radius;\R^2)$ as $\eps\rightarrow0^+$.
Let us consider 
the rectangle $$R:=\{(t,s)\in\R^2: t\in(0,\radius), s\in(0,L)\}$$ and,
for $\eps \in (0,\radius)$, the
 functions $m^\eps:R\rightarrow [0,+\infty)$ 
(whose graph is plotted in Figure \ref{m eps}) defined as
\begin{equation}
m^\eps(t,s):=
\begin{cases}
0 & t\in [\eps, \radius]
\\
2\frac{\eps-t}{\eps}\frac{sh}{L} & t \in [0,\eps), \ s \in [0,\frac{L}{2}],
\\
2\frac{\eps-t}{\eps}\frac{(L-s)h}{L} & t \in [0,\eps), \ s\in (\frac{L}{2},L],
\end{cases}
\end{equation}
where $h:=\frac{L}{2\sqrt{3}}=\frac12$. The number $h$ is  
the height of each of the three isosceles 
triangles with common vertex at the origin 
of the target space that decompose $T_{\alpha\beta\gamma}$ 
(see Figure \ref{image triple} right). Let us denote  by $S_\eps^a,S_\eps^b,S_\eps^c$ three tiny stripes 
around $a,b,c$ in $B_\radius$,  of width $\eps$ and length $\radius-\frac{\eps}{2\sqrt{3}}$, 
drawn in Figure \ref{strips}. More explicitely, we have
$$
S_\eps^b:=\left\{(x,y)\in B_\radius:|x|\leq\frac{\eps}{2},y\geq\frac{\eps}{2\sqrt{3}}\right\}
$$
and $S_\eps^a$ ($S_\eps^c$) is obtained by clockwisely rotating $S_\eps^b$ of an angle $\frac{2\pi}{3}$ ($\frac{4\pi}{3}$ respectively) around the origin.

 The idea is to glue $m^\eps$ on each strip in order to build three surfaces embedded in $\R^4$ living in three 
non-collinear copies of $\R^3$, whose total area contribution gives $
\vert T_{\alpha\beta\gamma}\vert$ in the limit $\eps \to 0^+$. 

We introduce  the 
affine diffeomorphism $\psi_\eps:\left[\frac{\eps}{2\sqrt{3}},\radius\right]\rightarrow[0,\radius]$ such that
$$
\psi_\eps'(y)=\frac{\radius}{\radius-\frac{\eps}{2\sqrt{3}}}=:k_\eps\rightarrow1 \quad\mbox{as }\eps\rightarrow 0^+.
$$
Now we can define $u^\eps$ on $S^b_\eps$: we set 
$$
\xi:=\frac{\gamma-\alpha}{L}\in\Suno, \qquad \eta:=-\xi^\perp=\beta,
$$
(where $\xi^\perp$ is the $\frac\pi2$-counterclockwise rotation of $\xi$) 
and
$$
u^\eps(x,y):=
\alpha+ 
\left(\frac{L}{2}+\frac{Lx}{\eps}\right)\xi+m^\eps\left(\psi_\eps(y),\frac{L}{2}+\frac{Lx}{\eps}\right)\eta\qquad\forall (x,y)\in S_\eps^b.
$$
\begin{figure}[t]
    \centering
    \includegraphics[scale=0.5]{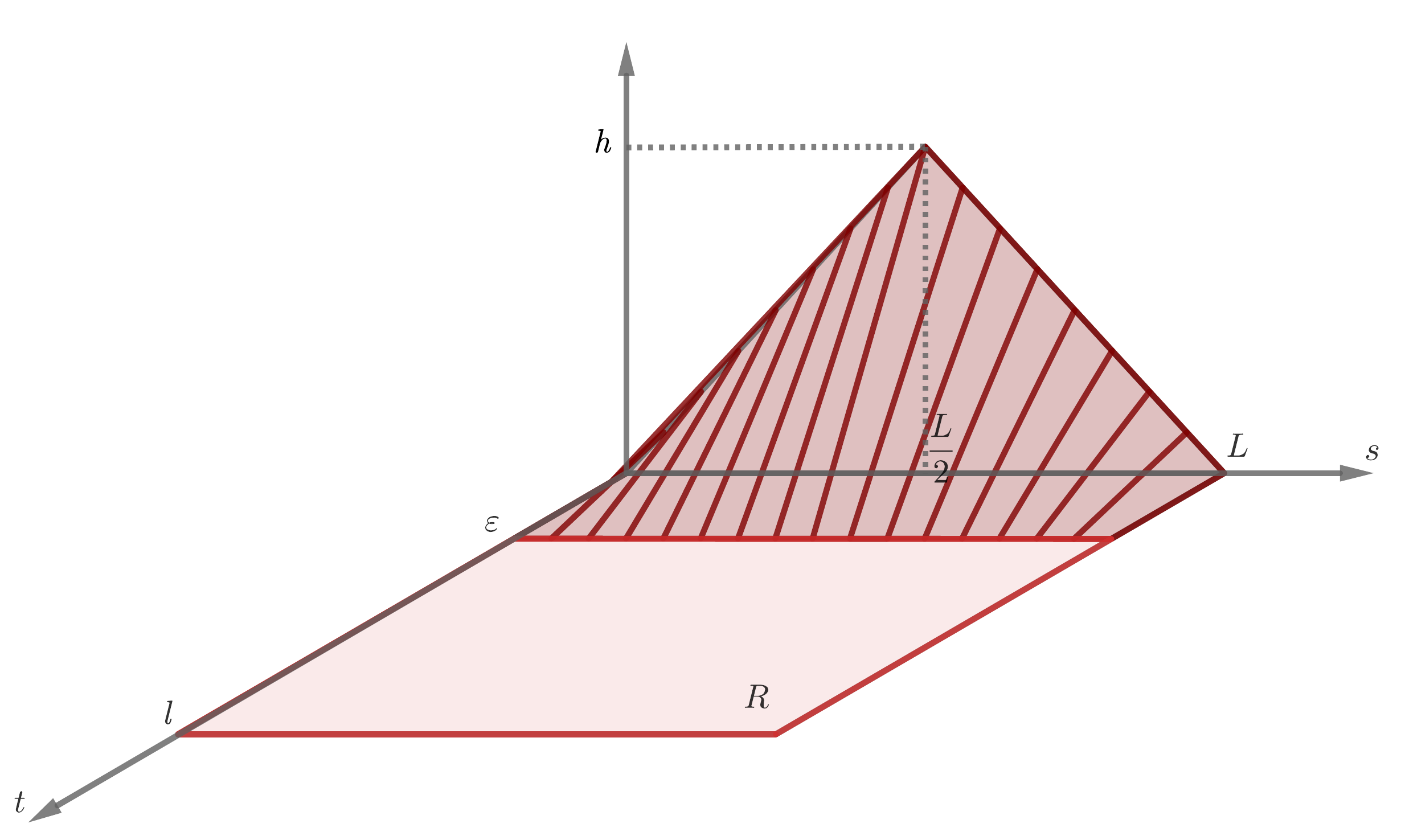}
    \caption{The graph of $m^\eps$ on the rectangle $R$.}
    \label{m eps}
\end{figure}
In a similar way, 
we define $u^\eps$ on $S_\eps^a$ and $S_\eps^c$.
Setting $T^\eps:= \overline{B_{\eps/\sqrt{3}} \setminus (S^a_\eps \cup S^b_\eps \cup S^c_\eps)}$ and $A^\eps:=A\setminus(S_\eps^a\cup S_\eps^b\cup S_\eps^c \cup T^\eps)$, $B^\eps:= B\setminus(S_\eps^a\cup S_\eps^b\cup S_\eps^c \cup T^\eps)$, $C^\eps:=C\setminus(S_\eps^a\cup S_\eps^b\cup S_\eps^c \cup T^\eps)$, we define
\begin{equation}
u^\eps:=
\begin{cases}
\alpha\quad &\mbox{in }A^\eps,
\\
\beta\quad &\mbox{in }B^\eps,
\\
\gamma\quad &\mbox{in }C^\eps.
\end{cases}
\end{equation}
It remains to define $u^\eps$ on the small triangle $T^\eps$. Let us divide it in four triangles $T^a_\eps,T^b_\eps,T^c_\eps,T^0_\eps$ (see Figure \ref{T eps}). So, we set $u^\eps=0$ on $T^0_\eps$ and let $u^\eps$ be the affine function that equals $\alpha$ ($\beta,\gamma$ respectively), in the vertex of $T^\eps$ confining with $A^\eps$ ($B^\eps,C^\eps$ respectively), and equals  $0$ on the edge of $T^0_\eps$.
A direct check shows that the function $u_\eps$ 
is  Lipschitz continuous in $B_\radius$.

Let us compute the area of the graph of $u^\eps$ on $S^b_\eps$: denoting  by $m^\eps_t,m^\eps_s$ the partial derivatives of $m^\eps$, we have
\begin{equation}
\grad u^\eps(x,y)=
\begin{pmatrix}
&\frac{L}{\eps}\xi_1+m^\eps_s(\psi_\eps(y),\frac{L}{2}+\frac{L}{\eps}x)\frac{L}{\eps}\eta_1  &m^\eps_t(\psi_\eps(y),\frac{L}{2}+\frac{L}{\eps}x)k_\eps\eta_1\\[2mm]
&\frac{L}{\eps}\xi_2+m^\eps_s(\psi_\eps(y),\frac{L}{2}+\frac{L}{\eps}x)\frac{L}{\eps}\eta_2  &m^\eps_t(\psi_\eps(y),\frac{L}{2}+\frac{L}{\eps}x)k_\eps\eta_2.
\\
\end{pmatrix}
\end{equation}
Recalling that $\xi\cdot\eta=0$ and
 $|\xi|=|\eta|=1$, we can compute the square of the Frobenius norm of $\grad u^\eps$
\begin{equation}
\begin{aligned}
|\grad u^\eps(x,y)|^2&=\frac{L^2}{\eps^2}\left[\xi_1^2+(m^\eps_s)^2\eta_1^2+2\xi_1\eta_1m^\eps_s+\xi_2^2+(m^\eps_s)^2\eta_2^2+2\xi_2\eta_2m^\eps_s\right]+(m^\eps_t)^2k_\eps^2\eta_1^2+(m^\eps_t)^2k_\eps^2\eta_2^2\\
&=\frac{L^2}{\eps^2}(1+(m^\eps_s)^2)+(m^\eps_t)^2k_\eps^2,
\end{aligned}
\end{equation}
where $m_s^\eps$ and $m_t^\eps$ are evaluated at $\left(\psi_\eps(y),\frac{L}{2}+\frac{L}{\eps}x\right)$.
\begin{figure}[t]
    \centering
    \includegraphics[scale=0.45]{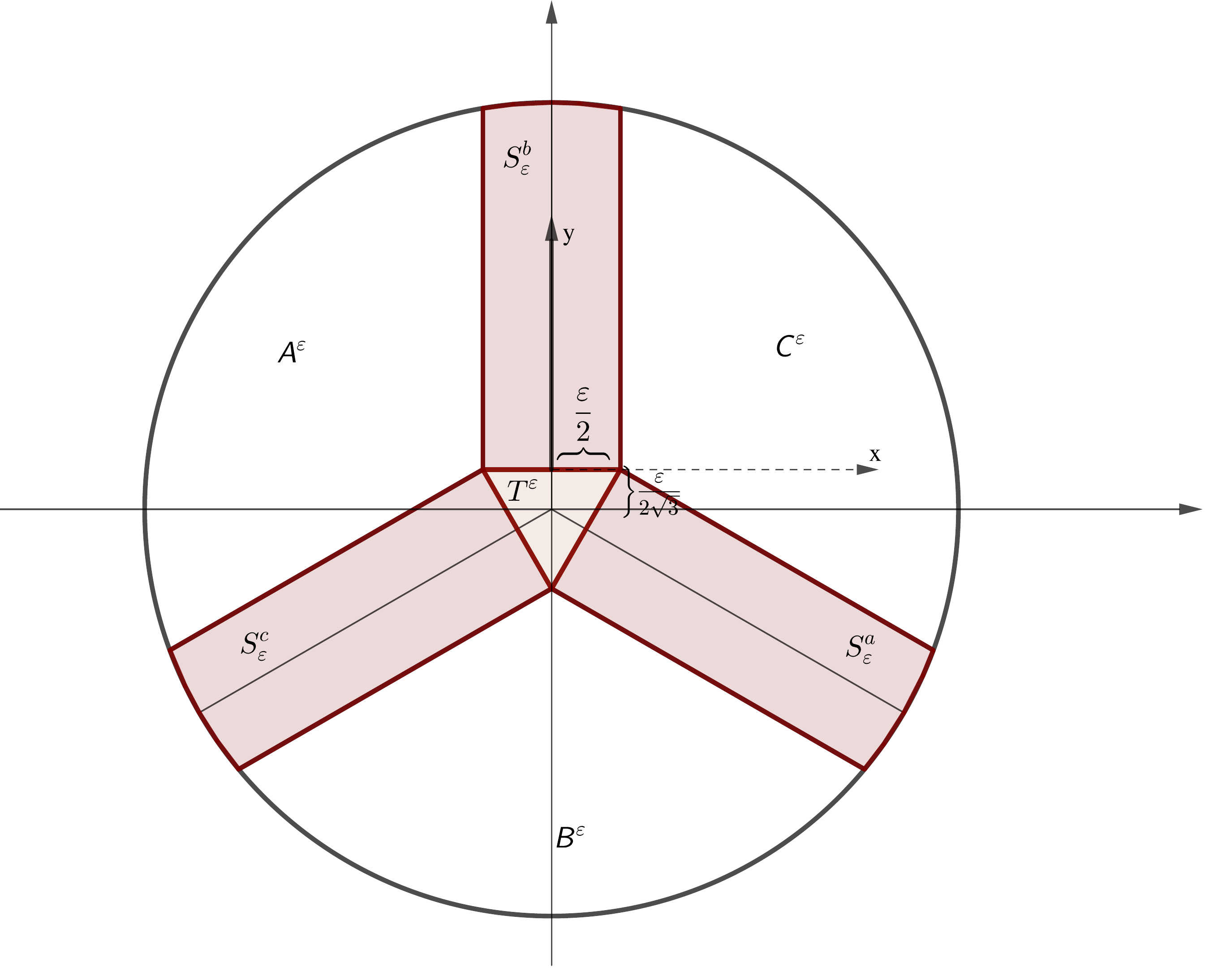}
    \caption{The strips $S_\eps^a,S_\eps^b,S_\eps^c$ and the little triangle $T^\eps$ in the center.}
    \label{strips}
\end{figure}
Moreover, using that   $\xi\cdot\eta^\perp=1$, we have
$$
(\mathrm{det}\grad u^\eps)^2=\frac{k_\eps^2L^2}{\eps^2}\left[(\xi_1\eta_2m^\eps_t+m^\eps_sm^\eps_t\eta_1\eta_2)-(\xi_2\eta_1m^\eps_t+m^\eps_sm^\eps_t\eta_1\eta_2)\right]^2=\frac{k_\eps^2L^2}{\eps^2}(m^\eps_t)^2.
$$
So we have
\begin{equation}
\begin{aligned}
\mathcal{A}(u^\eps;S^b_\eps)&=\int_{S_\eps^b}\sqrt{1+\frac{L^2}{\eps^2}\left(1+(m^\eps_s)^2\right)+(m^\eps_t)^2k_\eps^2+\frac{k_\eps^2L^2}{\eps^2}(m^\eps_t)^2}dxdy\\
&=\frac{L}{\eps}\int_{S^b_\eps}\sqrt{1+m^\eps_s\left(\psi_\eps(y),\frac{L}{2}+\frac{L}{\eps}x\right)^2+m^\eps_t\left(\psi_\eps(y),\frac{L}{2}+\frac{L}{\eps}x\right)^2k_\eps^2\left(1+\frac{\eps^2}{L^2}\right)+O(\eps^2)}dxdy\\
&=\frac{1}{k_\eps}\int_{R\setminus P_\eps}\sqrt{1+m^\eps_s(t,s)^2+m^\eps_t(t,s)^2k_\eps^2\left(1+\frac{\eps^2}{L^2}\right)+O(\eps^2)}dtds,
\end{aligned}
\end{equation}
where in the last equality we have performed the change of variables $$(x,y)=\left(\frac{\eps}{L}\left(s-\frac{L}{2}\right),\psi_\eps^{-1}(t)\right)=:\phi_\eps(t,s)$$ and we have set $P_\eps=R\setminus\phi_\eps^{-1}(S_\eps^b)$.
Notice that $\frac{1}{k_\eps}\rightarrow1$, $k_\eps^2\left(1+\frac{\eps^2}{L^2}\right)\rightarrow1$ as $\eps\rightarrow 0^+$, so that  we get
\begin{equation}\label{inequality for area on strip}
\liminf_{\eps\rightarrow 0^+}\mathcal{A}(u^\eps;S^b_\eps)\leq\int_R1dtds+
\liminf_{\eps\rightarrow 0^+}\int_R|m^\eps_t(t,s)|dtds+
\liminf_{\eps\rightarrow 0^+}\int_R|m^\eps_s(t,s)|dtds.
\end{equation}
Let us compute explicitely the derivatives of $m^\eps$:
\begin{equation}\nonumber
m^\eps_t(t,s)=\left\{
\begin{aligned}
&0 &t>\eps\\
&-2\frac{sh}{\eps L} &t<\eps,s<\frac{L}{2}\\
&-2\frac{(L-s)h}{\eps L} \quad&t<\eps,s>\frac{L}{2}
\end{aligned}
\right.\qquad
m^\eps_s(t,s)=\left\{
\begin{aligned}
&0 &t\geq\eps\\
&2\frac{\eps-t}{\eps}\frac{h}{L} &t<\eps,s<\frac{L}{2}\\
&-2\frac{\eps-t}{\eps}\frac{h}{L} \quad&t<\eps,s>\frac{L}{2}
\end{aligned}
\right.
\end{equation}
Then, we obtain
\begin{equation}\nonumber
\begin{aligned}
&\int_{\{t<\eps,s<\frac{L}{2}\}}|m_t^\eps(t,s)|dtds=\eps\int_{0}^\frac{L}{2}2\frac{sh}{\eps L}ds=\frac{hL}{4}\\
&\int_{\{t<\eps,s>\frac{L}{2}\}}|m_t^\eps(t,s)|dtds=\eps\int_\frac{L}{2}^L2(L-s)\frac{sh}{\eps L}ds=\frac{hL}{4},
\end{aligned}
\end{equation}
so we get
\begin{equation}\label{t derivative}
\int_R|m^\eps_t(t,s)|dtds=\frac{hL}{4}+\frac{hL}{4}=\frac{hL}{2} \quad \forall\eps>0.
\end{equation}
\begin{figure}[t]
    \centering
    \includegraphics[scale=0.5]{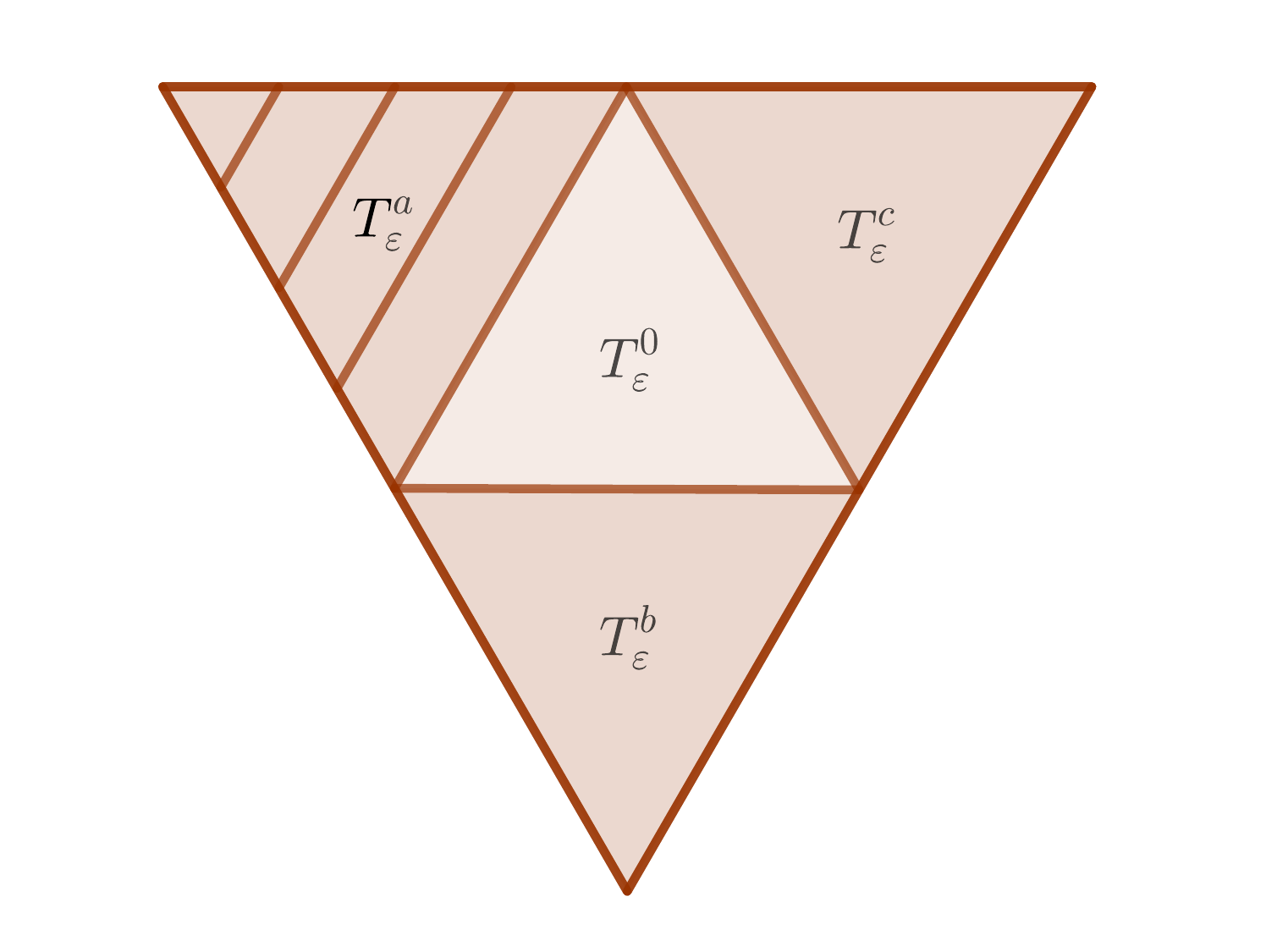}
    \caption{The triangle $T^\eps$ divided further in the four triangles $T^a_\eps,T^b_\eps,T^c_\eps,T^0_\eps$.}
    \label{T eps}
\end{figure}
On the other hand,
$$
\int_{\{t<\eps,s<\frac{L}{2}\}}|m_s^\eps(t,s)|dtds=\int_{\{t<\eps,s>\frac{L}{2}\}}|m_s^\eps(t,s)|dtds=\frac{L}{2}\int_{0}^\eps2\frac{\eps-t}{\eps}\frac{h}{L} ds=O(\eps),
$$
so we get
\begin{equation}\label{s derivative}
\liminf_{\eps\rightarrow 0^+}\int_R|m^\eps_s(t,s)|dtds=0.
\end{equation}
Summarizing, from \eqref{inequality for area on strip} we obtain
$$
\liminf_{\eps\rightarrow 0^+}\mathcal{A}(u^\eps;S^b_\eps)\leq \radius L+\frac{hL}{2}.
$$
In the same way, we can prove that
$$
\liminf_{\eps\rightarrow 0^+}\mathcal{A}(u^\eps;S^a_\eps)=\liminf_{\eps
\rightarrow 0^+}\mathcal{A}(u^\eps;S^c_\eps)\leq \radius L+\frac{hL}{2}.
$$
Clearly, the definition of $u^\eps$ on $A^\eps,B^\eps,C^\eps$ provides that
$$
\lim_{\eps\rightarrow 0^+}\mathcal{A}(u^\eps;A^\eps\cup B^\eps\cup C^\eps)=|B_\radius|=\pi \radius^2.
$$
It remais to show that  the area contribution on $T^\eps$ is infinitesimal: 
first notice that
$$
\mathcal{A}(u^\eps;T_\eps^0)=|T_\eps^0|=O(\eps^2).
$$
Moreover on $T_\eps^a$ (respectively $T_\eps^b, T_\eps^c$) $u^\eps$ is the affine parameterization of the segment $(\alpha,0)$ (respectively $(\beta,0),(\gamma,0)$) of the target space, therefore on $T^\eps\setminus T^0_\eps$ the area integrand has no Jacobian contribution and so is $O(\eps^{-1})$, giving
$$
\mathcal{A}(u^\eps;T_\eps^a)=\mathcal{A}(u^\eps;T_\eps^b)=\mathcal{A}(u^\eps;T_\eps^c)=O(\eps).
$$
Then we have
$$
\mathcal{A}(u^\eps;T^\eps)=\mathcal{A}(u^\eps;T_\eps^0)+\mathcal{A}(u^\eps;T_\eps^a)+\mathcal{A}(u^\eps;T_\eps^b)+\mathcal{A}(u^\eps;T_\eps^c)=O(\eps^2)+O(\eps).
$$
In the end, we conclude
$$
\liminf_{\eps\rightarrow ^+0}\mathcal{A}(u^\eps;B_\radius)\leq\pi \radius^2+3\radius L+3\frac{hL}{2},
$$
where we recognize that 
the last quantity on the right-hand side is exactly $|T_{\alpha\beta\gamma}|$.

As a final step, we have to check that $(u^\eps)$ converges to $u$ strictly $BV(B_\radius;\R^2)$.
Clearly $u^\eps\rightarrow u$ in $L^1(B_\radius;\R^2)$. Let us compute the total variation of $u^\eps$: we have
$$
|Du^\eps|(B_\radius)=|Du^\eps|(S_\eps^a)+|Du^\eps|(S_\eps^b)+|Du^\eps|(S_\eps^c)+|Du^\eps|(T^\eps).
$$
In particular,
$$
|Du^\eps|(T^\eps)\leq\mathcal{A}(u^\eps;T^\eps)\rightarrow0 \quad\mbox{as }
\eps\rightarrow 0^+.
$$
Computing the variation on the strip $S_\eps^b$ (similarly for the other strips) we find
\begin{equation}\nonumber
\begin{aligned}
|Du^\eps|(S_\eps^b)&=\int_{S_\eps^b}\sqrt{\frac{L^2}{\eps^2}\left(1+(m^\eps_s)^2\right)+(m^\eps_t)^2k_\eps^2}dxdy\\
&=\frac{L}{\eps}\int_{S^b_\eps}\sqrt{1+m^\eps_s\left(\psi_\eps(y),\frac{L}{2}+\frac{L}{\eps}x\right)^2+m^\eps_t\left(\psi_\eps(y),\frac{L}{2}+\frac{L}{\eps}x\right)^2k_\eps^2\frac{\eps^2}{L^2}}dxdy\\
&=\frac{1}{k_\eps}\int_{R\setminus P_\eps}\sqrt{1+m^\eps_s(t,s)^2+m^\eps_t(t,s)^2k_\eps^2\frac{\eps^2}{L^2}}dtds.
\end{aligned}
\end{equation}
Then, using \eqref{t derivative} and \eqref{s derivative}, we conclude
\begin{equation}\nonumber
\begin{aligned}
\limsup_{\eps\rightarrow 0^+}|Du^\eps|(S_\eps^b)&\leq\int_R 1dtds+\limsup_{\eps\rightarrow 0^+}\int_R|m^\eps_s(t,s)|dtds+O(\eps)\limsup_{\eps\rightarrow 0^+}\int_R|m^\eps_t(t,s)|dtds= \radius L,
\end{aligned}
\end{equation}
so that
$$
\limsup_{\eps\rightarrow 0^+}|Du^\eps|(B_\radius)\leq 3 \radius L.
$$
By the lower semicontinuity of the variation, we get also 
$$
\liminf_{\eps\rightarrow 0^+}|Du^\eps|(B_\radius)\geq |Du|(B_\radius)=3\radius L,
$$
which shows the desired convergence of $(u^\eps)$ to
$u$ strictly $BV(B_\radius;\R^2)$.
\qed

\medskip

Before proving the lower bound, similarly to 
Lemma 
\ref{tangential strict convergence}, 
we show 
that the strict $BV$
convergence is inherited to almost
every circumference centered at the origin.

\begin{Lemma}[\textbf{Inheritance}]\label{strict convergence on circumference for triple point}
Lemma 
\ref{tangential strict convergence} holds with $\triplemap$ in place
of $u$.
\end{Lemma}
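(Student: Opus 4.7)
The plan is to mimic the proof of Lemma \ref{tangential strict convergence}, with measure-theoretic modifications required since $\triplemap \in BV(B_\radius;\Suno)\setminus W^{1,1}$. I would begin by setting $\widetilde \triplemap(\rho,\theta):=\triplemap(\rho\cos\theta,\rho\sin\theta)$ on $R=(0,\radius)\times(0,2\pi)$. Being piecewise constant on three angular sectors separated by three radial jump lines, $\widetilde \triplemap$ lies in $BV(R;\R^2)$ with $|\partial_\theta \widetilde \triplemap|(R) = 3L\radius$; by elementary $BV$ slicing, $\triplemap\mres \partial B_r \in BV(\partial B_r;\R^2)$ and $|D(\triplemap \mres \partial B_r)|(\partial B_r) = 3L$ for every $r\in(0,\radius)$, so in particular
\[
\int_0^\radius |D(\triplemap\mres \partial B_r)|(\partial B_r)\,dr = 3L\radius = |D\triplemap|(B_\radius).
\]

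The crucial step, replacing \eqref{Reshetnyak application}, is to apply Theorem \ref{Reshetnyak} on the open set $B_\radius\setminus\{0\}$ to $f(x,\sigma):=\sqrt{|\sigma_{\rm hor}\cdot\tau(x)|^2+|\sigma_{\rm vert}\cdot\tau(x)|^2}$, which is in $C_b((B_\radius\setminus\{0\})\times\mathbb S^3)$. This is legitimate because the strict $BV$ convergence of $(v_k)$ to $\triplemap$ persists on $B_\radius\setminus\{0\}$: indeed $|D\triplemap|(\{0\})=0=|Dv_k|(\{0\})$, so $|Dv_k|(B_\radius\setminus\{0\})\to|D\triplemap|(B_\radius\setminus\{0\})$ and $Dv_k\stackrel{*}{\rightharpoonup}D\triplemap$ on $B_\radius\setminus\{0\}$. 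The key geometric ingredient, replacing the $W^{1,1}$ computation of the original lemma, is that $J_{\triplemap}$ lies in three radial segments, so the jump normal coincides (up to sign) with $\tau$; hence $\bigl|\tfrac{D\triplemap}{|D\triplemap|}(x)\,\tau(x)\bigr|=1$ for $|D\triplemap|$-a.e. $x$, and consequently
\[
\int_{B_\radius\setminus\{0\}}\Bigl|\tfrac{D\triplemap}{|D\triplemap|}\,\tau\Bigr|\,d|D\triplemap| = |D\triplemap|(B_\radius) = 3L\radius.
\]
Combined with the polar-coordinates identity $\int_{B_\radius}|(\nabla v_k)\tau|\,dx = \int_R|\partial_\theta \widetilde v_k|\,d\rho\,d\theta$ (valid since $v_k\in C^1$), Reshetnyak yields $\lim_k \int_R |\partial_\theta \widetilde v_k|\,d\rho\,d\theta = 3L\radius$.

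From here, the argument follows verbatim the second half of Lemma \ref{tangential strict convergence}: by $L^1(B_\radius)$-convergence and Fubini, $v_k\mres \partial B_r\to \triplemap \mres \partial B_r$ in $L^1(\partial B_r;\R^2)$ for a.e. $r$; the lower semicontinuity of the tangential variation on each circle, integration in $r$, and Fatou's lemma close the chain
\[
3L\radius = \int_0^\radius |D(\triplemap\mres \partial B_r)|(\partial B_r)\,dr \le \int_0^\radius \liminf_k \int_{\partial B_r}\Bigl|\tfrac{\partial v_k}{\partial s}\Bigr|\,ds\,dr \le \liminf_k \int_R |\partial_\theta \widetilde v_k|\,d\rho\,d\theta = 3L\radius,
\]
so all inequalities are equalities. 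Therefore $\liminf_k|D(v_k\mres\partial B_r)|(\partial B_r)=|D(\triplemap\mres\partial B_r)|(\partial B_r)$ for a.e. $r$, and extracting a subsequence realizing this liminf together with the $L^1$-convergence on $\partial B_r$ yields the desired strict $BV(\partial B_r;\R^2)$-convergence. I expect the main obstacle to be the geometric/measure-theoretic computation that $D\triplemap$ is \emph{purely tangential} with respect to the circular foliation, which is precisely what allows Reshetnyak to recover the full variation of $D\triplemap$ without loss.
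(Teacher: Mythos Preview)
Your proof is correct, but it takes a more elaborate route than the paper's. The paper bypasses Reshetnyak entirely: after the slice computation $|D(\triplemap\mres\partial B_\rho)|(\partial B_\rho)=3L$ and the lower semicontinuity step, it simply uses the crude pointwise bound $\bigl|\tfrac{\partial v_k}{\partial s}\bigr|\le|\nabla v_k|$ together with the coarea formula to close the chain directly against $\int_{B_\radius}|\nabla v_k|\,dx$, which converges to $|D\triplemap|(B_\radius)=3L\radius$ by the very definition of strict convergence. The geometric fact you isolate---that $D\triplemap$ is purely tangential with respect to the circular foliation---is exactly what makes this crude bound sharp, but the paper exploits it implicitly rather than through Reshetnyak. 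Your approach has the virtue of following the template of Lemma~\ref{tangential strict convergence} to the letter and making the geometric mechanism explicit; the paper's is shorter and avoids the measure-theoretic overhead of restricting to $B_\radius\setminus\{0\}$ and computing the polar density of $D\triplemap$.
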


\begin{proof}
Let $\rho<\radius$ and $u$ be the triple point map; clearly
\begin{equation}\label{Var on circumference}
|D(u\mres\partial B_\rho)|(\partial B_\rho)=3L.
\end{equation}
On the other hand, since $(v_k)$ converges to $u$ in $L^1$, 
for almost every $\rho<\radius$ we have
$v_k\mres\partial B_\rho\rightarrow u\mres\partial B_\rho \quad\mbox{in }L^1(\partial B_\rho;\R^2)$,
and by lower semicontinuity we infer that
\begin{equation}
\label{eq:semicontinuity_on_partial_B_rho}
|D(u\mres\partial B_\rho)|(\partial B_\rho)\leq\liminf_{k\rightarrow +\infty}\int_{\partial B_\rho}\left|\frac{\partial v_k}{\partial s}\right|ds \qquad\mbox{for a.e. } \rho<\radius.
\end{equation}
Integrating with respect to $\rho\in(0,\radius)$, by \eqref{Var on circumference} 
and Fatou's lemma, we have
\begin{equation}\label{chain of inequalities}
|Du|(B_\radius)=3\radius L=\int_0^\radius|D(u\mres\partial B_\rho)|(\partial B_\rho)d\rho\leq\int_0^\radius \liminf_{k\rightarrow +\infty}\int_{\partial B_\rho}\left|\frac{\partial v_k}{\partial s}\right|dsd\rho\leq\liminf_{k\rightarrow +\infty}\int_{B_\radius}|\nabla v_k|dx.
\end{equation}
By assumption, $(v_k)$ converges to $u$ strictly $BV(B_\radius;\R^2)$, so we have all equalities in \eqref{chain of inequalities}, in particular,
using \eqref{eq:semicontinuity_on_partial_B_rho},
$$
|D(u\mres\partial B_\rho)|(\partial B_\rho)=\liminf_{k\rightarrow +\infty}
\int_{\partial B_\rho}\left|\frac{\partial v_k}{\partial s}\right|ds \qquad\mbox{for a.e. } \rho<\radius.
$$
Upon extracting a suitable subsequence $(v_{k_h})$ depending on $\rho$ we get the conclusion.
\end{proof}


{\it Proof of Theorem \ref{teo:symmetric_triple-point_map} (lower bound)}. 
Let $(v_k)\subset C^1(B_\radius;\R^2)$ be a recovery sequence, i.e., 
$$
v_k\rightarrow u\quad\mbox{strictly }BV(B_\radius;\R^2)\qquad\mbox{and}\qquad\lim_{k\rightarrow +\infty}\mathcal{A}(v_k;B_\radius)=
\overline{\mathcal{A}}_{BV}(u;B_\radius).
$$
Fix $\rho\in (0,\radius)$ and a subsequence $(v_{k_h})$ of $(v_k)$ whose restriction to $\partial B_\rho$ converges to $u\mres\partial B_\rho$ strictly $BV(\partial B_\rho;\R^2)$, as in Lemma \ref{strict convergence on circumference for triple point}. For simplicity, let us still denote $v_{k_h}$ by $v_k$. 

Let us  split the area functional as
$$
\mathcal{A}(v_k;B_\radius)=\mathcal{A}(v_k;B_\radius\setminus B_\rho)+\mathcal{A}(v_k; B_\rho).
$$
On $B_\radius\setminus B_\rho$ we still have $L^1$-convergence 
of $(v_k)$ to $u$, but $u\mres( B_\radius\setminus B_\rho)$ has no triple points, so by Theorem 3.14 of \cite{AD},
\begin{equation*}
\begin{aligned}
\liminf_{k\rightarrow +\infty}\mathcal{A}(v_k;B_\radius\setminus B_\rho)&\geq\overline{\mathcal{A}}_{L^1}(u;B_\radius\setminus B_\rho)=\int_{B_r\setminus B_\rho}|\sqrt{1+|\nabla u|^2}dx+|D^ju|(B_\radius\setminus B_\rho)\\
&=|B_\radius\setminus B_\rho|+3L(\radius-\rho)=\pi(\radius^2-\rho^2)+3L(\radius-\rho).
\end{aligned}
\end{equation*}
Therefore
\begin{equation}\label{lower bound for the limit}
\begin{aligned}
\lim_{k\rightarrow +\infty}\mathcal{A}(v_k;B_\radius)\geq & 
\liminf_{k\rightarrow +\infty}\mathcal{A}(v_k;B_\radius\setminus B_\rho)
+\liminf_{k\rightarrow +\infty}\mathcal{A}(v_k; B_\rho)
\\
\geq& \pi(\radius^2-\rho^2)+3L(\radius-\rho)+\liminf_{k\rightarrow +\infty}\int_{B_\rho}|Jv_k|dx,
\end{aligned}
\end{equation}
where as usual $J v_k := {\rm det} \grad v_k$.

Let us prove that
\begin{equation}\label{lower bound for the Jacobian liminf}
\liminf_{k\rightarrow +\infty}\int_{B_\rho}|Jv_k|dx\geq 
\vert 
T_{\alpha\beta\gamma}\vert,
\end{equation}
from which 
the lower bound in \eqref{eq:relaxed_triple-point} is obtained
 by passing to the limit as $\rho\rightarrow 0^+$ in \eqref{lower bound for the limit}.
Now we observe that, since $v_k$ is Lipschitz on $B_\rho$, it satisfies the following identity (see \eqref{int_det_bordo})
$$
\int_{B_\rho}Jv_kdx= \frac{1}{2}\int_{\partial B_\rho}\Big((v_k)_1\frac{\partial (v_k)_2}{\partial s}-(v_k)_2\frac{\partial (v_k)_1}{\partial s}\Big)ds\quad\forall k\in\N.
$$

\noindent
Let us parametrize $\partial B_\rho$ from $[0,2\pi)$ and set $\widetilde{v}_k(t):=v_k({s}(t))$ for $t \in [0,2\pi)$; then
\begin{equation}\nonumber
\begin{aligned}
(\dot{\widetilde{v}}_{k})_i(t)=\frac{d}{dt}(v_{k})_i
({s}(t))=\rho\frac{\partial (v_k)_i}{\partial s}(s(t)), \qquad i=1,2.
\end{aligned}
\end{equation}
Thus we get
$$
 \int_{\partial B_\rho}\Big((v_k)_1\frac{\partial (v_k)_2}{\partial s}-(v_k)_2\frac{\partial (v_k)_1}{\partial s}\Big)ds=\int_0^{2\pi}\left(({\widetilde{v}}_{k})_1(t)(\dot{\widetilde{v}}_{k})_2(t)-({\widetilde{v}}_{k})_2(t)(\dot{\widetilde{v}}_{k})_1(t)\right)dt.
$$
Denoting  ${\widetilde{v}}_{k}(t)$ 
simply by ${{v}}_{k}(t)$, we can write
$$
\int_{B_\rho}Jv_kdx=\frac{1}{2}\int_0^{2\pi}\left({({v}}_{k})_1(t)(\dot{{v}}_{k})_2(t)-({{v}}_{k})_2(t)(\dot{{v}}_{k})_1(t)\right)dt.
$$
To show \eqref{lower bound for the Jacobian liminf} it is sufficient to prove that 
\begin{equation}\label{eq:claim for the Jacobian limit}
\liminf_{k\rightarrow +\infty}\frac{1}{2}\int_0^{2\pi}
\left({({v}}_{k})_1(t)(\dot{{v}}_{k})_2(t)-({{v}}_{k})_2(t)(\dot{{v}}_{k})_1(t)\right)dt
\geq \vert T_{\alpha\beta\gamma} \vert,
\end{equation} 
since obviously
$$
\int_{B_\rho}|Jv_k|dx\geq\left|\int_{B_\rho}Jv_kdx\right|.
$$
In order to show \eqref{eq:claim for the Jacobian limit}, denote by $\theta_1\in[0,2\pi)$ (respectively $\theta_2,\theta_3$) the angle of the 
middle point of the arc $C \cap \partial B_\rho$ (respectively $A \cap 
\partial B_\rho$, $B \cap \partial B_\rho$) and write
\begin{equation}
\begin{aligned}
& \frac{1}{2}\int_0^{2\pi}\left(({v}_{k})_1(t)(\dot{{v}}_{k})_2(t)-
({{v}}_{k})_2(t)(\dot{{v}}_{k})_1(t)\right)dt 
\\
&= \frac{1}{2}\int_{\theta_1}^{\theta_2}\left({({v}}_{k})_1(t)(\dot{{v}}_{k})_2(t)-
({{v}}_{k})_2(t)(\dot{{v}}_{k})_1(t)\right)dt
\\
&\;\;+\frac{1}{2}\int_{\theta_2}^{\theta_3}\left({({v}}_{k})_1(t)(\dot{{v}}_{k})_2(t)-({{v}}_{k})_2(t)(\dot{{v}}_{k})_1(t)\right)dt\\
&\;\;+\frac{1}{2}\int_{\theta_3}^{\theta_1}\left({({v}}_{k})_1(t)(\dot{{v}}_{k})_2(t)-({{v}}_{k})_2(t)(\dot{{v}}_{k})_1(t)\right)dt.
\end{aligned}
\end{equation}
Notice that, as a consequence of  Lemma \ref{strict convergence on circumference for triple point},  $v_k$ converges to $u$ strictly $BV([\theta_1,\theta_2];\R^2)$. Furthermore, by restricting $v_k$ to $[\theta_1,\theta_1+\delta]$, for a small $\delta>0$, as a consequence of Proposition \ref{strict implies uniform} we see that $v_k$ converges uniformly to $v\equiv \gamma$ on $[\theta_1,\theta_1+\delta]$. In particular we have
$$\lim_{k\rightarrow \infty}v_k(\theta_1)=\gamma.$$
Similarly $v_k$ will tend to $\alpha$ and $\beta$ in $\theta_2$ and $\theta_3$, respectively.
 We set
$$
L_k:=\int_{\theta_1}^{\theta_2}|\dot{v}_k(t)|dt,\qquad z(t)=z_k(t):=\int_{\theta_1}^t|\dot{v}_k(\tau)|d\tau, \quad t \in [\theta_1,\theta_2].
$$
Denoting by $t(z)$ the inverse of $z(t)$, we define $w_k:[0,L_k]\rightarrow\R^2$ as
$$
w_k(z)=v_k(t(z)).
$$
Then we have
$$
w_k'(z)=\dot{v}_k(t(z))\frac{dt}{dz}=\frac{\dot{v}_k(t(z))}{|\dot{v}_k(t(z))|},\quad dt=\frac{1}{|\dot{v}_k(t(z))|}dz.
$$
Thus
\begin{equation}\label{change of variable by reparameterization}
\begin{aligned}
\frac{1}{2}\int_{\theta_1}^{\theta_2}\left(({v}_{k})_1(t)(\dot{{v}}_{k})_2(t)-({{v}}_{k})_2(t)(\dot{{v}}_{k})_1(t)\right)dt=\frac{1}{2}\int_0^{L_k}\left((w_{k})_1(z)(w_k')_{2}(z)-(w_k)_{2}(z)(w_k')_{1}(z)\right)dz.
\end{aligned}
\end{equation}
We also have 
$$
\lim_{k\rightarrow +\infty} L_k=
\lim_{k\rightarrow +\infty} \int_{\theta_1}^{\theta_2}|\dot{v}_k(t)|dt=|Du|\mres\{y\in\partial B_\rho: \mathrm{arg}(y)\in[\theta_1,\theta_2]\}=|\gamma-\alpha|=L.
$$ 
Then, $(w_k)_k$ is uniformly Lipschitz continuous on $[0,L_k]$. We 
further reparametrize it on $[0,L]$ by a multiple of the arc length parameter. Still denoting the obtained function by $(w_k)_k$, we see that 
 $w_k$ is uniformly bounded in $W^{1,\infty}([0,L];\R^2)$ so, upon 
passing to a (not relabelled) subsequence, we have
$$
w_k\stackrel{*}{\rightharpoonup} w\quad \mbox{w}^*\mbox{-}W^{1,\infty}([0,L];\R^2),
$$
for some $w\in W^{1,\infty}([0,L];\R^2)$.
Hence, we can pass to the limit in \eqref{change of variable by reparameterization}, which now reads
\begin{equation}\label{passage to limit}
\frac{1}{2}\int_0^{L}\left((w_{k})_1(z)(w_k')_{2}(z)-(w_k)_{2}(z)(w_k')_{1}(z)\right)dz
\xrightarrow{k\rightarrow +\infty} \frac{1}{2}\int_0^{L}\left({{w}}_{1}(z)w'_{2}(z)-{{w}}_{2}(z)w'_{1}(z)\right)dz.
\end{equation}
Recalling that 
\begin{equation*}
\begin{aligned}
&w(0)=\lim_{k\rightarrow +\infty}w_k(0)=\lim_{k\rightarrow +\infty}v_k(\theta_1)=\gamma,
\\
&w(L)=\lim_{k\rightarrow +\infty}w_k(L)=\lim_{k\rightarrow +\infty}w_k(L_k)=\lim_{k\rightarrow +\infty}v_k(\theta_2)=\alpha,
\end{aligned}
\end{equation*}
we see that  $w$ is a $1$-Lipschitz curve on $[0,L]$
 starting from $\gamma$ and ending at $\alpha$;
 therefore it must coincide with the unit
 speed parameterization of the 
segment connecting $\gamma$ to $\alpha$, i.e.,
$$
w(z)=\gamma+\frac{\alpha-\gamma}{L}z.
$$
So, we can easily compute the limit integral in \eqref{passage to limit}:
\begin{equation}\nonumber
\begin{aligned}
\frac{1}{2}\int_0^{L}\left({{w}}_{1}(z)w'_{2}(z)-{{w}}_{2}(z)w'_{1}(z)\right)dz&=-\frac{1}{2}\int_0^{L}\left(\gamma+\frac{\alpha-\gamma}{L}z\right)\cdot\frac{(\alpha-\gamma)^\perp}{L}dz=-\frac{1}{2}\gamma\cdot(\alpha-\gamma)^\perp\\
&=\frac{1}{2}(\gamma_1\alpha_2-\gamma_2\alpha_1)=\vert T_{\alpha0\gamma}\vert,
\end{aligned}
\end{equation}
where $T_{\alpha0\gamma}$ is the triangle with vertices $\alpha$, $\gamma$ and the origin $0$. We conclude that 
$$
\lim_{k\rightarrow+\infty}\frac{1}{2}\int_{\theta_1}^{\theta_2}\left(({v}_{k})_1(t)(\dot{v}_k)_{2}(t)-({v}_{k})_2(t)(\dot{v}_k)_{2}(t)\right)dt= \vert T_{\alpha0\gamma}\vert.
$$
In a similar way, one can prove that
\begin{equation}\nonumber
\begin{aligned}
&\lim_{k\rightarrow +\infty}\frac{1}{2}\int_{\theta_2}^{\theta_3}\left(({v}_{k})_1(t)(\dot{v}_k)_{2}(t)-({v}_{k})_2(t)(\dot{v}_k)_{2}(t)\right)dt= \vert T_{\alpha0\beta}\vert 
\\
&\lim_{k\rightarrow +\infty}\frac{1}{2}\int_{\theta_3}^{\theta_1}\left(({v}_{k})_1(t)(\dot{v}_k)_{2}(t)-({v}_{k})_2(t)(\dot{v}_k)_{2}(t)\right)dt= \vert T_{\beta0\gamma}\vert,
\end{aligned}
\end{equation}
and \eqref{eq:claim for the Jacobian limit} follows.
\qed

\begin{Remark}
A result similar to Theorem \ref{teo:symmetric_triple-point_map} holds, up to trivial modifications, when $u:B_\radius(0) \to \Suno$ is a symmetric $n$-junction map,
taking (in the order) the values $\alpha_1,\ldots,\alpha_n$ vertices of the regular $n$-gon $P_{\alpha_1\cdots\alpha_n}$ inscribed in the 
unit circle, on $n$ non-overlapping $2\pi/n$-angular regions
with common vertex at the origin. In formulas, let $L$ be the side of $P_{\alpha_1\cdots\alpha_n}$ and $h$ be the height of each isosceles triangle 
that decomposes $P_{\alpha_1\cdots\alpha_n}$, then there holds the following 
\end{Remark}
\begin{Corollary}\label{n point area}
Let $u:B_\radius(0) \to \Suno$ be a symmetric $n$-junction map. Then
$$
\overline{\mathcal{A}}_{BV}(u,B_\radius )=|B_\radius |+|Du|(B_\radius)+|P_{\alpha_1\cdots\alpha_n}|=\pi \radius^2+nL\radius+\frac{n}{2}hL.
$$
\end{Corollary}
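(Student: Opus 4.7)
\textbf{Proof proposal for Corollary \ref{n point area}.} The plan is to follow the proof of Theorem \ref{teo:symmetric_triple-point_map} nearly verbatim, replacing the equilateral triangle $T_{\alpha\beta\gamma}$ by the regular $n$-gon $P_{\alpha_1\cdots\alpha_n}$ and the three $2\pi/3$-sectors by $n$ angular sectors of opening $2\pi/n$. Geometrically, the $n$ congruent isosceles triangles with common vertex at the origin that decompose $P_{\alpha_1\cdots\alpha_n}$ have base $L$ and height $h=\frac{L}{2}\cot(\pi/n)=\cos(\pi/n)$, so that $|P_{\alpha_1\cdots\alpha_n}|=\frac{n}{2}hL$ and $|Du|(B_\radius)=nL\radius$. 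The first two summands in the claimed value already agree with $|B_\radius|+|Du|(B_\radius)$, so everything reduces to producing the polygonal contribution.

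For the upper bound, I would construct $(u^\eps)\subset\mathrm{Lip}(B_\radius;\R^2)$ by introducing $n$ strips $S_\eps^1,\dots,S_\eps^n$ of width $\eps$ around the $n$ jump segments of $u$ and, on each of them, pasting a rotated copy of the tent function $m^\eps$ used in the triple-point argument (with the new value of $h$), placed in the two-dimensional plane spanned by the corresponding target edge of $P_{\alpha_1\cdots\alpha_n}$ and its inward normal. The identical calculation as in Section \ref{sec_triple pt} then yields $\liminf_\eps \mathcal{A}(u^\eps;S_\eps^i)\le \radius L+\frac{hL}{2}$ for every $i$. Outside the strips and the small central polygonal region $T^\eps$ the map $u^\eps$ is locally constant and contributes $|B_\radius|$ in the limit, while the subdivision of $T^\eps$ contributes $O(\eps)$. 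Summing over $i=1,\dots,n$ gives $\limsup_\eps\mathcal{A}(u^\eps;B_\radius)\le \pi\radius^2+nL\radius+\frac{n}{2}hL$, and the strict $BV$-convergence $u^\eps\to u$ is verified exactly as before.

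For the lower bound, the inheritance Lemma \ref{strict convergence on circumference for triple point} extends with $3L$ replaced by $nL$, since its proof only uses that $|D(u\mres\partial B_\rho)|(\partial B_\rho)$ is a constant in $\rho\in(0,\radius)$. Given a recovery sequence $(v_k)\subset C^1(B_\radius;\R^2)$ converging strictly to $u$, I would fix $\rho\in(0,\radius)$, pass to a subsequence converging strictly on $\partial B_\rho$, and decompose
\begin{equation*}
\mathcal{A}(v_k;B_\radius)\ge \mathcal{A}(v_k;B_\radius\setminus B_\rho)+\int_{B_\rho}|Jv_k|\,dx.
\end{equation*}
By \cite[Theorem 3.14]{AD}, since $u\mres(B_\radius\setminus B_\rho)$ has no junction point, the first term is bounded below by $\pi(\radius^2-\rho^2)+nL(\radius-\rho)$. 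For the Jacobian term, I would use the boundary representation \eqref{int_det_bordo} and split $[0,2\pi)$ at the $n$ midpoints $\theta_1,\dots,\theta_n$ of the arcs on which $u$ is constant. On each subinterval $[\theta_i,\theta_{i+1}]$, reparametrizing $v_k$ by arc-length on the image yields a uniformly Lipschitz sequence $w_k^i$ on $[0,L_k^i]$ with $L_k^i\to L$; rescaling to $[0,L]$ and passing to the weak-$*$ $W^{1,\infty}$ limit, the boundary constraints $v_k(\theta_i)\to\alpha_i$ forced by Proposition \ref{strict implies uniform} identify the limit with the unit-speed segment from $\alpha_i$ to $\alpha_{i+1}$. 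A direct computation, identical to the triple-point one, gives that the $i$-th contribution tends to the signed area $|T_{0\alpha_i\alpha_{i+1}}|$.

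Summing the $n$ contributions reconstructs $|P_{\alpha_1\cdots\alpha_n}|=\frac{n}{2}hL$, and letting $\rho\to 0^+$ yields the lower bound. The only bookkeeping point that requires care, and which I regard as the main (minor) obstacle, is to ensure that the $n$ triangle contributions combine with consistent signs rather than cancelling: this is guaranteed by the counter-clockwise orientation inherited from the midpoint subdivision of $\partial B_\rho$ together with the cyclic ordering of $\alpha_1,\dots,\alpha_n$ on $\Suno$. Apart from this, the argument is a direct adaptation of the $n=3$ case with no essentially new analytic difficulty.
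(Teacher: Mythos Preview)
Your proposal is correct and follows precisely the route the paper indicates: the authors do not give a separate proof of Corollary \ref{n point area} but merely state that the argument for Theorem \ref{teo:symmetric_triple-point_map} carries over ``up to trivial modifications'' to the $n$-junction case, and your outline spells out exactly those modifications (replacing $3$ by $n$, $T_{\alpha\beta\gamma}$ by $P_{\alpha_1\cdots\alpha_n}$, and $h=\tfrac12$ by $h=\cos(\pi/n)$) in both the upper and lower bound. The orientation/sign bookkeeping you flag is indeed the only point requiring a moment's thought, and your justification via the counter-clockwise cyclic ordering of the $\alpha_i$ on $\Suno$ is the correct one.
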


\subsection*{Acknowledgements}
We acknowledge the financial support of the GNAMPA of INdAM (Italian institute of high mathematics).


\end{document}